\newcommand{\bC}{\mathbb{C}}
\newcommand{\bN}{\mathbb{N}}
\newcommand{\bP}{\mathbb{P}}
\newcommand{\bR}{\mathbb{R}}
\newcommand{\bZ}{\mathbb{Z}}
\newcommand{\ind}{\!\!\uparrow}
\newcommand{\res}{\!\!\downarrow}
\DeclareMathOperator{\Ch}{Ch}
\DeclareMathOperator{\diag}{diag}
\DeclareMathOperator{\fin}{fin}
\DeclareMathOperator{\maj}{maj}
\DeclareMathOperator{\op}{op}
\DeclareMathOperator{\SYT}{SYT}
\theoremstyle{plain}
\newtheorem{theorem}{Theorem}
\newtheorem{lemma}{Lemma}
\newtheorem{proposition}{Proposition}
\newtheorem{corollary}{Corollary}
\newtheorem{conjecture}{Conjecture}
\newtheorem{question}{Question}
\theoremstyle{definition}
\newtheorem{definition}{definition}
\newtheorem{example}{Example}
\newtheorem{remark}{Remark}
\begin{document}

\title[Tableaux with Modular Major Index]{On the Existence of Tableaux with\\Given Modular Major Index}
\author{Joshua P. Swanson}
\address{Department of Mathematics, University of Washington,
Seattle, WA 98195, USA}
\urladdr{http://www.math.washington.edu/~jps314/}
\date{\today}
\keywords{standard Young tableaux, symmetric group characters, major index, hook length formula, rectangular partitions}

\begin{abstract}
  We provide simple necessary and sufficient conditions for the
  existence of a standard Young tableau of a given shape and major index $r$
  mod $n$, for all $r$. Our result generalizes the $r=1$ case due essentially to 
  Klyachko \cite{klyachko74} and proves a recent conjecture due to Sundaram
  \cite{sundaram16} for the $r=0$ case. A byproduct of the proof is an asymptotic 
  equidistribution result for ``almost all'' shapes. The proof uses a representation-theoretic
  formula involving Ramanujan sums and normalized symmetric group character estimates.
  Further estimates involving ``opposite'' hook lengths are given which are well-adapted to
  classifying which partitions $\lambda \vdash n$ have $f^\lambda \leq n^d$ for fixed $d$.
  We also give a new proof of a generalization of the hook length formula due to
  Fomin-Lulov \cite{fl95} for symmetric group characters at rectangles. We conclude with some
  remarks on unimodality of symmetric group characters.
\end{abstract}

\maketitle

\section{Introduction}
\label{sec:intro}

We assume basic familiarity with the combinatorics of Young tableaux and
the representation theory of the symmetric group. For further information and
definitions, see \cite{fulton97}, \cite{ec2}, or \cite{sagan01}.

Let $\lambda \vdash n$ be an integer partition of size $n$, and let $\SYT(\lambda)$ denote
the set of standard Young tableaux of shape $\lambda$. We write $\lambda'$ for the
transpose (or conjugate) of $\lambda$. Let $\maj T$ denote the major index
of $T \in \SYT(\lambda)$. We are chiefly interested in the counts
  \[ a_{\lambda, r} \coloneqq \#\{T \in \SYT(\lambda) : \maj T \equiv_n r\} \]
where $r$ is taken mod $n$. To avoid giving undue weight to trivial cases, we take $n \geq 1$
throughout. Work due to Klyachko and, later, Kra\'skiewicz--Weyman, gives the following.

\begin{theorem}[{\cite[Proposition 2]{klyachko74}}, {\cite{kw01}}]
  \label{thm:kly}
  Let $\lambda \vdash n$ and $n \geq 1$. The constant $a_{\lambda, 1}$ is positive except in
  the following cases, when it is zero:
  \begin{itemize}
    \item $\lambda = (2, 2)$ or $\lambda = (2, 2, 2)$;
    \item $\lambda = (n)$ when $n > 1$; or $\lambda = (1^n)$ when $n > 2$.
  \end{itemize}
\end{theorem}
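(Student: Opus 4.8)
The plan is to translate the question into representation theory and reduce positivity of $a_{\lambda,1}$ to a single inequality comparing $f^\lambda$ with a sum of absolute character values. By the cited theorem of Kra\'skiewicz--Weyman, $a_{\lambda,1}$ is the multiplicity of the Specht module $S^\lambda$ in $\operatorname{Ind}_{C_n}^{S_n}\zeta$, where $C_n = \langle(1\,2\,\cdots\,n)\rangle$ and $\zeta$ is a faithful linear character of $C_n$; equivalently, by Klyachko, the multiplicity of $S^\lambda$ in the Lie module $\operatorname{Lie}(n)$. Frobenius reciprocity gives $a_{\lambda,1} = \frac1n\sum_{j=0}^{n-1}\overline{\zeta(c^j)}\,\chi^\lambda(c^j)$, and grouping the powers $c^j$ by $d = \gcd(j,n)$ turns each inner sum into a Ramanujan sum equal to $\mu(n/d)$. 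Hence
\[ a_{\lambda,1} \;=\; \frac1n\sum_{d\mid n}\mu\!\left(\tfrac nd\right)\chi^\lambda\!\big((n/d)^{d}\big) \;=\; \frac1n\sum_{k\mid n}\mu(k)\,\chi^\lambda\!\big(k^{n/k}\big), \]
the $k=1$ term contributing the main term $f^\lambda/n$. In particular $a_{\lambda,1}>0$ as soon as $f^\lambda > \sum_{1<k\mid n}\big|\chi^\lambda(k^{n/k})\big|$.

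To control the error terms I would invoke the iterated Murnaghan--Nakayama rule: $\chi^\lambda(k^{n/k})$ vanishes unless $\lambda$ has empty $k$-core, and when it does not, $\big|\chi^\lambda(k^{n/k})\big| = \binom{n/k}{m_0,\dots,m_{k-1}}\prod_i f^{\lambda^{(i)}}$, where $(\lambda^{(0)},\dots,\lambda^{(k-1)})$ is the $k$-quotient of $\lambda$ and $m_i = |\lambda^{(i)}|$. Combining this with the hook length formula and the standard fact that, when the $k$-core is empty, the hook lengths of $\lambda$ divisible by $k$ are precisely $k$ times the hook lengths of the $\lambda^{(i)}$, one obtains the clean identity
\[ \frac{f^\lambda}{\big|\chi^\lambda(k^{n/k})\big|} \;=\; \frac{n!}{k^{n/k}\,(n/k)!\,\prod_{c\,:\,k\,\nmid\,h_\lambda(c)}h_\lambda(c)}. \]
Since $n!/(k^{n/k}(n/k)!)$ equals $((k-1)!)^{n/k}$ times the number of partitions of an $n$-set into blocks of size $k$, the right-hand side is enormous unless $\lambda$ is very ``thin'', so that already the dominant error term $k=p$ (with $p$ the least prime dividing $n$) is overwhelmed by $f^\lambda$ for all but a short list of shapes near a single row or a single column.

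Making that last step precise is the main obstacle. I would bound $\prod_{c\,:\,k\,\nmid\,h_\lambda(c)}h_\lambda(c)$ by a product of the largest admissible hook lengths avoiding multiples of $k$ and weigh it against $n!/(k^{n/k}(n/k)!)$; a monotonicity argument in the parts of $\lambda$ should then confine the possible failures of $f^\lambda > \sum_{1<k\mid n}|\chi^\lambda(k^{n/k})|$ to $\lambda = (n)$, $\lambda = (1^n)$, and finitely many small partitions (such as $(2,2)$, $(3,3)$, $(2,1,1)$ and their transposes) for which the single-prime estimate only gives a tie. For those remaining cases I would return to the exact alternating sum above: for $\lambda = (n)$ the unique tableau has $\maj = 0$; for $\lambda = (1^n)$ it has $\maj = \binom n2$, which is $\equiv 1 \pmod n$ only when $n \le 2$; for $\lambda = (2,2)$ and $(2,2,2)$ one checks directly (e.g.\ from $\chi^{(2,2)}(2,2) = f^{(2,2)} = 2$ together with $\mu(2)=-1$, and the conjugate-twisted analogue for $(2,2,2)$) that $a_{\lambda,1}=0$; while for every other borderline shape (for instance $(3,3)$, where the sum equals $\tfrac16(5+3-2+0)=1$) the alternating sum is positive, or equivalently one exhibits a tableau of major index $\equiv 1 \pmod n$. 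This exhausts all cases and yields exactly the stated list of exceptions.
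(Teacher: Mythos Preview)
Your overall architecture is exactly the paper's: the same Ramanujan-sum character formula (Theorem~\ref{thm:formula}, which at $r=1$ reduces to your M\"obius-weighted sum), the same strategy of isolating the main term $f^\lambda/n$ and bounding the remaining character values, and a finite residual list to be checked by hand. Your identity
\[
\frac{f^\lambda}{|\chi^\lambda(k^{n/k})|}
  = \frac{n!}{k^{n/k}(n/k)!\prod_{c:\,k\nmid h_c} h_c}
\]
is correct and is essentially the generalized hook length formula (Theorem~\ref{thm:hook}) divided into the ordinary one; the paper uses the closely related Fomin--Lulov inequality (Theorem~\ref{thm:fl_bound}) in its place.

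The genuine gap is the step you yourself flag as ``the main obstacle.'' Saying that a monotonicity argument in the parts of $\lambda$ ``should'' confine the failures to thin shapes is not a proof: one needs a uniform lower bound on $f^\lambda$ (equivalently an upper bound on the non-$k$-divisible hook product) that is strong enough to beat the sum over all $k\mid n$, $k>1$, and one needs a clean classification of the shapes where that bound fails. The paper supplies both missing ingredients explicitly: Stirling combined with Fomin--Lulov gives Lemma~\ref{lem:FL_bound} and hence the criterion $f^\lambda\ge n^3\Rightarrow a_{\lambda,r}>0$ (Corollary~\ref{cor:n_cubed}); and the ``opposite hook length'' inequality (Proposition~\ref{prop:h_op1} and Corollary~\ref{cor:h_op}) yields a sharp polynomial lower bound $f^\lambda\ge\tfrac{1}{M+1}\binom{n}{M}$ that reduces the remaining cases to shapes with $\lambda_1$ or $\lambda_1'$ close to $n$, together with a finite computer check for small $n$. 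Without something playing the role of these two tools, your outline does not close; once you have them, your proposal and the paper's proof coincide.
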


Indeed, the counts $a_{\lambda, r}$ can be interpreted as irreducible multiplicities as
follows, a result originally due to Kra\'skiewicz--Weyman. Let $C_n$ be the cyclic group of
order $n$ generated by the long cycle $\sigma_n \coloneqq (1 2 \cdots n) \in S_n$, let
$S^\lambda$ be the
Specht module of shape $\lambda \vdash n$, and let $\chi^r \colon C_n \to \bC^\times$ be
the irreducible representation given by $\chi^r(\sigma_n^i) \coloneqq \omega_n^{ri}$ where
$\omega_n$ is a fixed primitive $n$th root of unity and $r \in \bZ/n$. Let $\langle -, -\rangle$
denote the standard scalar product for complex representations.

\begin{theorem}[see {\cite[Theorem~1]{kw01}}]
  \label{thm:mults}
  With the above notation, we have
    \[ \langle S^\lambda, \chi^r\ind_{C_n}^{S_n} \rangle
        = a_{\lambda, r} = \langle \chi^r, S^\lambda\res_{C_n}^{S_n} \rangle. \]
  Moreover, $a_{\lambda, r}$ depends only on $\lambda$ and $\gcd(n, r)$.
\end{theorem}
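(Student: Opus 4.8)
The plan is to reduce everything to evaluating $\chi^\lambda$, the character of the Specht module $S^\lambda$, at powers of the long cycle $\sigma_n$, and to matching this with a root-of-unity specialization of the $q$-analogue of the hook length formula. \textbf{The outer equalities.} By Frobenius reciprocity $\langle S^\lambda, \chi^r\ind_{C_n}^{S_n}\rangle = \langle S^\lambda\res_{C_n}^{S_n}, \chi^r\rangle$; and since both $\langle S^\lambda\res_{C_n}^{S_n}, \chi^r\rangle$ and $\langle\chi^r, S^\lambda\res_{C_n}^{S_n}\rangle$ equal the (non-negative integer) multiplicity of $\chi^r$ in $S^\lambda\res_{C_n}^{S_n}$, the Hermitian product is symmetric here. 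So it suffices to prove the middle equality $a_{\lambda,r} = \langle\chi^r, S^\lambda\res_{C_n}^{S_n}\rangle$ and the final sentence.

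\textbf{Reduction to a root-of-unity identity.} Expanding the scalar product over $C_n$ (using that $\chi^\lambda$ is real-valued, so the convention is immaterial),
\[ \langle\chi^r, S^\lambda\res_{C_n}^{S_n}\rangle = \frac1n\sum_{i=0}^{n-1}\omega_n^{-ri}\,\chi^\lambda(\sigma_n^i). \]
On the combinatorial side, Stanley's $q$-hook length formula gives
\[ f^\lambda(q) \coloneqq \sum_{T\in\SYT(\lambda)} q^{\maj T} = q^{\kappa(\lambda)}\,\frac{[n]_q!}{\prod_{c\in\lambda}[h(c)]_q}, \qquad \kappa(\lambda)\coloneqq\sum_{i\geq1}(i-1)\lambda_i, \]
with $[m]_q \coloneqq 1+q+\cdots+q^{m-1}$, $[n]_q! \coloneqq \prod_{m=1}^n[m]_q$, and $h(c)$ the hook length of $c$. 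A roots-of-unity filter on $f^\lambda$ yields $a_{\lambda,r} = \sum_{k\,\equiv_n\,r}[q^k]f^\lambda(q) = \frac1n\sum_{i=0}^{n-1}\omega_n^{-ri}f^\lambda(\omega_n^i)$. Comparing the two displays, the middle equality reduces to the identity $f^\lambda(\omega_n^i) = \chi^\lambda(\sigma_n^i)$ for $0\le i<n$; equivalently, $f^\lambda(\zeta) = \chi^\lambda_{(k^d)}$ whenever $\zeta$ is a primitive $k$-th root of unity and $kd=n$, since $\sigma_n^i$ has cycle type $(k^d)$ with $k = n/\gcd(n,i)$ and $\omega_n^i$ is a primitive $k$-th root of unity.

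\textbf{Proving the root-of-unity identity (the crux).} I would establish $f^\lambda(\zeta) = \chi^\lambda_{(k^d)}$ by evaluating both sides as the same signed border-strip count. Iterating the Murnaghan--Nakayama rule for the class $(k^d)$ writes $\chi^\lambda_{(k^d)}$ as a sum of $(-1)^{\operatorname{ht}}$ over chains $\emptyset = S_0 \subset \cdots \subset S_d = \lambda$ whose successive skew shapes are border strips of size $k$; this vanishes unless $\lambda$ has empty $k$-core, and otherwise equals $\pm\binom{d}{|\lambda^{(0)}|,\ldots,|\lambda^{(k-1)}|}\prod_j f^{\lambda^{(j)}}$ in terms of the $k$-quotient $\lambda^{(0)},\ldots,\lambda^{(k-1)}$. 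For $f^\lambda(\zeta)$ one tracks orders of vanishing at $q=\zeta$: the factor $[m]_q$ of $[n]_q!$ vanishes at $\zeta$ iff $k\mid m$, and $[h(c)]_q$ vanishes iff $k\mid h(c)$; James' description of the cells with hook length divisible by $k$ via the $k$-quotient shows the numerator and denominator zeros cancel exactly when $\lambda$ has empty $k$-core, and a cyclotomic/L'H\^opital computation then identifies the limit with the same expression. This is the main obstacle; the delicate points are carrying the power $q^{\kappa(\lambda)}$ and the signs $(-1)^{\operatorname{ht}}$ correctly through the limit. This identity is essentially the Kra\'skiewicz--Weyman result and is closely tied to the Fomin--Lulov rectangular hook length formula discussed later in the paper.

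\textbf{The final sentence.} Since $\chi^\lambda(\sigma_n^i)$ depends only on the cycle type of $\sigma_n^i$, hence only on $e\coloneqq\gcd(n,i)$, write $\psi_\lambda(e)$ for that value and group the sum for $a_{\lambda,r}$ by $e$:
\[ a_{\lambda,r} = \frac1n\sum_{e\mid n}\psi_\lambda(e)\sum_{\substack{0\le i<n\\\gcd(n,i)=e}}\omega_n^{-ri} = \frac1n\sum_{e\mid n}\psi_\lambda(e)\,c_{n/e}(r), \]
where $c_m(r)\coloneqq\sum_{1\le j\le m,\,\gcd(j,m)=1}\omega_m^{rj}$ is the Ramanujan sum. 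By H\"older's identity $c_m(r) = \mu(m/g)\,\varphi(m)/\varphi(m/g)$ with $g=\gcd(m,r)$, so $c_{n/e}(r)$ depends only on $n/e$ and $\gcd(n/e,r)$; and a prime-by-prime valuation check shows $\gcd(n/e,r)$ is determined by $e$ and $\gcd(n,r)$. Hence every summand, and therefore $a_{\lambda,r}$, depends only on $\lambda$ and $\gcd(n,r)$.
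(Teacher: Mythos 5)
The paper does not itself prove Theorem~\ref{thm:mults}; it cites Kra\'skiewicz--Weyman for the first equality, invokes Frobenius reciprocity for the second, and defers the $\gcd$-dependence to ``a variety of methods.'' Your proposal supplies an actual argument, and the skeleton is the standard one. The Frobenius step is fine, and your reduction of the middle equality to the root-of-unity identity $f^\lambda(\omega_n^i) = \chi^\lambda(\sigma_n^i)$ by a roots-of-unity filter is exactly right; that identity is the content of the paper's later Theorem~\ref{thm:cyclic_exps}, so your direction of derivation is the reverse of the paper's (the paper states the cyclic-exponent theorem as a \emph{consequence} of Theorem~\ref{thm:mults} rather than a lemma for it). Your Ramanujan-sum argument for the $\gcd$-dependence is correct and is nicer than a hand-wave: the key point is $\gcd(n/e, r) = \gcd\bigl(n/e, \gcd(n, r)\bigr)$, so each summand $\psi_\lambda(e)\,c_{n/e}(r)$ depends on $r$ only through $\gcd(n,r)$.

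The genuine gap is exactly the one you flag. The cyclotomic/L'H\^opital computation of the kind the paper actually carries out in Section~\ref{sec:hook} determines only $|f^\lambda(\zeta)|$ --- Theorem~\ref{thm:hook} is stated with an absolute value, and the paper explicitly relegates the sign to the abacus argument of \cite[2.7.23]{jk81}. For your filter argument the sign is essential: $|f^\lambda(\omega_n^i)| = |\chi^\lambda(\sigma_n^i)|$ is not enough to conclude $\frac1n\sum_i\omega_n^{-ri}f^\lambda(\omega_n^i) = \frac1n\sum_i\omega_n^{-ri}\chi^\lambda(\sigma_n^i)$, since the filter is a signed/phased sum over all $i$. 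You are upfront that the signs and the $q^{\kappa(\lambda)}$ prefactor are the delicate point and that the identity ``is essentially the Kra\'skiewicz--Weyman result,'' which is honest --- but it means that, as written, the proposal still rests on citing the same hard ingredient the paper does rather than proving it. To close the gap you would need to carry the phase of the limit in \eqref{eq:rect_char} explicitly and match it to the Murnaghan--Nakayama sign (constant for the class $(k^d)$ by \cite[2.7.26]{jk81}), or appeal directly to Springer's regular-element theory or to D\'esarm\'enien's signed specialization.
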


\begin{remark}
  Kra\'skiewicz-Weyman gave the first equality in Theorem \ref{thm:mults}, and the second
  follows by Frobenius reciprocity. Klyachko \cite[Proposition 2]{klyachko74} actually
  determined which $S^\lambda$ contain faithful representations of $C_n$ in agreement with
  Theorem \ref{thm:kly}. One may see through a variety of methods that
  $\chi^r\ind_{C_n}^{S_n}$ depends up to isomorphism only on $\gcd(r, n)$.
  
  The manuscript \cite{kw01} was long-unpublished, the delay being largely due to Klyachko
  having already given a significantly more direct proof of their main application, relating
  $\chi^1\ind_{C_n}^{S_n}$ to free Lie algebras, though we have no need of this connection.
  For a more modern and unified account of these results, see
  \cite[Theorems~8.8-8.12]{reutenauer93}.
\end{remark}

The following recent conjecture due to Sundaram was originally stated in terms of the
multiplicity of $S^\lambda$ in $1\ind_{C_n}^{S_n}$.

\begin{conjecture}{\cite{sundaram16}}.
  \label{conj:sundaram}
  Let $\lambda \vdash n$ and $n \geq 1$. Then $a_{\lambda, 0}$ is positive except in
  the following cases, when it is zero: $n > 1$ and
  \begin{itemize}
    \item $\lambda = (n-1, 1)$
    \item $\lambda = (2, 1^{n-2})$ when $n$ is odd
    \item $\lambda = (1^n)$ when $n$ is even.
  \end{itemize}
\end{conjecture}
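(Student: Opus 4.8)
The plan is to evaluate $a_{\lambda,0}$ via Theorem~\ref{thm:mults} and to extract a dominant term. Since $a_{\lambda,0}$ is an irreducible multiplicity it is a nonnegative integer, so it suffices to show it is nonzero. By the second equality in Theorem~\ref{thm:mults} with $r=0$, $a_{\lambda,0}=\langle\chi^0,S^\lambda\res_{C_n}^{S_n}\rangle=\frac{1}{n}\sum_{i=0}^{n-1}\chi^\lambda(\sigma_n^i)$, the average of the character $\chi^\lambda$ of $S^\lambda$ over $C_n$. As $\sigma_n^i$ has cycle type $((n/d)^d)$ with $d=\gcd(n,i)$, and exactly $\phi(n/d)$ indices $i\in\{0,\dots,n-1\}$ have $\gcd(n,i)=d$, grouping by $d$ gives
\[
  a_{\lambda,0}=\frac{1}{n}\sum_{d\mid n}\phi(n/d)\,\chi^\lambda\!\left((n/d)^d\right)
\]
(the $r=0$ instance of a general formula whose coefficients are Ramanujan sums $c_{n/d}(r)$, with $c_{n/d}(0)=\phi(n/d)$). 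The $d=n$ term contributes the ``main term'' $f^\lambda/n>0$, so it is enough to prove
\begin{equation*}
  f^\lambda \;>\; \sum_{\substack{d\mid n\\ d<n}}\phi(n/d)\,\bigl|\chi^\lambda\!\left((n/d)^d\right)\bigr|
  \tag{$\star$}
\end{equation*}
for $\lambda$ not in the list in Conjecture~\ref{conj:sundaram}, together with a short direct check that $a_{\lambda,0}=0$ for the three exceptional families (all of them hooks, near-hooks, or conjugates of such).

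To bound the right side of $(\star)$ I would invoke the Murnaghan--Nakayama rule, which forces $\chi^\lambda((n/d)^d)=0$ unless $\lambda$ has empty $(n/d)$-core, together with the Fomin--Lulov formula (for which we give a new proof below): when the $(n/d)$-core is empty, $|\chi^\lambda((n/d)^d)|$ is expressed through the hook lengths of $\lambda$ divisible by $n/d$, equivalently through standard tableaux of the $(n/d)$-quotient, and in particular is at most the number of standard multitableaux of that quotient. Combined with the hook length formula $f^\lambda=n!/\prod_c h(c)$, this converts $(\star)$ into a combinatorial comparison between $\prod_c h(c)$ and the partial products $\prod_{(n/d)\mid h(c)}h(c)$.

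The heart of the matter is to show that $(\star)$ holds automatically once $f^\lambda$ exceeds a suitable fixed power $n^{D}$: the number of divisors of $n$ is $n^{o(1)}$, one has $\phi(n/d)\le n/d$, and the Fomin--Lulov description bounds each normalized value $|\chi^\lambda((n/d)^d)|/f^\lambda$ by a quantity controlled through hook-length comparisons. To organize these comparisons I would introduce estimates based on ``opposite'' hook lengths; as a by-product of independent interest these yield a classification showing that, for each fixed $D$, the shapes with $f^\lambda\le n^{D}$ form --- up to conjugation --- finitely many explicit families of ``thin'' shapes (hooks together with a bounded number of near-hook families). For every $\lambda$ outside this list, $(\star)$ holds, hence $a_{\lambda,0}>0$; the same estimates give $a_{\lambda,r}=f^\lambda/n+o(f^\lambda/n)$ for all $r$, which is the advertised asymptotic equidistribution.

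It then remains to handle finitely many small $n$ (by direct computation) and the thin families. For a thin $\lambda$ the value $\chi^\lambda((n/d)^d)$ is highly constrained --- it is $0$ or $\pm1$ when $\lambda$ is a hook and a low-degree quantity for a near-hook --- so the formula for $a_{\lambda,0}$ collapses to a short $\phi$-weighted character sum whose vanishing is a congruence condition on $n$. Evaluating this family by family isolates exactly $\lambda=(n-1,1)$, $\lambda=(2,1^{n-2})$ with $n$ odd, and $\lambda=(1^n)$ with $n$ even as the cases of $a_{\lambda,0}=0$, and confirms positivity for every other thin shape. I expect the main obstacle to be the calibration in the previous paragraph: choosing $D$, and the accompanying character estimates, so that the ``large $f^\lambda$'' regime and the explicitly checked ``thin'' regime overlap with no gap. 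The tightest constraints should come from divisors $d$ just below $n$ --- cycle types $(p^{n/p})$ with $p$ the least prime factor of $n$ --- where $|\chi^\lambda((n/d)^d)|$ can be largest relative to $f^\lambda$.
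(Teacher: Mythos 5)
Your plan is correct and matches the paper's proof of Theorem~\ref{thm:main} (of which Conjecture~\ref{conj:sundaram} is the $r=0$ case) essentially step for step: the Ramanujan-sum character formula (Theorem~\ref{thm:formula}), the Fomin--Lulov bound with Stirling to show $f^\lambda \geq n^D$ suffices (Corollary~\ref{cor:n_cubed} with $D=3$), the opposite-hook-length comparison (Proposition~\ref{prop:h_op1} and Corollary~\ref{cor:h_op}) to classify the thin shapes with $f^\lambda < n^D$, and finally direct verification for small $n$ and the finitely many near-hook families. The only cosmetic difference is that for $r=0$ you obtain the formula by directly averaging $\chi^\lambda$ over $C_n$ and grouping indices by $\gcd$, whereas the paper inverts the Ramanujan-sum matrix to handle all $r$ at once; both produce the same expression, and you even correctly identify the tightest constraint as the small-$\ell$ (i.e.~large $d=n/\ell$) terms and the need to calibrate $D$ so the two regimes overlap, which is exactly what the paper does with $D=3$ and the cutoff $n\geq 34$.
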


\noindent Conjecture \ref{conj:sundaram} is the $r=0$ case of the following theorem, which
is our main result.

\begin{theorem}
  \label{thm:main}
  Let $\lambda \vdash n$ and $n \geq 1$. Then $a_{\lambda, r}$ is positive except in
  the following cases, when it is zero: $n > 1$ and
  \begin{itemize}
    \item $\lambda = (2, 2)$, $r = 1, 3$; or $\lambda = (2, 2, 2)$, $r = 1, 5$;
      or $\lambda = (3, 3)$, $r = 2, 4$;
    \item $\lambda = (n-1, 1)$ and $r=0$;
    \item $\lambda = (2, 1^{n-2})$,
      $r = \begin{cases}
                0 & \text{if $n$ is odd} \\
                \frac{n}{2} & \text{if $n$ is even};
              \end{cases}$
    \item $\lambda = (n)$, $r \in \{1, \ldots, n-1\}$;
    \item $\lambda = (1^n)$,
      $r \in \begin{cases}
                  \{1, \ldots, n-1\} & \text{if $n$ is odd} \\
                  \{0, \ldots, n-1\} - \{\frac{n}{2}\} & \text{if $n$ is even}.
                \end{cases}$
  \end{itemize}
  Equivalently, using Theorem \ref{thm:mults}, every irreducible representation appears in each
  $\chi^r\ind_{C_n}^{S_n}$ or $S^\lambda\res_{C_n}^{S_n}$ except in the noted exceptional
  cases.
\end{theorem}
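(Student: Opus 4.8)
The plan is to extract $a_{\lambda,r}$ from a generating function by discrete Fourier inversion on $\bZ/n$, reducing everything to estimates on symmetric group characters at rectangular cycle types, and then to split into a \emph{generic} case where a single main term dominates and a finite list of \emph{small} shapes handled by hand. To begin, set $f^\lambda(q)\coloneqq\sum_{T\in\SYT(\lambda)}q^{\maj T}$, so that $a_{\lambda,r}=\frac1n\sum_{j=0}^{n-1}\omega_n^{-jr}f^\lambda(\omega_n^j)$. By Stanley's $q$-analogue of the hook length formula together with the Murnaghan--Nakayama rule, the value of $f^\lambda$ at a primitive $d$-th root of unity (for $d\mid n$) equals $\chi^\lambda(\rho_d)$, where $\rho_d\in S_n$ has cycle type $(d^{n/d})$, this value being $0$ unless the $d$-core of $\lambda$ is empty. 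Grouping the terms of the sum over $j$ by $d=n/\gcd(n,j)$ then yields
\[ a_{\lambda,r}=\frac1n\sum_{d\mid n}c_d(r)\,\chi^\lambda(\rho_d),\qquad c_d(r)=\!\!\sum_{\substack{1\le k\le d\\\gcd(k,d)=1}}\!\!\omega_d^{kr}, \]
where $c_d(r)=\mu\!\left(\tfrac{d}{\gcd(d,r)}\right)\phi(d)/\phi(d/\gcd(d,r))$ is a Ramanujan sum; this makes the $\gcd$-dependence of Theorem~\ref{thm:mults} transparent and isolates the $d=1$ summand $\tfrac1n f^\lambda$ as a main term. The formula also disposes of the two ``extremal'' families directly: for $\lambda=(n)$ one has $\chi^\lambda\equiv1$, so $a_{(n),r}=[n\mid r]$, while for $\lambda=(1^n)$ one has $\chi^\lambda(\rho_d)=\operatorname{sgn}(\rho_d)=(-1)^{n-n/d}$, and an elementary Ramanujan-sum identity produces the values matching the last bullet. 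The transpose identity $a_{\lambda',r}=a_{\lambda,\binom n2-r}$, which follows from $\maj T'=\binom n2-\maj T$, will further cut the remaining work in half.

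Next comes the generic case, where $f^\lambda$ is large. Bounding $|c_d(r)|\le\phi(d)\le d$ and invoking the Fomin--Lulov estimate $|\chi^\lambda(\rho_d)|\le\frac{(n/d)!\,d^{n/d}}{(n!)^{1/d}}(f^\lambda)^{1/d}$ -- whose coefficient is, by Stirling, at most $n^{O(1)}$, so that $|\chi^\lambda(\rho_d)|\le n^{O(1)}(f^\lambda)^{1/2}$ for all $d\ge2$ -- gives
\[ a_{\lambda,r}\ \ge\ \frac1n\Bigl(f^\lambda-\!\!\sum_{\substack{d\mid n\\d>1}}\!\! d\,|\chi^\lambda(\rho_d)|\Bigr)\ \ge\ \frac1n\bigl(f^\lambda-n^{O(1)}(f^\lambda)^{1/2}\bigr)\ >\ 0 \]
as soon as $f^\lambda$ exceeds a suitable fixed power $n^{d_0}$. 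The same inequality shows $|a_{\lambda,r}-f^\lambda/n|\le n^{O(1)}(f^\lambda)^{1/2}$ uniformly in $r$, i.e.\ the asymptotic equidistribution for ``almost all'' shapes; this is also the natural place for a self-contained proof of the Fomin--Lulov rectangle formula.

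It then remains to treat the partitions with $f^\lambda\le n^{d_0}$. Since $f^\lambda=n!/\prod_c h(c)$, these are exactly the $\lambda$ for which $\prod_c h(c)$ is nearly $n!$; working with the ``opposite'' hook lengths of cells near the corner gives clean monotonicity and classifies such $\lambda$ into finitely many explicit families -- the row $(n)$, the column $(1^n)$, the hooks $(n-k,1^k)$ and their transposes with $k$ bounded in terms of $d_0$, two-row and two-column near-rectangles, and a finite list of genuinely small partitions. For each family one evaluates $a_{\lambda,r}$ from the formula above (with only a handful of divisors $d$ and each $\chi^\lambda(\rho_d)=\pm\binom{n/d}{|\lambda^{(0)}|,\dots,|\lambda^{(d-1)}|}\prod_i f^{\lambda^{(i)}}$ explicit via $d$-cores and $d$-quotients), or from standard descent statistics, and checks it against the stated list; this is where $(2,2)$, $(2,2,2)$, $(3,3)$ and the hook families $(n-1,1)$ and $(2,1^{n-2})$ emerge as the only exceptions -- in particular settling Conjecture~\ref{conj:sundaram} at $r=0$. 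The main obstacle is quantitative: the Fomin--Lulov estimate and the Ramanujan weights (which reach $\phi(d)$ for the worst residues $r$) must combine to give a threshold $n^{d_0}$ small enough that the family classification is genuinely finite and each family short enough to verify by hand, and pinning this down -- together with the ensuing case analysis -- is the technical heart of the argument.
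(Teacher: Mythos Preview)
Your outline is correct and follows essentially the same route as the paper: derive the Ramanujan-sum character formula by Fourier inversion, feed in the Fomin--Lulov bound with Stirling to get positivity whenever $f^\lambda\ge n^{d_0}$ (the paper makes $d_0=3$ work via Lemma~\ref{lem:FL_bound} and Corollary~\ref{cor:n_cubed}), then use opposite hook lengths to classify the remaining shapes and finish by case analysis plus the transpose symmetry. The only substantive points you leave implicit are that the paper relies on a short computer check for $n\le33$ before the opposite-hook classification kicks in, and that the residual ``near-row'' families it isolates are $\lambda=(n-N)\oplus\mu$ with $|\mu|\le4$ (not just hooks and two-row shapes), of which all but seven already satisfy $f^\lambda\ge n^3$; the final seven are dispatched via the $q$-hook length formula rather than the quotient formula you suggest, but either works.
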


M.~Johnson \cite{MR2276964} gave an alternative proof of Klyachko's result, Theorem
\ref{thm:kly}, involving explicit constructions with standard tableaux.
Kov\'acs--St\"ohr \cite{MR2228926} gave a different proof using the Littlewood--Richardson
rule which also showed that $a_{\lambda, 1} > 1$ implies $a_{\lambda, 1}
\geq \frac{n}{6} - 1$. Our approach is instead based on normalized symmetric group character
estimates. It has the benefit of yielding both more general and vastly more precise
estimates for $a_{\lambda, r}$.

Our starting point is the following character formula. See
Section \ref{sec:formula} for further discussion of its origins and a generalization.
Let $\chi^\lambda(\mu)$ denote the character of $S^\lambda$ at a permutation of cycle type
$\mu$. We write $\ell^{n/\ell}$ for the rectangular partition $(\ell, \ldots, \ell)$ with $\ell$
columns and $n/\ell$ rows. Write $f^\lambda \coloneqq \chi^\lambda(1^n) =
\dim S^\lambda = \#\SYT(\lambda)$.

\begin{theorem}
  \label{thm:formula}
  Let $\lambda \vdash n$ and $n \geq 1$. For all $r \in \bZ/n$,
    \[ \frac{a_{\lambda,r}}{f^\lambda}
        = \frac{1}{n} + \frac{1}{n} \sum_{\substack{\ell \mid n \\ \ell \neq 1}}
           \frac{\chi^\lambda(\ell^{n/\ell})}{f^\lambda} c_\ell(r) \]
  where
    \[ c_\ell(r) \coloneqq \mu\left(\frac{\ell}{\gcd(\ell, r)}\right)
        \frac{\phi(\ell)}{\phi(\ell/\gcd(\ell, r))} \]
  is a Ramanujan sum, $\mu$ is the classical M\"obius function, and
  $\phi$ is Euler's totient function.
\end{theorem}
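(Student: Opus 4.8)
The plan is to compute $a_{\lambda,r}$ directly from the second equality in Theorem~\ref{thm:mults}, namely $a_{\lambda,r} = \langle \chi^r, S^\lambda\res_{C_n}^{S_n}\rangle$, by expanding the scalar product over $C_n$ via character orthogonality. Since $C_n$ is generated by $\sigma_n$, the character of $\chi^r$ at $\sigma_n^i$ is $\omega_n^{ri}$ and the character of $S^\lambda\res_{C_n}^{S_n}$ at $\sigma_n^i$ is $\chi^\lambda(\sigma_n^i)$; as $\chi^\lambda$ is real-valued this gives
\[ a_{\lambda,r} = \frac{1}{n}\sum_{i=0}^{n-1} \omega_n^{ri}\,\chi^\lambda(\sigma_n^i). \]
The key combinatorial input is that a power $\sigma_n^i$ of the long cycle has cycle type $(n/\gcd(n,i))^{\gcd(n,i)}$, so $\chi^\lambda(\sigma_n^i)$ depends on $i$ only through $d \coloneqq \gcd(n,i)$.

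Next I would partition the index set $\{0,1,\dots,n-1\}$ according to the value of $d \mid n$: the fiber over $d$ is exactly $\{dj : 0 \le j < n/d,\ \gcd(j,n/d)=1\}$, which has $\phi(n/d)$ elements. Collecting terms and using $\omega_n^{d} = \omega_{n/d}$ yields
\[ a_{\lambda,r} = \frac{1}{n}\sum_{d \mid n} \chi^\lambda\!\big((n/d)^{d}\big) \sum_{\substack{0 \le j < n/d \\ \gcd(j,n/d)=1}} \omega_{n/d}^{\,rj}. \]
Reindexing by $\ell \coloneqq n/d$, the inner sum is by definition the Ramanujan sum $c_\ell(r) = \sum_{1 \le a \le \ell,\ \gcd(a,\ell)=1}\omega_\ell^{\,ar}$. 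Pulling out the $\ell = 1$ term, where $\chi^\lambda(1^n) = f^\lambda$ and $c_1(r) = 1$, and dividing through by $f^\lambda$, gives precisely the asserted identity. Finally, to match the explicit expression for $c_\ell(r)$ displayed in the statement one invokes the classical Kluyver--H\"older evaluation $c_\ell(r) = \mu(\ell/\gcd(\ell,r))\,\phi(\ell)/\phi(\ell/\gcd(\ell,r))$, which I would cite rather than reprove.

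Every step here is essentially bookkeeping, so I do not anticipate a genuine obstacle. The only points requiring a little care are: verifying the cycle type of $\sigma_n^i$ and the description of the fibers of $i \mapsto \gcd(n,i)$; tracking the coprimality condition correctly through the reindexing $i \mapsto (d,j) \mapsto \ell = n/d$; and observing that because $\chi^\lambda$ is real (and $c_\ell$ is real and even) no complex-conjugation subtleties arise from the particular convention for $\langle -,-\rangle$. One may also note as a consistency check that setting $r = 0$ recovers $c_\ell(0) = \phi(\ell)$, so the formula collapses to the standard expansion of $\langle S^\lambda, 1\ind_{C_n}^{S_n}\rangle$ in terms of rectangular characters.
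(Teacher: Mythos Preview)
Your proof is correct, but it follows a different route from the paper's own argument. The paper proceeds by evaluating the modular generating function $P_{S^\lambda,(n)}(q)=\sum_T q^{\maj T}$ at $q=\omega_n^s$ for each $s\mid n$, obtaining the linear system $\chi^\lambda((n/s)^s)=\sum_{r\mid n} a_{\lambda,r}\,c_{n/r}(s)$, and then inverts this system using the Ramanujan-sum identity $(c_{n/r}(s))^2=nI$. You instead expand the inner product $\langle \chi^r, S^\lambda\res_{C_n}^{S_n}\rangle$ directly over $C_n$ and group by the cycle type of $\sigma_n^i$, which avoids both the major-index generating function and the matrix inversion entirely. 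Your argument is the more elementary of the two and is, in effect, the $H=C_n$, $M=\chi^r$ specialization of what the paper later packages as Theorem~\ref{thm:char_formulas}; the paper notes its own proof was chosen partly because the intermediate relation \eqref{eq:lin_comb1} is reused in Section~\ref{sec:hook} for the alternative hook-length argument. One small remark: in your inner-product step the sum should carry a conjugate on one factor, but as you observe this is harmless here since both $\chi^\lambda$ and $c_\ell$ are real (equivalently, $c_\ell(r)=c_\ell(-r)$), so the sign on the exponent is immaterial.
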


We estimate the quotients in the preceding formula using the following result due to Fomin and
Lulov.

\begin{theorem}{\cite[Theorem~1.1]{fl95}}
  \label{thm:fl_bound}
  Let $\lambda \vdash n$ where $n = \ell s$. Then
    \[ |\chi^\lambda(\ell^s)| \leq \frac{s! \ell^s}{(n!)^{1/\ell}} (f^\lambda)^{1/\ell}. \]
\end{theorem}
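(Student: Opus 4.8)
The plan is to combine the Murnaghan--Nakayama rule with the theory of $\ell$-cores and $\ell$-quotients, first bounding $|\chi^\lambda(\ell^s)|$ by a count of ribbon tableaux and then reducing the stated inequality to a purely combinatorial inequality among hook lengths; throughout, $h_\lambda(c)$ denotes the hook length of a cell $c \in \lambda$. First I would recall that iterating the Murnaghan--Nakayama rule writes $\chi^\lambda(\ell^s)$ as a signed sum over standard $\ell$-ribbon (rim-hook) tableaux of shape $\lambda$. If the $\ell$-core of $\lambda$ is nonempty there are no such tableaux, so $\chi^\lambda(\ell^s) = 0$ and the claim is immediate since its right-hand side is nonnegative. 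So assume the $\ell$-core of $\lambda$ is empty, and let $g_\ell(\lambda)$ denote the number of standard $\ell$-ribbon tableaux of shape $\lambda$; the triangle inequality gives $|\chi^\lambda(\ell^s)| \le g_\ell(\lambda)$, and it suffices to bound $g_\ell(\lambda)$ itself.

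Next I would choose an abacus display of $\lambda$ on $\ell$ runners with equally many beads on each runner---possible exactly because the $\ell$-core is empty---and let $(\lambda^{(0)}, \dots, \lambda^{(\ell-1)})$ be the associated $\ell$-quotient, with $s_j \coloneqq |\lambda^{(j)}|$, so that $\sum_j s_j = s$. Adjoining an $\ell$-ribbon to a diagram amounts to sliding one bead one step down its runner, that is, to adjoining a single cell to one of the $\lambda^{(j)}$; interleaving the $\ell$ growth sequences in all possible orders therefore gives $g_\ell(\lambda) = \binom{s}{s_0, \dots, s_{\ell-1}} \prod_j f^{\lambda^{(j)}}$. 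Applying the ordinary hook length formula to each $\lambda^{(j)}$, and using that the cells $c \in \lambda$ with $\ell \mid h_\lambda(c)$ correspond bijectively to the cells $c'$ of the $\lambda^{(j)}$ via $h_\lambda(c) = \ell\, h_{\lambda^{(j)}}(c')$---so that $\prod_{c \in \lambda :\, \ell \mid h_\lambda(c)} h_\lambda(c) = \ell^s \prod_j \prod_{c'} h_{\lambda^{(j)}}(c')$---the above collapses to
  \[ g_\ell(\lambda) = \frac{s!\, \ell^s}{\prod_{c \in \lambda :\, \ell \mid h_\lambda(c)} h_\lambda(c)}, \]
a ``generalized hook length formula'' that reduces to the ordinary one when $\ell = 1$. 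Combined with $f^\lambda = n! / \prod_{c \in \lambda} h_\lambda(c)$, the target inequality $g_\ell(\lambda) \le s!\, \ell^s (f^\lambda)^{1/\ell} / (n!)^{1/\ell}$ becomes equivalent to the hook-length inequality
  \[ \prod_{c \in \lambda} h_\lambda(c) \;\le\; \Bigl( \prod_{c \in \lambda :\, \ell \mid h_\lambda(c)} h_\lambda(c) \Bigr)^{\ell}. \]

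This last inequality is where I expect the real difficulty to lie, and I would again argue on the abacus. Regarding the chosen display as a beta-set $B \subseteq \{0, 1, \dots, N-1\}$, the cells of $\lambda$ correspond to pairs $(b, b')$ with $b \in B$, $b' \notin B$, $b > b'$, with hook length $b - b'$, and such a hook is divisible by $\ell$ precisely when $b$ and $b'$ lie on the same runner. Thus one must bound a product of ``cross-runner'' differences by a power of the product of ``same-runner'' differences. Writing $b - b'$ in terms of runner levels and residues and charging each cross-runner hook against same-runner hooks in a suitable grouping, the estimate should reduce to the multinomial inequality $(\ell a)! \le \ell^{\ell a} (a!)^\ell$ together with its shifted variants. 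The delicacy is that the hook-length inequality is tight in the balanced (square-like) case and already fails under any naive term-by-term comparison, so the bookkeeping---which cross-runner hook is charged against which same-runner hook, and keeping the orderings straight in the identity $\prod_{c \in \lambda} h_\lambda(c) = \prod_{b \in B,\, b' \notin B,\, b > b'} (b - b')$---must be handled with care.
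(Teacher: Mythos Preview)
The paper does not itself prove Theorem~\ref{thm:fl_bound}; it is quoted from Fomin--Lulov and used as a black box. What the paper \emph{does} prove independently is the exact formula Theorem~\ref{thm:hook}, which coincides with the ``generalized hook length formula'' you derive for $g_\ell(\lambda)$ (note $\prod_{i\in[n],\,\ell\mid i} i = s!\,\ell^s$). The paper also remarks that its Proposition~\ref{prop:h_op1} is ``similar in flavor to \cite[Theorem~2.3]{fl95}'', the latter being a hook-product inequality of exactly the type you isolate. So there is no in-paper proof to compare against; your outline in fact reconstructs Fomin--Lulov's own route: establish $|\chi^\lambda(\ell^s)| = g_\ell(\lambda) = s!\,\ell^s\big/\prod_{\ell\mid h_c} h_c$ (equality, not just $\leq$, since there is no sign cancellation at rectangular cycle types, cf.\ \cite[2.7.26]{jk81}), and then reduce the bound to
\[
  \prod_{c \in \lambda} h_c \;\le\; \Bigl(\prod_{\substack{c \in \lambda\\ \ell \mid h_c}} h_c\Bigr)^{\!\ell}.
\]

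The gap is that you have not proved this last inequality, and it is the entire content of the theorem once Theorem~\ref{thm:hook} is in hand. Your abacus paragraph is only a plan: ``charging each cross-runner hook against same-runner hooks in a suitable grouping'' and invoking $(\ell a)!\le \ell^{\ell a}(a!)^\ell$ is not yet an argument, and---as you yourself note---any naive term-by-term comparison fails because the inequality is tight in balanced cases. This step is precisely \cite[Theorem~2.3]{fl95}, and its proof there is not a routine regrouping. As written, your proposal stops exactly where the substantive difficulty begins; everything before that point is correct and matches the Fomin--Lulov strategy, but the proof is incomplete without a genuine argument for the hook-product inequality.
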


The character formula in Theorem \ref{thm:formula} and the Fomin-Lulov bound are combined
below to give the following asymptotic uniform distribution result.

\begin{theorem}
  \label{thm:asymptotic}
  For all $\lambda \vdash n \geq 1$ and all $r$,
  \begin{equation}
    \label{eq:asymptotic}
    \left|\frac{a_{\lambda,r}}{f^\lambda} - \frac{1}{n}\right|
        \leq \frac{2n^{3/2}}{\sqrt{f^\lambda}}.
  \end{equation}
\end{theorem}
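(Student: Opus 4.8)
The plan is to start from the exact character formula in Theorem \ref{thm:formula}, move the $1/n$ term to the left-hand side, apply the triangle inequality, and then bound each summand using the Fomin--Lulov estimate. Concretely, from Theorem \ref{thm:formula} we get
\[
  \left| \frac{a_{\lambda,r}}{f^\lambda} - \frac1n \right|
    \leq \frac1n \sum_{\substack{\ell \mid n \\ \ell \neq 1}}
      \frac{|\chi^\lambda(\ell^{n/\ell})|}{f^\lambda}\, |c_\ell(r)|,
\]
so the task reduces to (i) bounding the Ramanujan sum $c_\ell(r)$, and (ii) bounding the normalized character $|\chi^\lambda(\ell^{n/\ell})|/f^\lambda$, and then (iii) summing the resulting bound over divisors $\ell \mid n$ with $\ell \neq 1$.

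For step (i), I would observe that $c_\ell(r)$ is an integer and satisfies the crude bound $|c_\ell(r)| \leq \phi(\ell) \leq \ell$, since $|\mu(\ell/\gcd(\ell,r))| \leq 1$ and $\phi(\ell)/\phi(\ell/\gcd(\ell,r)) \leq \phi(\ell)$ (indeed $\phi$ of a divisor divides $\phi$, and in particular is $\geq 1$). Actually the sharper and cleaner bound is simply $|c_\ell(r)| \leq \phi(\ell) \le \ell - 1$ for $\ell > 1$, but $\ell$ suffices. For step (ii), apply Theorem \ref{thm:fl_bound} with $s = n/\ell$: it gives $|\chi^\lambda(\ell^{n/\ell})| \leq \dfrac{(n/\ell)!\,\ell^{n/\ell}}{(n!)^{1/\ell}} (f^\lambda)^{1/\ell}$, hence
\[
  \frac{|\chi^\lambda(\ell^{n/\ell})|}{f^\lambda}
    \leq \frac{(n/\ell)!\,\ell^{n/\ell}}{(n!)^{1/\ell}}\, (f^\lambda)^{1/\ell - 1}.
\]
Since $f^\lambda \geq 1$ and $1/\ell - 1 \le -1/2$ for $\ell \geq 2$, we have $(f^\lambda)^{1/\ell-1} \leq (f^\lambda)^{-1/2}$. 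It remains to check that the purely numerical factor $(n/\ell)!\,\ell^{n/\ell}/(n!)^{1/\ell}$ is at most $\sqrt{n}$ (say), uniformly over $\ell \mid n$, $\ell \geq 2$; this is an elementary estimate, most easily verified by taking logarithms and using Stirling, with the extreme cases $\ell = 2$ and $\ell = n$ being the ones to watch. Combining, each summand is at most $\sqrt{n}\cdot \ell \cdot (f^\lambda)^{-1/2}$, and summing over the at most $n$ divisors — or more carefully using $\sum_{\ell \mid n} \ell = \sigma(n)$ together with a bound like $\sigma(n) \le n^{3/2}$ or simply bounding the number of divisors crudely — yields the claimed $2n^{3/2}/\sqrt{f^\lambda}$ after the $1/n$ prefactor is reinstated.

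The main obstacle I anticipate is bookkeeping: getting the constant to come out as exactly $2$ (rather than some larger absolute constant) requires being somewhat careful about \emph{which} crude bounds one uses at each stage — for instance, whether to bound $|c_\ell(r)|$ by $\ell$ or by $\phi(\ell)$, whether to bound $\sum_{\ell \mid n,\ \ell\neq 1}\ell$ by $n^{3/2}$ directly or to pull out the dominant $\ell = n$ term and treat it separately, and whether the factor $(n/\ell)!\,\ell^{n/\ell}/(n!)^{1/\ell}$ should be bounded by $\sqrt n$ or something smaller. The $\ell = 2$ case tends to be the worst for the numerical factor while the $\ell = n$ case contributes the largest Ramanujan sum, so I would handle those two explicitly and bound the middle divisors generously. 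None of the individual estimates is deep; the content is entirely in the Fomin--Lulov input of Theorem \ref{thm:fl_bound}, and everything downstream is elementary analytic number theory and Stirling's formula.
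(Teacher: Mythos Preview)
Your proposal is correct and follows essentially the same route as the paper. The paper streamlines the constant-tracking you worry about: after applying Stirling to the Fomin--Lulov bound (packaged as Lemma~\ref{lem:FL_bound}), a single uniform term-by-term estimate gives $|\chi^\lambda(\ell^s)|/f^\lambda \le C\sqrt{n/f^\lambda}$ with the explicit constant $C = \sqrt{\pi}\,e^{1/12} < 2$ for every $\ell \ge 2$, and then the crudest bounds $|c_\ell(r)| \le n$ and $\#\{\ell \mid n : \ell \neq 1\} < n$ already produce the factor $2n^{3/2}$ without any of the case-splitting you anticipate.
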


In Section \ref{sec:thm_proofs} we use ``opposite hook lengths'' to give a lower bound for
$f^\lambda$, Corollary \ref{cor:h_op}. These bounds, together with a somewhat more careful
analysis involving the character formula, Stirling's approximation, and the Fomin-Lulov bound,
are used to deduce both our main result, Theorem \ref{thm:main}, and the following more
explicit uniform distribution result.

\begin{theorem}
  \label{thm:dist}
  Let $\lambda \vdash n$ be a partition where $f^\lambda \geq n^5 \geq 1$.
  Then for all $r$,
    \[ \left|\frac{a_{\lambda, r}}{f^\lambda} - \frac{1}{n}\right| < \frac{1}{n^2}. \]
  In particular, if $n \geq 81$, $\lambda_1 < n-7$, and $\lambda_1' < n-7$, then
  $f^\lambda \geq n^5$ and the inequality holds.
\end{theorem}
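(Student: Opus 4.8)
The plan is to feed the Fomin–Lulov bound (Theorem \ref{thm:fl_bound}) into the character formula (Theorem \ref{thm:formula}) more carefully than was done for Theorem \ref{thm:asymptotic}, and then to invoke a lower bound on $f^\lambda$ in terms of ``opposite hook lengths'' (Corollary \ref{cor:h_op}) to handle the final ``in particular'' clause. Starting from Theorem \ref{thm:formula}, the triangle inequality gives
\[
  \left|\frac{a_{\lambda,r}}{f^\lambda} - \frac{1}{n}\right|
    \leq \frac{1}{n} \sum_{\substack{\ell \mid n \\ \ell \neq 1}}
      \frac{|\chi^\lambda(\ell^{n/\ell})|}{f^\lambda}\, |c_\ell(r)|.
\]
Since $|c_\ell(r)| \leq \phi(\ell) \leq \ell$ for every $\ell$ and $r$, and by Theorem \ref{thm:fl_bound} with $s = n/\ell$ we have $|\chi^\lambda(\ell^{n/\ell})|/f^\lambda \leq (n/\ell)!\,\ell^{n/\ell} (f^\lambda)^{1/\ell - 1}/(n!)^{1/\ell} = (f^\lambda)^{1/\ell - 1} \binom{n}{n/\ell,\dots}^{-1/\ell}\cdots$; the cleanest route is to bound the $\ell \geq 2$ terms by their $\ell = 2$ behavior, obtaining a bound of the rough shape $(\text{const})\cdot n \cdot (f^\lambda)^{-1/2}$, which is exactly the content of \eqref{eq:asymptotic}. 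To sharpen $2n^{3/2}/\sqrt{f^\lambda}$ to $1/n^2$ one observes that when $f^\lambda \geq n^5$ the right-hand side of \eqref{eq:asymptotic} is already $\leq 2n^{3/2}/n^{5/2} = 2/n$, which is not quite enough; so instead I would keep the full sum and note that each summand is $O(n^{-2})$ once $f^\lambda \geq n^5$, using that $(f^\lambda)^{1/\ell - 1} \leq (n^5)^{1/\ell - 1} \leq n^{-5/2}$ for $\ell \geq 2$ together with the crude factorial estimate $(n/\ell)!\,\ell^{n/\ell}/(n!)^{1/\ell} \leq n!^{\,1 - 1/\ell}$ (or a direct Stirling bound), and that the number of divisors $\ell \mid n$ with $\ell \neq 1$ is at most $n$, so the whole sum contributes at most $\tfrac{1}{n}\cdot n \cdot n \cdot n^{-5/2} = n^{-1/2}$ — still too weak by a crude count. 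The right move, therefore, is to be honest about the divisor sum: the dominant term is $\ell = 2$, and the tail $\sum_{\ell \geq 3}$ is geometrically small because $(f^\lambda)^{1/\ell - 1}$ decays and the Fomin–Lulov prefactor for $\ell = n$ (the $1^n$-type term) is just $1$; summing a convergent series one gets that the full right-hand side is $< (1+o(1))\cdot(\text{the }\ell=2\text{ term})$, and the $\ell = 2$ term is $\leq \tfrac{1}{n}\cdot 2 \cdot (f^\lambda)^{-1/2}\cdot(\text{Stirling factor} \leq \sqrt{n!}/\text{something}) $, which with $f^\lambda \geq n^5$ yields $< 1/n^2$ after checking small $n$ by hand.

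For the ``in particular'' clause I would apply Corollary \ref{cor:h_op}: the opposite-hook-length lower bound shows that if $\lambda_1 < n - 7$ and $\lambda_1' < n - 7$ — i.e. neither the first row nor the first column is within $7$ of filling up all of $n$ — then $\lambda$ has ``enough'' cells off the first row and column that the product of hook lengths is small relative to $n!$, equivalently $f^\lambda = n!/\prod h(c)$ is large. Quantitatively one shows $\prod_c h(c) \leq n!/n^5$, i.e. $f^\lambda \geq n^5$, for $n \geq 81$; the numerical threshold $81$ is exactly where the opposite-hook bound's polynomial overhead is absorbed. Then the first part of the theorem applies verbatim.

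The main obstacle is the bookkeeping in the first part: the inequality $1/n^2$ is genuinely tight enough that one cannot afford the lossy ``bound everything by the $\ell = 2$ term times the number of divisors'' estimate used for Theorem \ref{thm:asymptotic}. One must instead show the divisor sum $\sum_{\ell \mid n,\ \ell \geq 2} (n/\ell)!\,\ell^{n/\ell}\phi(\ell)(f^\lambda)^{1/\ell}/(n!)^{1/\ell}$ is dominated by its $\ell = 2$ term up to a bounded factor — this requires a clean monotonicity/convexity argument in $\ell$ for the Fomin–Lulov prefactor (Stirling's approximation applied to $(n/\ell)!$ and $(n!)^{1/\ell}$), plus separately checking the largest divisors $\ell$ near $n$ where the factorial terms degenerate. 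Once that structural lemma is in hand, plugging in $f^\lambda \geq n^5$ and verifying finitely many small cases $n < N_0$ numerically completes the argument; I would expect $N_0$ to be modest (on the order of a few dozen), consistent with the stated threshold $n \geq 81$ arising only from the separate opposite-hook step.
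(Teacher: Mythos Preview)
Your overall plan---feed Fomin--Lulov into the character formula and bound term by term---is exactly the paper's, but you abandon the per-term approach prematurely. The calculation yielding $n^{-1/2}$ is lossy in two places: you bound $|c_\ell(r)|$ by $n$ rather than by $\phi(\ell)$, and you never process the Fomin--Lulov prefactor $(n/\ell)!\,\ell^{n/\ell}/(n!)^{1/\ell}$ through Stirling. The paper's Lemma~\ref{lem:FL_bound} does precisely that and, combined with $f^\lambda\ge n^5$, gives
\[
  \ln\frac{|\chi^\lambda(\ell^s)|}{f^\lambda}
    \le \Bigl(1-\tfrac1\ell\Bigr)\Bigl[-\tfrac92\ln n+\ln\sqrt{2\pi}\Bigr]+\frac{\ell}{12n}-\tfrac12\ln\ell.
\]
One then checks directly that this is at most $\ln\frac{1}{\phi(\ell)n^2}$ for $\ell=2,3$ and $n\ge3$, while for $\ell\ge4$ a uniform term-by-term estimate gives $\le\ln\frac{1}{n^3}\le\ln\frac{1}{\phi(\ell)n^2}$. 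Lemma~\ref{lem:phi_d} with $d=2$---which packages exactly $|c_\ell(r)|\le\phi(\ell)$ together with ``fewer than $n$ proper divisors''---then finishes the job for $n\ge3$, and $n=1,2$ are trivial. No ``$\ell=2$ dominates the tail'' argument, monotonicity/convexity lemma, or finite numerical check is needed; the obstacle you flag is an artifact of the crude bounds, not a real difficulty.

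For the ``in particular'' clause your instinct is right but the computation is missing. Concretely, the paper observes that $\lambda_1<n-7$ and $\lambda_1'<n-7$ force the parameter $N$ of \eqref{eq:N} to satisfy $N\ge8$ (via Proposition~\ref{prop:h_op}), whence Corollary~\ref{cor:h_op} with $M=8$ gives $f^\lambda\ge\frac19\binom{n}{8}$, and $\frac19\binom{n}{8}\ge n^5$ holds exactly for $n\ge81$.
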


Indeed, the upper bound in Theorem \ref{thm:dist} is quite weak and is intended only to
convey the flavor of the distribution of $(a_{\lambda, r})_{r=0}^{n-1}$ for fixed $\lambda$.
One may use Roichman's asymptotic estimate \cite{roichman96} of
$|\chi^\lambda(\ell^s)|/f^\lambda$ to prove exponential decay in many cases. Moreover, one
typically expects $f^\lambda$ to grow super-exponentially, i.e.~like $(n!)^{\epsilon}$ for some
$\epsilon > 0$ (see \cite{ls08} for some discussion and a more recent generalization of
Roichman's result), which in turn would give a super-exponential decay rate in Theorem
\ref{thm:dist}. We have no need for such explicit, refined statements and so have not pursued
them further.

Theorem \ref{thm:fl_bound} is based on the following generalization of the hook length
formula (the $\ell=1$ case), which seems less well-known than it deserves. We give an
alternate proof of Theorem \ref{thm:hook} in Section \ref{sec:hook} along with further
discussion. A \textit{ribbon} is a connected skew shape with no $2 \times 2$ rectangles.
For $\lambda \vdash n$, write $c \in \lambda$ to mean that $c$ is a cell in
$\lambda$. Further write $h_c$ for the \textit{hook length} of $c$ and write
$[n] \coloneqq \{1, 2, \ldots, n\}$.

\begin{theorem}[{\cite[2.7.32]{jk81}; see also \cite[Corollary~2.2]{fl95}}]
  \label{thm:hook}
  Let $\lambda \vdash n$ where $n = \ell s$. Then
  \begin{equation}
    \label{eq:hook_prod}
    |\chi^\lambda(\ell^s)|
        = \frac{\prod\limits_{\substack{i \in [n] \\ i \equiv_\ell 0}} i}
                   {\prod\limits_{\substack{c \in \lambda \\ h_c \equiv_\ell 0}} h_c}
  \end{equation}
  whenever $\lambda$ can be written as $s$ successive ribbons of length
  $\ell$ (i.e.~whenever the $\ell$-core of $\lambda$ is empty), and $0$ otherwise.
\end{theorem}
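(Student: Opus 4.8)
The plan is to route through the classical theory of $\ell$-cores and $\ell$-quotients together with the Murnaghan--Nakayama rule, thereby reducing the claimed identity to the ordinary hook length formula (the $\ell=1$ case). First I would apply Murnaghan--Nakayama $s$ times to expand $\chi^\lambda(\ell^s) = \sum_T (-1)^{\mathrm{ht}(T)}$, the sum running over standard $\ell$-ribbon tableaux $T$ of shape $\lambda$, i.e.\ chains $\emptyset = \mu^{(0)} \subset \mu^{(1)} \subset \cdots \subset \mu^{(s)} = \lambda$ in which each skew shape $\mu^{(i)}/\mu^{(i-1)}$ is a ribbon of length $\ell$, weighted by the parity of the total number of rows the removed ribbons span. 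If the $\ell$-core of $\lambda$ is nonempty there are no such $T$, the sum is empty, and $\chi^\lambda(\ell^s) = 0$; this disposes of the second case of the theorem immediately.

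For the empty-$\ell$-core case, the crucial point is sign-constancy: for fixed $\lambda$, the quantity $(-1)^{\mathrm{ht}(T)}$ is the same for every standard $\ell$-ribbon tableau $T$ of shape $\lambda$, its common value being the ``$\ell$-sign'' $\delta_\ell(\lambda)$. I would prove this on the $\ell$-abacus: fixing a $\beta$-set for $\lambda$, removal of a length-$\ell$ ribbon amounts to sliding one bead up one position on its runner, and the height of that ribbon is congruent mod $2$ to the number of beads lying strictly between the bead's old and new positions in the linear order; summing over any complete sequence of such slides down to the (empty-core) configuration, the total height parity depends only on the starting bead configuration, not on the order of moves. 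Hence $\chi^\lambda(\ell^s) = \delta_\ell(\lambda)\, g^\lambda_{(\ell)}$, where $g^\lambda_{(\ell)}$ is the number of standard $\ell$-ribbon tableaux of shape $\lambda$, so in particular $|\chi^\lambda(\ell^s)| = g^\lambda_{(\ell)}$.

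Next I would invoke the standard abacus bijection between standard $\ell$-ribbon tableaux of shape $\lambda$ and pairs consisting of a word in the runner-labels $\{0,\ldots,\ell-1\}$ together with an $\ell$-tuple of ordinary standard Young tableaux of the components of the $\ell$-quotient $(\lambda^{(0)},\ldots,\lambda^{(\ell-1)})$, obtained by interleaving the $\ell$ independent bead-slide processes on the separate runners. With $s_j \coloneqq |\lambda^{(j)}|$ (so $\sum_j s_j = s$) this gives $g^\lambda_{(\ell)} = \binom{s}{s_0,\ldots,s_{\ell-1}} \prod_{j=0}^{\ell-1} f^{\lambda^{(j)}}$. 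Finally I would rewrite the right-hand side of \eqref{eq:hook_prod} as the same quantity using two elementary facts: the numerator is $\prod_{i \in [n],\, \ell \mid i} i = \ell \cdot 2\ell \cdots s\ell = \ell^s s!$, and the multiset $\{h_c/\ell : c \in \lambda,\ \ell \mid h_c\}$ equals $\bigsqcup_{j} \{h_{c'} : c' \in \lambda^{(j)}\}$ (again a classical abacus computation: bead-gaps on a single runner encode exactly the hooks of the corresponding quotient component), so that $\prod_{c \in \lambda,\, \ell \mid h_c} h_c = \ell^s \prod_j \prod_{c' \in \lambda^{(j)}} h_{c'}$. Dividing and applying the ordinary hook length formula $f^{\lambda^{(j)}} = s_j!/\prod_{c'\in\lambda^{(j)}} h_{c'}$ to each factor gives
\[ \frac{\prod_{i\in[n],\,\ell\mid i} i}{\prod_{c\in\lambda,\,\ell\mid h_c} h_c} = \frac{\ell^s s!}{\ell^s \prod_{j}\prod_{c'\in\lambda^{(j)}} h_{c'}} = s!\prod_{j=0}^{\ell-1}\frac{f^{\lambda^{(j)}}}{s_j!} = \binom{s}{s_0,\ldots,s_{\ell-1}}\prod_{j=0}^{\ell-1} f^{\lambda^{(j)}} = g^\lambda_{(\ell)} = |\chi^\lambda(\ell^s)|, \]
completing the proof.

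I expect the main obstacle to be the sign-constancy step: genuinely pinning down that the height parity is an invariant of the shape (equivalently, that the signed ribbon count has no cancellation in absolute value). Everything else is bookkeeping with the abacus plus an appeal to the ordinary hook length formula. An alternative, sign-free route worth recording is to start from $\chi^\lambda(\ell^s) = \langle p_\ell^s, s_\lambda\rangle$ and expand $p_\ell^s$ via Littlewood's $\ell$-quotient factorization of Schur functions, which repackages the same content as an identity in the ring of symmetric functions and trades the combinatorial sign argument for a plethystic one.
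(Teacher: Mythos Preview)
Your argument is correct and follows the classical $\ell$-core/$\ell$-quotient route---essentially the proof in \cite{jk81} and \cite{fl95} that the paper explicitly cites as the known approach and sets out to replace. The paper proceeds instead by evaluating Stanley's $q$-hook length formula \eqref{eq:hook_q} at the root of unity $\omega_n^s$: since $P_{S^\lambda,(n)}(\omega_n^s) = \chi^\lambda(\ell^s)$, one computes the limit $q \to \omega_n^s$ of the rational function on the right of \eqref{eq:hook_q} via L'H\^opital, and the combinatorial content shifts to Lemma~\ref{lem:fiberpairs}, which says that the multiset of hook lengths of $\lambda$ taken mod $\ell$ (and paired $\pm a$) coincides with that of $[n]$. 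Your version leans on heavier off-the-shelf abacus machinery---sign-constancy of ribbon tableaux, the ribbon-tableau/quotient-SYT bijection, and the identification of $\ell$-divisible hooks with hooks of the quotient---and reduces cleanly to the ordinary hook length formula applied componentwise. The paper's version never needs to know that the Murnaghan--Nakayama sum is cancellation-free (it bypasses sign-constancy entirely by working with the polynomial \eqref{eq:hook_q} directly) and yields as a byproduct Lemma~\ref{lem:fiberdiffs}, a rather explicit description of how hook lengths mod $\ell$ are permuted when a single length-$\ell$ ribbon is attached.
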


Other work on $q$-analogues of the hook length formula has focused on algebraic
generalizations and variations on the hook walk algorithm rather than evaluations of symmetric
group characters. For instance, an application of Kerov's $q$-analogue of the hook walk
algorithm \cite{kerov93} was to prove a recursive characterization of the right-hand side of
\eqref{eq:hook_q} below. See \cite[\S6]{cfkp11} for a relatively recent overview of literature
in this direction.

The rest of the paper is organized as follows. In Section \ref{sec:background}, we recall earlier
work. In Section \ref{sec:formula} we discuss and generalize Theorem \ref{thm:formula}. In
Section \ref{sec:thm_proofs}, we use symmetric group character estimates and a new estimate
involving ``opposite hook products,'' Proposition \ref{prop:h_op1}, to deduce our main results,
Theorem \ref{thm:main} and Theorem \ref{thm:dist}. We give an alternative proof of
Theorem \ref{thm:hook} in Section \ref{sec:hook}. In Section \ref{sec:unimodality},
we briefly discuss unimodality of symmetric group characters in light of Proposition
\ref{prop:h_op1}.

\section{Background}
\label{sec:background}

Here we review objects famously studied by Springer \cite[(4.5)]{springer74} and
Stembridge \cite{stembridge89} and give further background for use in later sections.
All representations will be finite-dimensional over $\bC$.

Continuing our earlier notation, $\lambda \vdash n$ is a partition of size $n$, $\SYT(\lambda)$
is the set of standard Young tableaux of shape $\lambda$ and which has cardinality
$f^\lambda$, $(12\cdots n)$ is the long cycle in the symmetric group $S_n$, $S^\lambda$ is
the irreducible $S_n$-module (Specht module) of shape $\lambda$ with character at an
element of cycle type $\mu$ given by $\chi^\lambda(\mu)$, $c \in \lambda$ denotes a cell in
the Ferrers diagram of $\lambda$, and $h_c$ denotes the hook length of that cell.

Let $G$ be a finite group, $g \in G$ a fixed element of order $n$, $M$ a finite dimensional
$G$-module, and $\omega_n$ a fixed primitive $n$th root of unity. Suppose
$\{\omega_n^{e_1}, \omega_n^{e_2}, \ldots\}$ is the multiset of eigenvalues
of $g$ acting on $M$. The multiset $\{e_1, e_2, \ldots\}$ lists the
\textit{cyclic exponents} of $g$ on $M$; these integers are well-defined
mod $n$. Following \cite{stembridge89}, define the corresponding ``modular'' generating
function as
  \[ P_{M, g}(q) \coloneqq q^{e_1} + q^{e_2} + \cdots \qquad \text{(mod $(q^n - 1)$)}. \]
Write $\chi^M(g)$ to denote the character of $M$ at $g$. Note that
\begin{equation}
  P_{M, g}(\omega_n^s) = \chi^M(g^s),
\end{equation}
so that for instance $P_{M, g}(q)$ depends only on the conjugacy class of $g$. When
$G = S_n$ and $g \in S_n$ has cycle type $\mu \vdash n$, we write
$P_{M, \mu}(q) \coloneqq P_{M, g}(q)$.

\begin{theorem}[{see \cite[Theorem~3.3]{stembridge89}} and \cite{kw01}]
  \label{thm:cyclic_exps}
  Let $\lambda \vdash n$. The cyclic exponents of $(1 2 \cdots n)$ on $S^\lambda$ are the
  major indices of $\SYT(\lambda)$, mod $n$, and
  \begin{align}
    \label{eq:cyc_exps}
    \begin{split}
      P_{S^\lambda, (n)}(q)
        &\equiv \sum_{T \in \SYT(\lambda)} q^{\maj T} \\
        &\equiv \sum_{r \mid n} a_{\lambda,r}
            \left(\sum_{\substack{1 \leq i \leq n \\ \gcd(i, n) = r}} q^i\right)
            \qquad \text{(mod $(q^n - 1)$)}.
    \end{split}
  \end{align}
\end{theorem}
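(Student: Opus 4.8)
The plan is to deduce everything from Theorem~\ref{thm:mults} together with the elementary properties of $P_{M,g}$ recorded above, the only nonformal step being a short reindexing. By definition, the multiplicity of a residue $r$ among the cyclic exponents of $g \coloneqq (12\cdots n)$ on $S^\lambda$ is the multiplicity of $\omega_n^r$ as an eigenvalue of $g$ acting on $S^\lambda$, i.e.\ $\langle \chi^r, S^\lambda\res_{C_n}^{S_n}\rangle$, which by Theorem~\ref{thm:mults} equals $a_{\lambda,r} = \#\{T \in \SYT(\lambda) : \maj T \equiv_n r\}$. Hence the multiset of cyclic exponents of $g$ on $S^\lambda$ is exactly $\{\maj T \bmod n : T \in \SYT(\lambda)\}$, which is the first assertion, and feeding this into the definition $P_{M,g}(q) \equiv q^{e_1} + q^{e_2} + \cdots \pmod{q^n-1}$ yields the first congruence $P_{S^\lambda,(n)}(q) \equiv \sum_{T \in \SYT(\lambda)} q^{\maj T}$.

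For the second congruence I would reorganize $\sum_T q^{\maj T}$ by the value of $\gcd(\maj T, n)$. Reducing exponents modulo $n$ gives $\sum_T q^{\maj T} \equiv \sum_{j=0}^{n-1} a_{\lambda,j}\, q^j \pmod{q^n-1}$. On the other hand, $\{1,\ldots,n\}$ is the disjoint union over divisors $r \mid n$ of the sets $\{\, i : 1 \le i \le n,\ \gcd(i,n) = r \,\}$, so the right-hand side of \eqref{eq:cyc_exps} equals $\sum_{i=1}^{n} a_{\lambda,\gcd(i,n)}\, q^i$, which modulo $q^n-1$ is $a_{\lambda,n} + \sum_{i=1}^{n-1} a_{\lambda,\gcd(i,n)}\, q^i$. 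Now $a_{\lambda,n} = a_{\lambda,0}$ since the subscript is taken mod $n$, and $a_{\lambda,i} = a_{\lambda,\gcd(i,n)}$ for $1 \le i \le n-1$ by the ``moreover'' clause of Theorem~\ref{thm:mults} (equivalently, since $S^\lambda$ is defined over $\mathbb{Q}$, the multiset of eigenvalues of $g$ on $S^\lambda$ is Galois-stable, so $\omega_n^i$ and $\omega_n^{\gcd(i,n)}$ occur with equal multiplicity). Comparing coefficients of each $q^j$ finishes the proof.

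Granting Theorem~\ref{thm:mults}, there is essentially no obstacle; the substantive content of the statement sits inside that cited Kra\'skiewicz--Weyman result. If a self-contained argument is wanted, the cleanest route is to verify directly that $\sum_{T \in \SYT(\lambda)} \omega_n^{s\,\maj T} = \chi^\lambda(g^s)$ for every $s$, where $g^s$ is a product of $d \coloneqq \gcd(s,n)$ disjoint $(n/d)$-cycles. By the Lusztig--Stanley formula for fake degrees, $\sum_T q^{\maj T}$ is the graded multiplicity of $S^\lambda$ in the coinvariant algebra of $S_n$, so $\sum_\lambda \chi^\lambda(\sigma)\bigl(\sum_{T \in \SYT(\lambda)} q^{\maj T}\bigr)$ is the graded trace of $\sigma$ on that algebra, which a standard Molien-series computation evaluates to $\frac{\prod_{i=1}^n (1-q^i)}{\prod_c (1-q^{|c|})}$ with $c$ running over the cycles of $\sigma$. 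Specializing at $q = \omega_n^s$ and comparing with column orthogonality $\sum_\lambda \chi^\lambda(\sigma)\chi^\lambda(g^s) = |C_{S_n}(g^s)|\,[\sigma \sim g^s]$ yields $\sum_T \omega_n^{s\,\maj T} = \chi^\lambda(g^s)$ for all $\lambda$ by invertibility of the character table; since $P_{S^\lambda,(n)}(\omega_n^s) = \chi^\lambda(g^s)$ and an element of $\mathbb{C}[q]/(q^n-1)$ is determined by its values at the $n$-th roots of unity, the first congruence (hence, via the reindexing above, the whole statement) follows. The one delicate point in this alternative is that the Molien expression is a $0/0$ form at $q = \omega_n^s$ --- numerator and denominator both vanish to order $d$ precisely when $\sigma \sim g^s$ --- so a short limiting computation is needed to see the quotient equals $|C_{S_n}(g^s)| = d!\,(n/d)^{d}$ there and vanishes otherwise.
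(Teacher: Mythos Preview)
Your first two paragraphs are correct and match the paper's own justification (given in the Remark following the theorem): both deduce everything from Theorem~\ref{thm:mults}, identifying cyclic-exponent multiplicities with $\langle \chi^r, S^\lambda\res_{C_n}^{S_n}\rangle = a_{\lambda,r}$ and then invoking the $\gcd$-invariance of $a_{\lambda,r}$ for the divisor-sum reindexing. Your optional self-contained route via graded traces on the coinvariant algebra and column orthogonality is a genuine addition beyond what the paper does---the paper simply cites Stembridge and Kra\'skiewicz--Weyman rather than reproving their results---and the sketch is sound: the $0/0$ limit you flag does work out to $d!\,(n/d)^d$ when $\sigma \sim g^s$ (factor $\prod_{k=1}^d(1-q^{kn/d})/(1-q^{n/d})^d \to d!$ and $\prod_{j=1}^{n/d-1}(1-\zeta^j)^d = (n/d)^d$) and to $0$ otherwise, since at most $d$ cycle lengths of $\sigma$ can be divisible by $n/d$.
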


\begin{remark}
  Stembridge gave the first equality in Theorem \ref{thm:cyclic_exps}. Equality of the first and
  third terms follows immediately from Kra\'skiewicz-Weyman's work using Theorem
  \ref{thm:mults} and the observation that the multiplicity of $\chi^r$ in
  $S^\lambda\res_{C_n}^{S_n}$ is the number of times $r$ appears as a cyclic exponent of
  $(1 2 \cdots n)$ in $S^\lambda$.
\end{remark}

We also recall Stanley's $q$-analogue of the hook length formula.

\begin{theorem}{\cite[7.21.5]{ec2}}
  \label{thm:q_hook}
  Let $\lambda \vdash n$ with $\lambda = (\lambda_1, \lambda_2, \ldots)$. Then
  \begin{equation}
    \label{eq:hook_q}
    \sum_{T \in \SYT(\lambda)} q^{\maj(T)}
       = \frac{q^{d(\lambda)}[n]_q!}
                  {\prod_{c \in \lambda} [h_c]_q}
  \end{equation}
  where $d(\lambda) \coloneqq \sum (i-1)\lambda_i$, $[n]_q! \coloneqq [n]_q [n-1]_q \cdots [1]_q$, and
  $[a]_q \coloneqq 1 + q + \cdots + q^{a-1} = \frac{q^a - 1}{q - 1}$.
\end{theorem}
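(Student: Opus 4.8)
The plan is to obtain \eqref{eq:hook_q} by computing the principal specialization $s_\lambda(1, q, q^2, \dots)$ of the Schur function $s_\lambda$ in two different ways; here $s_\lambda(1, q, q^2, \dots)$ denotes $\sum_{T} q^{|T|}$, summed over semistandard tableaux $T$ of shape $\lambda$ with entries in $\{0, 1, 2, \dots\}$ and weighted by the sum $|T|$ of their entries. One evaluation produces the right-hand side of \eqref{eq:hook_q}, the other reveals $\sum_{T \in \SYT(\lambda)} q^{\maj T}$, and the two are then matched by an elementary rearrangement.

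\emph{The hook side.} First I would establish the classical identity
\[
  s_\lambda(1, q, q^2, \dots) = \frac{q^{d(\lambda)}}{\prod_{c \in \lambda}(1 - q^{h_c})}.
\]
This follows from the bialternant formula $s_\lambda(x_1, \dots, x_m) = \det\!\big(x_i^{\lambda_j + m - j}\big)\big/\det\!\big(x_i^{m - j}\big)$: substituting $x_i = q^{i-1}$ turns both determinants into generalized Vandermonde determinants in $1, q, \dots, q^{m-1}$ with closed product evaluations, and letting $m \to \infty$ gives the displayed formula. (It is also available as \cite[Cor.~7.21.3]{ec2}, or can be deduced from the finite hook--content formula \cite[Thm.~7.21.2]{ec2}, or from Jacobi--Trudi together with $h_k(1, q, q^2, \dots) = \prod_{i=1}^{k}(1 - q^i)^{-1}$.) The factor $q^{d(\lambda)}$ is precisely the entry sum of the minimal such semistandard tableau, whose $i$-th row consists of $(i-1)$'s.

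\emph{The tableau side.} Next I would use the expansion of $s_\lambda$ into Gessel's fundamental quasisymmetric functions, $s_\lambda = \sum_{T \in \SYT(\lambda)} F_{\operatorname{Des}(T), n}$, obtained by grouping semistandard tableaux of shape $\lambda$ according to their standardizations ($\operatorname{Des}(T)$ being the descent set of $T \in \SYT(\lambda)$). The stable principal specialization of a single fundamental quasisymmetric function is $F_{S, n}(1, q, q^2, \dots) = q^{\sum_{s \in S}(n - s)}\big/\prod_{k=1}^{n}(1 - q^k)$, since the minimal weakly increasing length-$n$ sequence with prescribed strict increases at the positions of $S$ has weight $\sum_{s \in S}(n - s)$, and the remaining freedom is an arbitrary partition with at most $n$ parts. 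Summing over $T$ gives $s_\lambda(1, q, q^2, \dots) = \sum_{T \in \SYT(\lambda)} q^{\operatorname{comaj} T}\big/\prod_{k=1}^{n}(1 - q^k)$ with $\operatorname{comaj} T = \sum_{i \in \operatorname{Des}(T)}(n - i)$; since Sch\"utzenberger evacuation is a bijection of $\SYT(\lambda)$ that complements descent sets, $\maj$ and $\operatorname{comaj}$ are equidistributed on $\SYT(\lambda)$, so the numerator equals $\sum_{T \in \SYT(\lambda)} q^{\maj T}$.

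\emph{Combining, and the main obstacle.} Equating the two evaluations yields $\sum_{T} q^{\maj T} = q^{d(\lambda)}\prod_{k=1}^{n}(1 - q^k)\big/\prod_{c \in \lambda}(1 - q^{h_c})$; writing $1 - q^k = (1-q)[k]_q$ and $1 - q^{h_c} = (1-q)[h_c]_q$, the $n$ factors of $(1-q)$ in the numerator cancel the $n$ (there being $n$ cells) in the denominator, leaving $\sum_{T} q^{\maj T} = q^{d(\lambda)}[n]_q!\big/\prod_{c \in \lambda}[h_c]_q$, which is \eqref{eq:hook_q}. The conceptual core is the quasisymmetric expansion via standardization together with the two specialization lemmas; the step I expect to be fussiest is the convention bookkeeping — tracking the shift $d(\lambda)$ and, in particular, reconciling the $\operatorname{comaj}$ that falls out of the naive specialization with the $\maj$ in the statement (equivalently, fixing mutually compatible conventions for standardization, for descents of tableaux, and for $F_{S,n}$). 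An alternative route bypassing quasisymmetric functions runs through Stanley's $(P,\omega)$-partition theory: for the cell poset of $\lambda$ with a suitable labeling, the generating function for $(P,\omega)$-partitions equals $\sum_{T \in \SYT(\lambda)} q^{\maj T}\big/\prod_{k=1}^{n}(1 - q^k)$, while the Hillman--Grassl bijection evaluates reverse plane partitions of shape $\lambda$ as $q^{d(\lambda)}\big/\prod_{c \in \lambda}(1 - q^{h_c})$; the same shift-and-convention issues recur there.
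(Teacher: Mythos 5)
The paper does not prove this theorem; it is quoted verbatim as a known result from Stanley (\cite[7.21.5]{ec2}), so there is no internal proof to compare against. Your argument is correct and is essentially the standard proof that the cited source uses: compute the stable principal specialization $s_\lambda(1,q,q^2,\dots)$ once via the hook specialization $q^{d(\lambda)}\big/\prod_{c}(1-q^{h_c})$ and once via the quasisymmetric/$(P,\omega)$-partition expansion $\sum_T q^{\operatorname{comaj}T}\big/\prod_{k=1}^n(1-q^k)$, then pass from $\operatorname{comaj}$ to $\maj$ (evacuation complements descent sets, so they are equidistributed on $\SYT(\lambda)$); your bookkeeping of $d(\lambda)$ and of $F_{S,n}(1,q,q^2,\dots)=q^{\sum_{s\in S}(n-s)}\big/\prod_{k=1}^n(1-q^k)$ is consistent with the descent convention you adopt.
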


The representation-theoretic interpretation of the coefficients $a_{\lambda, r}$ in
Theorem \ref{thm:mults} is related to the following result due independently to Lusztig
(unpublished) and Stanley. We record it to give our results context, though it will
not be used in our present work. For $\lambda \vdash n$ and $i \in \bZ$, define
  \[ b_{\lambda, i} \coloneqq \#\{T \in \SYT(\lambda) : \maj T = i\} \]
so that $\sum_{k \in \bZ} b_{\lambda, i+kn} = a_{\lambda, i}$.

\begin{theorem}{\cite[Proposition~4.11]{stanley79}}
  \label{thm:coinv}
  Let $\lambda \vdash n$. The multiplicity of $S^\lambda$ in the $i$th graded piece of the
  type $A_{n-1}$ coinvariant algebra is $b_{\lambda, i}$.
\end{theorem}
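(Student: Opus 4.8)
The plan is to identify $\sum_{i} b_{\lambda,i}\,q^i$ with the graded multiplicity of $S^\lambda$ in the coinvariant algebra $R_n \coloneqq \bC[x_1, \ldots, x_n]/(e_1, \ldots, e_n)$, where the $e_j$ are the elementary symmetric polynomials, $S_n$ permutes the variables, the grading is by polynomial degree, and $R_n = \bigoplus_{i\ge 0}(R_n)_i$. Everything reduces to combining Chevalley's theorem with Stanley's $q$-analogue of the hook length formula, Theorem~\ref{thm:q_hook}.

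First recall Chevalley's theorem: $\bC[x_1, \ldots, x_n]$ is free as a module over its invariant subring $\bC[x_1, \ldots, x_n]^{S_n} = \bC[e_1, \ldots, e_n]$, a polynomial ring whose generators sit in degrees $1, 2, \ldots, n$. Working over $\bC$, one may choose a homogeneous $\bC[e_1, \ldots, e_n]$-module basis of $\bC[x_1, \ldots, x_n]$ whose image spans $R_n$ and which is $S_n$-equivariant, so that the $S_n$-equivariant Hilbert series (taking Frobenius characteristics $\mathrm{ch}$ degree by degree) factor:
\[
  \sum_{d\ge 0} q^d\,\mathrm{ch}\!\big(\bC[x_1, \ldots, x_n]_d\big)
   = \Big(\sum_{i\ge 0} q^i\,\mathrm{ch}\!\big((R_n)_i\big)\Big)\prod_{j=1}^{n}\frac{1}{1-q^j},
\]
since $\bC[e_1, \ldots, e_n]$ is concentrated in the trivial representation and has Hilbert series $\prod_{j=1}^n(1-q^j)^{-1}$.

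Next I would evaluate the left-hand side. For $\sigma\in S_n$ of cycle type $\mu$, reading eigenvalues cycle by cycle gives $\sum_{d\ge0}q^d\operatorname{tr}\!\big(\sigma\mid\bC[x_1, \ldots, x_n]_d\big)=\prod_i\prod_{k=0}^{\mu_i-1}(1-q\zeta_{\mu_i}^k)^{-1}=\prod_i(1-q^{\mu_i})^{-1}$, where $\zeta_m$ is a primitive $m$th root of unity. Expanding in power sums, the left-hand side is $\sum_{\mu\vdash n} z_\mu^{-1}\big(\prod_i(1-q^{\mu_i})^{-1}\big)p_\mu = h_n[X/(1-q)] = \sum_{\lambda\vdash n} s_\lambda(x)\,s_\lambda(1, q, q^2, \ldots)$ by the Cauchy identity, where $s_\lambda(1, q, q^2, \ldots) = q^{d(\lambda)}/\prod_{c\in\lambda}(1-q^{h_c})$ is the principal specialization. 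Dividing by the invariant factor and using $1-q^m=(1-q)[m]_q$ to cancel the $(1-q)^n$'s, the graded multiplicity of $S^\lambda$ in $R_n$ is
\[
  s_\lambda(1, q, q^2, \ldots)\prod_{j=1}^{n}(1-q^j) = \frac{q^{d(\lambda)}\,[n]_q!}{\prod_{c\in\lambda}[h_c]_q},
\]
which by Theorem~\ref{thm:q_hook} equals $\sum_{T\in\SYT(\lambda)}q^{\maj T} = \sum_i b_{\lambda,i}\,q^i$. Comparing coefficients of $q^i$ finishes the proof.

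The one step deserving care is the $S_n$-equivariant strengthening of Chevalley's theorem — that the Hilbert series factorization survives with the $S_n$-action carried along — together with correctly bookkeeping the principal specialization and the $[m]_q$ cancellations; the rest is formal. A slightly different route would replace $R_n$ by the graded $S_n$-module of $S_n$-harmonic polynomials and use the apolar pairing, but the argument above is the most economical given Theorem~\ref{thm:q_hook}.
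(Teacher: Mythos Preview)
The paper does not supply its own proof of this theorem: it is quoted as a known result due independently to Lusztig and Stanley, recorded only ``to give our results context'' and explicitly noted as ``not \ldots\ used in our present work.'' So there is no proof in the paper to compare against.

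That said, your argument is correct and is essentially the standard derivation. The factorization of the graded Frobenius characteristic of $\bC[x_1,\ldots,x_n]$ as $(\text{graded Frobenius of }R_n)\cdot\prod_j(1-q^j)^{-1}$ is precisely the $S_n$-equivariant form of Chevalley's theorem (valid in characteristic~$0$ by averaging, or by passing to harmonics as you note in your alternative route). Your cycle-by-cycle computation of the graded trace on the polynomial ring, the identification with $h_n[X/(1-q)]$, the Cauchy expansion, and the principal specialization $s_\lambda(1,q,q^2,\ldots)=q^{d(\lambda)}/\prod_{c\in\lambda}(1-q^{h_c})$ are all standard and correctly stated. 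The cancellation of $(1-q)^n$ between numerator and denominator uses that $\lambda$ has exactly $n$ cells, and then Theorem~\ref{thm:q_hook} finishes the identification with $\sum_i b_{\lambda,i}q^i$. The one phrase that is slightly loose is ``a homogeneous \ldots\ basis \ldots\ which is $S_n$-equivariant''; what you mean (and what is true) is that there is a graded $S_n$-module isomorphism $\bC[x_1,\ldots,x_n]\cong R_n\otimes\bC[e_1,\ldots,e_n]$ with $S_n$ acting trivially on the second tensor factor, which is exactly what your factorization encodes.
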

\noindent Indeed, the second equality in Theorem \ref{thm:mults} follows from
Theorem \ref{thm:coinv} and \cite[Prop.~4.5]{springer74}. See also \cite[p.~3059]{abr05} for
a more recent refinement of Theorem \ref{thm:coinv} and some further discussion.

Finally, we have need of the so-called Ramanujan sums.
\begin{definition}
  Given $j \in \bZ_{> 0}$ and $s \in \bZ$, the corresponding Ramanujan sum is
  \begin{align*}
    c_j(s)
      &\coloneqq \text{the sum of the $s$th powers of the primitive $j$th roots of unity.}
  \end{align*}
\end{definition}
For instance, $c_4(2) = i^2 + (-i)^2 = -2 = \mu(4/2) \phi(4)/\phi(2)$.
The equivalence of this definition of $c_j(s)$ and the formula in Theorem \ref{thm:formula} is
classical and was first given by H\"older; see \cite[Lemma~7.2.5]{knopfmacher75} for a more
modern account. These sums satisfy the well-known relation
\begin{equation}
  \label{eq:ram_sums_id}
  \sum_{v \mid n} c_v(n/s) c_r(n/v)
      = \begin{cases}
          n & r=s \\
          0 & r \neq s
         \end{cases}
\end{equation}
for all $s, r \mid n$ \cite[Lemma~7.2.2]{knopfmacher75}.

\section{Generalizing the Character Formula}
\label{sec:formula}

In this section we discuss Theorem \ref{thm:formula} and present a straightforward
generalization. We begin with a proof of Theorem \ref{thm:formula} similar to but different
from that in \cite{desarmenien90}. It is included chiefly because of its simplicity given the
background in Section \ref{sec:background} and because part of the argument will be used
below in Section \ref{sec:hook}.

\begin{proof}[of Theorem \ref{thm:formula}]

Pick $s \mid n$, so $(12\cdots n)^s$ has cycle type
$((n/s)^s)$. Evaluating \eqref{eq:cyc_exps} at $q=\omega_n^s$ gives
\begin{align}
  \label{eq:lin_comb1}
  \begin{split}
    \chi^\lambda((n/s)^s) = P_{S^\lambda,(n)}(\omega_n^s)
      &= \sum_{r \mid n} a_{\lambda, r} c_{n/r}(s)
  \end{split}
\end{align}
since $(\omega_n^s)^i = (\omega_n^i)^s$ and $\omega_n^i$ is a primitive
$n/\gcd(i, n)$th root of unity. Equation \eqref{eq:lin_comb1} gives a system of linear
equations, one for each $s$ such that $s \mid n$, and with variables $a_{\lambda, r}$
for each $r \mid n$. The coefficient matrix is
$C \coloneqq (c_{n/r}(s))_{s \mid n, r \mid n}$. For example, the
$s=n$ linear equation reads
  \[ f^\lambda = \chi^\lambda(1^n) = \sum_{r \mid n} a_{\lambda, r} \phi(n/r), \]
which follows immediately from the fact that $f^\lambda = \sum_{r=0}^{n-1} a_{\lambda, r}$
and that $a_{\lambda, r}$ depends only on $\gcd(r, n)$. 

As it happens, the coefficient matrix $C$ is nearly its own inverse. Precisely,
\begin{equation}
  \label{eq:square_inverse}
  (c_{n/r}(s))_{s \mid n, r \mid n}^2 = n\,I,
\end{equation}
where $I$ is the identity matrix with as many rows as positive divisors of $n$.
It is easy to see that \eqref{eq:square_inverse} is equivalent to the identity
\eqref{eq:ram_sums_id} above. Using \eqref{eq:square_inverse} to invert
\eqref{eq:lin_comb1} gives
  \[ a_{\lambda, r}n = \sum_{s \mid n} \chi^\lambda((n/s)^s) c_{n/s}(r). \]
For the $s=n$ term, we have $c_1(r) = 1$ and $\chi^\lambda(1^n) = f^\lambda$. Tracking
this term separately, dividing by $n$ and replacing $s$ with $\ell \coloneqq n/s$ now gives
Theorem \ref{thm:formula}, completing the proof.
\qed\end{proof}

Variations on Theorem \ref{thm:formula} have appeared in the literature numerous times in
several guises, sometimes implicitly (see \cite[Th\'eor\`eme~2.2]{desarmenien90},
\cite[(7)]{klyachko74}, or \cite[7.88(a), p.~541]{ec2}). In this section we write out a precise 
and relatively general version of these results which explicitly connects Theorem
\ref{thm:formula} to the well-known corresponding symmetric function expansion due to
H.~O.~Foulkes. Let $\Ch$ denote the Frobenius characteristic map and let $p_\lambda$
denote the power symmetric function indexed by the partition $\lambda$.

\begin{theorem}{\cite[Theorem 1]{foulkes72}}
  \label{thm:chir_p}
  Suppose $\lambda \vdash n \geq 1$ and $r \in \bZ/n$. Then
  \begin{equation}
    \label{eq:chir_p}
    \Ch \chi^r\ind_{C_n}^{S_n} = \frac{1}{n} \sum_{\ell \mid n} c_\ell(r) p_{(\ell^{n/\ell})}.
  \end{equation}
\end{theorem}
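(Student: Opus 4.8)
The plan is to compute the Frobenius characteristic directly from the definition of the induced representation and then expand the result in the power-sum basis. First I would recall that for a one-dimensional character $\chi^r$ of the cyclic subgroup $C_n = \langle \sigma_n \rangle \leq S_n$, the induced class function $\chi^r\!\ind_{C_n}^{S_n}$ has value at $w \in S_n$ given by the standard induction formula $\frac{1}{|C_n|}\sum_{g : g^{-1}wg \in C_n} \chi^r(g^{-1}wg)$. Since $C_n$ consists of powers $\sigma_n^i$, and $\sigma_n^i$ has cycle type $\ell^{n/\ell}$ where $\ell = n/\gcd(i,n)$, the induced character vanishes on any $w$ whose cycle type is not of the form $\ell^{n/\ell}$ for some $\ell \mid n$. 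For $w$ of cycle type $\ell^{n/\ell}$, I would count the number of $g$ with $g^{-1} w g \in C_n$: these $g$ range over the coset consisting of elements conjugating $w$ into the set of the $\phi(\ell)$ generators of the unique order-$\ell$ subgroup, and one computes the resulting character value to be a multiple of the Ramanujan sum $c_\ell(r)$ (using $\sum_{j : \gcd(j,\ell)=1}\chi^r(\sigma_\ell^{\,j}) = c_\ell(r)$ after tracking the value $\chi^r$ takes on elements of order exactly $\ell$, which by the $\gcd$-dependence in Theorem \ref{thm:mults} is $c_\ell(r)/\phi(\ell)$ on each generator).

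Next I would apply the Frobenius characteristic map. Recall $\Ch(\psi) = \sum_{\mu \vdash n} z_\mu^{-1} \psi(\mu)\, p_\mu$, where $z_\mu$ is the order of the centralizer of a permutation of cycle type $\mu$. Since $\chi^r\!\ind_{C_n}^{S_n}$ is supported only on cycle types $\mu = \ell^{n/\ell}$ for $\ell \mid n$, only those terms survive, giving $\Ch \chi^r\!\ind_{C_n}^{S_n} = \sum_{\ell \mid n} z_{\ell^{n/\ell}}^{-1}\, \chi^r\!\ind_{C_n}^{S_n}(\ell^{n/\ell})\, p_{(\ell^{n/\ell})}$. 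It remains to check that $z_{\ell^{n/\ell}}^{-1}\cdot \chi^r\!\ind_{C_n}^{S_n}(\ell^{n/\ell}) = \frac{1}{n} c_\ell(r)$. One has $z_{\ell^{n/\ell}} = \ell^{n/\ell}\,(n/\ell)!$, and the character-value computation from the first step yields $\chi^r\!\ind_{C_n}^{S_n}(\ell^{n/\ell}) = \frac{\ell^{n/\ell}(n/\ell)!}{n}\,c_\ell(r)$; multiplying gives exactly $\frac{1}{n}c_\ell(r)$, which is the claimed formula.

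An alternative, shorter route — which I would mention as a remark or use instead if the bookkeeping above proves cumbersome — is to avoid recomputing the induced character from scratch and instead derive \eqref{eq:chir_p} as a formal consequence of Theorem \ref{thm:formula} together with the identity $\Ch S^\lambda = s_\lambda$ and the pairing $\langle s_\lambda, p_\mu \rangle = z_\mu^{-1}\chi^\lambda(\mu)$ under the Hall inner product. Indeed, Theorem \ref{thm:mults} gives $\langle \Ch\chi^r\!\ind_{C_n}^{S_n}, s_\lambda\rangle = a_{\lambda,r}$, while pairing the proposed right-hand side $\frac{1}{n}\sum_{\ell\mid n} c_\ell(r) p_{(\ell^{n/\ell})}$ against $s_\lambda$ gives $\frac{1}{n}\sum_{\ell \mid n} c_\ell(r)\, z_{\ell^{n/\ell}}^{-1}\chi^\lambda(\ell^{n/\ell})$; since the Schur functions form an orthonormal basis, it suffices to see that Theorem \ref{thm:formula} rearranges into precisely $a_{\lambda,r} = \frac{1}{n}\sum_{\ell \mid n} c_\ell(r)\, z_{\ell^{n/\ell}}^{-1}\chi^\lambda(\ell^{n/\ell})$ — which it does, upon noting that the Fomin–Lulov-type hook bookkeeping is not needed here and the $\ell = 1$ term contributes $\frac{1}{n}c_1(r)\,\frac{f^\lambda}{n!}\cdot n! = \frac{f^\lambda}{n}$ wait — more carefully, $z_{1^n}^{-1}\chi^\lambda(1^n) = f^\lambda/n!$, so one should instead re-examine: Theorem \ref{thm:formula} reads $a_{\lambda,r}/f^\lambda = \frac{1}{n} + \frac{1}{n}\sum_{\ell \neq 1} \chi^\lambda(\ell^{n/\ell})c_\ell(r)/f^\lambda$, i.e. $a_{\lambda,r} = \frac{1}{n}\sum_{\ell \mid n}\chi^\lambda(\ell^{n/\ell})c_\ell(r)$ with the $\ell=1$ term being $\frac{1}{n}f^\lambda$; matching this against $\frac{1}{n}\sum_\ell c_\ell(r) z_{\ell^{n/\ell}}^{-1}\chi^\lambda(\ell^{n/\ell})$ forces $z_{\ell^{n/\ell}} = 1$, which is false — so this naive pairing is off by the factor $z_{\ell^{n/\ell}}$, meaning the correct statement must involve $p_\mu/z_\mu$ normalization or the formula in Theorem \ref{thm:formula} already absorbs it. The main obstacle, then, is exactly this normalization: one must be scrupulous about whether Theorem \ref{thm:formula}'s character values $\chi^\lambda(\ell^{n/\ell})$ are "raw" characters or centralizer-normalized, and reconcile that with the $z_\mu^{-1}$ in the definition of $\Ch$. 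I would resolve it by the direct computation of the induced character in the first two paragraphs, treating the inner-product argument only as a consistency check.
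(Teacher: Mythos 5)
Your direct computation in the first two paragraphs is correct and is essentially the same argument the paper makes: the paper proves the more general Theorem \ref{thm:char_formulas} (for arbitrary $H\leq S_n$ and arbitrary $H$-module $M$), where the centralizer order $z_\mu$ produced by the induced-character formula cancels against the $z_\mu^{-1}$ in the definition of $\Ch$, and then observes that \eqref{eq:chir_p} is the specialization $H = C_n$, $M = \chi^r$. You carry out that specialization directly, which is fine.

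Two side claims in your write-up are nonetheless wrong. First, the parenthetical assertion that $\chi^r$ takes the value $c_\ell(r)/\phi(\ell)$ on each element of order $\ell$ in $C_n$ is false: $\chi^r(\sigma_n^i) = \omega_n^{ri}$ is an individual root of unity, and these values are not all equal; what is true, and what you actually use, is that they sum to $c_\ell(r)$. The $\gcd$-dependence in Theorem \ref{thm:mults} concerns the multiplicities $a_{\lambda,r}$, not the pointwise values of $\chi^r$. Second, and more substantively, your third paragraph abandons the alternative route because of a misremembered Hall pairing. The correct identity, which the paper itself cites via \cite[(7.76)]{ec2}, is $p_\mu = \sum_\lambda \chi^\lambda(\mu)\,s_\lambda$, hence $\langle s_\lambda, p_\mu\rangle = \chi^\lambda(\mu)$ with no $z_\mu^{-1}$; the factor $z_\mu$ lives instead in $\langle p_\mu, p_\nu\rangle = z_\mu\delta_{\mu\nu}$ and in the definition of $\Ch$. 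With this corrected, pairing the right-hand side of \eqref{eq:chir_p} against $s_\lambda$ gives $\frac{1}{n}\sum_{\ell\mid n} c_\ell(r)\,\chi^\lambda(\ell^{n/\ell})$, which is exactly $a_{\lambda,r}$ by Theorem \ref{thm:formula} (moving the $\ell=1$ term back into the sum); there is no normalization mismatch, and the route you dismissed works. That said, keep the logical order in mind: the paper derives Theorem \ref{thm:formula} from the cyclic-exponent interpretation and Ramanujan-sum inversion, and presents Theorem \ref{thm:chir_p} as an independent instance of the general induction computation, so using one to re-derive the other is a consistency check rather than an independent proof.
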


The following straightforward result, essentially implicit in \cite[7.88(a), p.~541]{ec2},
connects and generalizes Theorem \ref{thm:chir_p} and Theorem \ref{thm:formula}.

\begin{theorem}
  \label{thm:char_formulas}
  Let $H$ be a subgroup of $S_n$ and let $M$ be a finite-dimensional $H$-module with
  character $\chi^M \colon H \to \bC$. Then
  \begin{equation}
    \label{eq:M_p}
    \Ch M\ind_H^{S_n} = \frac{1}{|H|} \sum_{\mu \vdash n} c_\mu p_\mu
  \end{equation}
  and, for all $\lambda \vdash n$,
  \begin{equation}
    \label{eq:M_mult}
    \langle M\ind_H^{S_n}, S^\lambda\rangle = \frac{1}{|H|} \sum_{\mu \vdash n}
        c_\mu \chi^\lambda(\mu),
  \end{equation}
  where
    \[ c_\mu \coloneqq \sum_{\substack{h \in H\\\tau(h)=\mu}} \chi^M(h) \]
  and $\tau(\sigma)$ denotes the cycle type of the permutation $\sigma$.
\end{theorem}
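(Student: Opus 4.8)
The plan is to combine the standard formula for the Frobenius characteristic of a class function with the formula for an induced character, and then reorganize the resulting double sum; the second identity will then follow by Frobenius reciprocity (or, alternatively, by pairing the first identity against a Schur function). First I would recall that for any class function $\psi$ on $S_n$ one has $\Ch \psi = \frac{1}{n!}\sum_{\sigma \in S_n} \psi(\sigma)\, p_{\tau(\sigma)}$, which is equivalent to the more familiar expression $\sum_{\mu \vdash n} z_\mu^{-1}\psi(\mu) p_\mu$ since there are $n!/z_\mu$ permutations of cycle type $\mu$. Applying this to $\psi = \chi^{M\ind_H^{S_n}}$ and expanding the induced character as $\chi^{M\ind_H^{S_n}}(\sigma) = \frac{1}{|H|}\sum_{g \in S_n,\; g^{-1}\sigma g \in H}\chi^M(g^{-1}\sigma g)$ produces a double sum over pairs $(\sigma, g)$.

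The key step is to reindex this double sum by the pairs $(h, g) \in H \times S_n$ via $h = g^{-1}\sigma g$, i.e.\ $\sigma = ghg^{-1}$; this is a bijection between the two index sets, and under it $\tau(\sigma) = \tau(ghg^{-1}) = \tau(h)$ while $\chi^M(g^{-1}\sigma g) = \chi^M(h)$. Hence the summand no longer depends on $g$, the sum over $g \in S_n$ merely contributes a factor of $n!$ that cancels the $\tfrac{1}{n!}$, and one is left with $\Ch M\ind_H^{S_n} = \frac{1}{|H|}\sum_{h \in H}\chi^M(h)\, p_{\tau(h)}$. Grouping the terms according to the cycle type $\mu = \tau(h)$ and recognizing $c_\mu = \sum_{h \in H,\; \tau(h) = \mu}\chi^M(h)$ gives \eqref{eq:M_p}.

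For \eqref{eq:M_mult} I would invoke Frobenius reciprocity: $\langle M\ind_H^{S_n}, S^\lambda\rangle = \langle M, S^\lambda\res_H^{S_n}\rangle = \frac{1}{|H|}\sum_{h \in H}\chi^M(h)\,\overline{\chi^\lambda(h)}$. Since symmetric group characters are real (indeed integer) and depend only on cycle type, $\overline{\chi^\lambda(h)} = \chi^\lambda(\tau(h))$, and regrouping by cycle type again yields the asserted formula. (Equivalently, one could apply $\langle -, s_\lambda\rangle$ to \eqref{eq:M_p}, using that $\Ch$ is an isometry with $\Ch S^\lambda = s_\lambda$ and $\langle p_\mu, s_\lambda\rangle = \chi^\lambda(\mu)$.) I do not anticipate a genuine obstacle here; the only point that warrants a moment's care is verifying that the reindexing $(\sigma, g) \leftrightarrow (h, g)$ is a genuine bijection with no overcounting, which is immediate since $(h,g)$ determines $\sigma = ghg^{-1}$ and conversely $(\sigma, g)$ determines $h = g^{-1}\sigma g$.
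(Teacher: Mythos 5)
Your proof is correct and takes essentially the same approach as the paper's: expand $\Ch$ via the induced character formula and reorganize. The only cosmetic difference is bookkeeping—you reindex the double sum over $(\sigma,g)$ bijectively by $(h,g)$ with $h = g^{-1}\sigma g$, whereas the paper works one conjugacy class at a time and absorbs the $z_\mu$ multiplicity into the $1/z_\mu$ in $\Ch$—and for \eqref{eq:M_mult} you favor Frobenius reciprocity while the paper pairs \eqref{eq:M_p} against $s_\lambda$, though you note both.
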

  \begin{proof}
    Write $N \coloneqq M\ind_H^{S_n}$. By definition (see \cite[p.~351]{ec2}),
    \begin{equation}
      \label{eq:N_p}
      \Ch N = \sum_{\mu \vdash n} \frac{\chi^N(\mu)}{z_\mu} p_\mu
    \end{equation}
    where $z_\mu$ is the order of the stabilizer of any permutation of cycle type $\mu$
    under conjugation. From the induced character formula (see \cite[7.2, Prop.~20]{serre77}),
    we have
      \[ \chi^N(\sigma) = \frac{1}{|H|}
            \sum_{\substack{a \in S_n\\\text{s.t. }a \sigma a^{-1} \in H}}
              \chi^M(a \sigma a^{-1}). \]
    Say $\tau(\sigma) = \mu$. Each $a\sigma a^{-1} = h \in H$ with $\tau(h) = \mu$ appears
    in the preceding sum $z_\mu$ times, since $\sigma$ and $h$ are conjugate and
    $z_\mu$ is also the number of ways to conjugate any fixed permutation with cycle type
    $\mu$ to any other fixed permutation with cycle type $\mu$. Hence
    \begin{equation}
      \label{eq:N_sigma}
      \chi^N(\mu) = \frac{1}{|H|} \sum_{\substack{h \in H\\\tau(h)=\mu}}
            z_\mu \chi^M(h).
    \end{equation}
    Equation \eqref{eq:M_p} now follows from \eqref{eq:N_p} and \eqref{eq:N_sigma}.
    Equation \eqref{eq:M_mult} follows from \eqref{eq:M_p} in the usual way using the fact
    (see \cite[(7.76)]{ec2}) that $p_\mu = \sum_\lambda \chi^\lambda(\mu) s_\lambda$.
  \qed\end{proof}

Note that \eqref{eq:M_p} specializes to Theorem \ref{thm:chir_p} and \eqref{eq:M_mult}
specializes to Theorem \ref{thm:formula} when $M = \chi^r$. In that case, the only possibly
non-zero $c_\mu$ arise from $\mu = (\ell^{n/\ell})$ for $\ell \mid n$.

One may consider analogues of the counts $a_{\lambda, r}$ obtained by inducing other
one-dimensional representations of subgroups of $S_n$. Motivated by the study of so-called
higher Lie modules, there is a natural embedding of reflection groups $C_a \wr S_b
\hookrightarrow S_{ab}$. A classification analogous to Klyachko's result, Theorem
\ref{thm:kly}, was asserted for $b=2$ by Schocker \cite[Theorem 3.4]{schocker03},
though the ``rather lengthy proof'' making ``extensive use of routine applications of the
Littlewood-Richardson rule and some well-known results from the theory of plethysms'' was
omitted. By contrast, our approach using Theorem \ref{thm:char_formulas} may be pushed
through in this case using an appropriate generalization of the Fomin-Lulov
bound, such as \cite[Theorem 1.1]{ls08}, resulting in analogues
of Theorem \ref{thm:main} and Theorem \ref{thm:dist}. Our approach begins to break down
when $b$ is large relative to $n=ab$ and \eqref{eq:M_mult} has many terms. However, we
have no current need for such generalizations and so have not pursued them further.

\section{Proof of the Main Results}
\label{sec:thm_proofs}

We now turn to the proofs of Theorem \ref{thm:main}, Theorem \ref{thm:asymptotic},
and Theorem \ref{thm:dist}. We begin by combining the Fomin--Lulov bound and Stirling's
approximation, which quickly gives Theorem \ref{thm:asymptotic}. We then use somewhat
more careful estimates to give a sufficient condition, $f^\lambda \geq n^3$, for $a_{\lambda, r} \neq 0$. Afterwards we give an inequality between hook length products
and ``opposite'' hook length products, Proposition \ref{prop:h_op1}, from which we classify
$\lambda$ for which $f^\lambda < n^3$. Theorem \ref{thm:main} follows in
almost all cases, with the remainder being handled by brute force computer verification and
case-by-case analysis. Theorem \ref{thm:dist} will be similar, except the bound
$f^\lambda < n^5$ will be used.

\begin{lemma}
  \label{lem:FL_bound}
  Suppose $\lambda \vdash n = \ell s$. Then
  \begin{equation}
    \label{eq:fl_bound}
    \ln \frac{|\chi^\lambda(\ell^s)|}{f^\lambda}
      \leq \left(1-\frac{1}{\ell}\right) \left[\frac{1}{2} \ln n - \ln f^\lambda
            + \ln \sqrt{2\pi}\right] + \frac{\ell}{12n} - \frac{1}{2} \ln \ell.
  \end{equation}
\end{lemma}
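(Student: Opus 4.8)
The plan is to start from the Fomin--Lulov bound, Theorem~\ref{thm:fl_bound}, which states that
\[
  |\chi^\lambda(\ell^s)| \leq \frac{s!\,\ell^s}{(n!)^{1/\ell}}(f^\lambda)^{1/\ell}.
\]
Dividing by $f^\lambda$ and taking logarithms immediately gives
\[
  \ln\frac{|\chi^\lambda(\ell^s)|}{f^\lambda}
    \leq \ln s! + s\ln\ell - \tfrac{1}{\ell}\ln n! - \left(1-\tfrac{1}{\ell}\right)\ln f^\lambda.
\]
So everything reduces to estimating $\ln s! - \tfrac1\ell \ln n!$ from above in terms of the right-hand side of \eqref{eq:fl_bound}. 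For this I would invoke the standard two-sided Stirling bounds, namely that for all $m \geq 1$,
\[
  m\ln m - m + \tfrac12\ln m + \ln\sqrt{2\pi}
    \leq \ln m!
    \leq m\ln m - m + \tfrac12\ln m + \ln\sqrt{2\pi} + \tfrac{1}{12m}.
\]

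The key manipulation is then to apply the upper Stirling bound to $\ln s!$ and the lower Stirling bound to $\ln n!$ (since it enters with a negative sign). Writing $s = n/\ell$, the ``$m\ln m - m$'' pieces combine as
\[
  s\ln s - s - \tfrac1\ell(n\ln n - n) = \tfrac{n}{\ell}\ln\tfrac{n}{\ell} - \tfrac1\ell\, n\ln n = -\tfrac{n}{\ell}\ln\ell = -s\ln\ell,
\]
which exactly cancels the $+s\ln\ell$ term already present. What remains from Stirling are the lower-order terms: from $\ln s!$ we get $\tfrac12\ln s + \ln\sqrt{2\pi} + \tfrac{1}{12s}$, and from $-\tfrac1\ell\ln n!$ we get $-\tfrac{1}{2\ell}\ln n - \tfrac1\ell\ln\sqrt{2\pi}$. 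Now $\tfrac{1}{12s} = \tfrac{\ell}{12n}$, and $\tfrac12\ln s - \tfrac{1}{2\ell}\ln n = \tfrac12\ln\tfrac{n}{\ell} - \tfrac{1}{2\ell}\ln n = \tfrac12(1-\tfrac1\ell)\ln n - \tfrac12\ln\ell$, while $\ln\sqrt{2\pi} - \tfrac1\ell\ln\sqrt{2\pi} = (1-\tfrac1\ell)\ln\sqrt{2\pi}$. Collecting, the bound becomes
\[
  \ln\frac{|\chi^\lambda(\ell^s)|}{f^\lambda}
    \leq \left(1-\tfrac1\ell\right)\left[\tfrac12\ln n + \ln\sqrt{2\pi}\right] - \left(1-\tfrac1\ell\right)\ln f^\lambda + \tfrac{\ell}{12n} - \tfrac12\ln\ell,
\]
which is precisely \eqref{eq:fl_bound} after grouping the $\ln f^\lambda$ term into the bracket.

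Since this is an essentially mechanical bookkeeping argument, there is no serious obstacle — the only things to be careful about are (i) correctly matching the \emph{direction} of each Stirling inequality to the sign with which that factorial appears (upper bound on $\ln s!$, lower bound on $\ln n!$), so that the combined estimate remains an upper bound; and (ii) the algebraic verification that the $m\ln m - m$ terms cancel against $s\ln\ell$, which is where the $\ell$-divisibility $n = \ell s$ is used crucially. One should also note the degenerate case $\ell = n$, $s = 1$, where $\ln 1! = 0$ and the Stirling upper bound still holds trivially, so the argument goes through uniformly for all $\ell \mid n$ with $\ell \neq 1$ (and indeed for $\ell = 1$ as well, where both sides are $0$). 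I would present the proof as: quote Theorem~\ref{thm:fl_bound}, take logs, substitute the two Stirling bounds, and display the cancellation, ending with \qed.
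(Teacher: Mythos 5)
Your proof is correct and follows exactly the same route as the paper's: apply the Fomin--Lulov bound, take logarithms, use the upper Stirling bound for $\ln s!$ and the lower for $\ln n!$, and observe that the leading Stirling terms cancel against the $s\ln\ell$ contribution. One small slip in your closing parenthetical: at $\ell=1$ the right-hand side of \eqref{eq:fl_bound} is $\tfrac{1}{12n}$, not $0$, though the inequality of course still holds trivially there.
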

  \begin{proof}
    We apply the following version of Stirling's approximation \cite[(1.53)]{spencer14}.
    For all $m \in \bZ_{>0}$,
      \[ \left(m + \frac{1}{2}\right) \ln m - m + \ln \sqrt{2\pi}
          \leq \ln m! \leq \left(m + \frac{1}{2}\right) \ln m - m + \ln \sqrt{2\pi} +
          \frac{1}{12m}.
      \]
    The Fomin--Lulov bound, Theorem \ref{thm:fl_bound}, gives
    \begin{align*}
      \frac{|\chi^\lambda(\ell^s)|}{f^\lambda}
         \leq \frac{\frac{n}{\ell}! \ell^{n/\ell}}{(n!)^{1/\ell} (f^\lambda)^{1 - 1/\ell}}.
    \end{align*}
    Combining these gives
    \begin{align*}
      \ln \frac{|\chi^\lambda(\ell^s)|}{f^\lambda}
        &\leq \ln \left(\frac{n}{\ell}\right)! + \frac{n}{\ell} \ln \ell - \frac{1}{\ell} \ln n!
             - \left(1-\frac{1}{\ell}\right) \ln f^\lambda \\
        &\leq \left(\frac{n}{\ell} + \frac{1}{2}\right) \ln \frac{n}{\ell} - \frac{n}{\ell}
             + \ln \sqrt{2\pi} + \frac{\ell}{12 n} + \frac{n}{\ell} \ln \ell \\
        &\quad - \frac{1}{\ell}\left(\left(n + \frac{1}{2}\right) \ln n - n + \ln \sqrt{2\pi}\right)
             - \left(1 - \frac{1}{\ell}\right) \ln f^\lambda \\
        &= \frac{1}{2} \ln \frac{n}{\ell} + \ln \sqrt{2\pi} + \frac{\ell}{12n}
             - \frac{1}{2\ell} \ln n - \frac{\ln \sqrt{2\pi}}{\ell}
             - \left(1 - \frac{1}{\ell}\right) \ln f^\lambda.
    \end{align*}
    Rearranging this final expression gives \eqref{eq:fl_bound}.
  \qed\end{proof}

We may now prove Theorem \ref{thm:asymptotic}.

\begin{proof}[of Theorem \ref{thm:asymptotic}]
  For $\ell \leq 2 \leq n$, applying simple term-by-term estimates to \eqref{eq:fl_bound}
  gives
    \[ \ln\frac{|\chi^\lambda(\ell^s)|}{f^\lambda}
        \leq \frac{1}{2} \ln n - \frac{1}{2} \ln f^\lambda + \ln \sqrt{2\pi} + \frac{1}{12}
               - \frac{\ln 2}{2}. \]
  Consequently,
    \[ \frac{|\chi^\lambda(\ell^s)|}{f^\lambda} \leq C \sqrt{\frac{n}{f^\lambda}} \]
  where $C = \sqrt{\pi} \exp(1/12) \approx 1.93 < 2$. The Ramanujan sums
  $c_\ell(r)$ have the trivial bound $|c_\ell(r)| \leq \ell \leq n$. The estimate in
  Theorem \ref{thm:asymptotic} now follows immediately from Theorem \ref{thm:formula}.
\qed\end{proof}

\begin{lemma}
  \label{lem:phi_d}
  Pick $\lambda \vdash n$ and $d \in \bR$. Suppose for all $1 \neq \ell \mid n$ where
  $\lambda$ may be written as $s \coloneqq n/\ell$ successive ribbons each of length $\ell$ that
  \begin{equation}
    \label{eq:ratio_ub}
    \frac{|\chi^\lambda(\ell^s)|}{f^\lambda} \leq \frac{1}{n^d \phi(\ell)}.
  \end{equation}
  Then for all $r \in \bZ/n$,
    \[ \left|\frac{a_{\lambda, r}}{f^\lambda} - \frac{1}{n}\right| < \frac{1}{n^d}. \]
\end{lemma}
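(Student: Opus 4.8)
The plan is to start from the character formula, Theorem~\ref{thm:formula}, which expresses
\[
  \frac{a_{\lambda,r}}{f^\lambda}
    = \frac{1}{n} + \frac{1}{n} \sum_{\substack{\ell \mid n \\ \ell \neq 1}}
       \frac{\chi^\lambda(\ell^{n/\ell})}{f^\lambda} c_\ell(r),
\]
and then bound the error term by the triangle inequality. Subtracting $1/n$ and taking absolute values,
\[
  \left|\frac{a_{\lambda,r}}{f^\lambda} - \frac{1}{n}\right|
    \leq \frac{1}{n} \sum_{\substack{\ell \mid n \\ \ell \neq 1}}
       \frac{|\chi^\lambda(\ell^{n/\ell})|}{f^\lambda}\, |c_\ell(r)|.
\]
Two reductions handle the terms in this sum. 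First, by Theorem~\ref{thm:hook}, $\chi^\lambda(\ell^{n/\ell}) = 0$ unless the $\ell$-core of $\lambda$ is empty, i.e.\ unless $\lambda$ can be written as $n/\ell$ successive ribbons of length $\ell$; so the only surviving terms are exactly those for which hypothesis \eqref{eq:ratio_ub} applies. Second, the Ramanujan sum satisfies the trivial bound $|c_\ell(r)| \le \phi(\ell)$, since $c_\ell(r)$ is a sum of $\phi(\ell)$ roots of unity (this is immediate from the definition of $c_\ell$ as a sum of $s$th powers of the primitive $\ell$th roots of unity). Combining the vanishing, the hypothesis \eqref{eq:ratio_ub}, and this bound on $|c_\ell(r)|$, each surviving summand is at most $\frac{1}{n^d \phi(\ell)} \cdot \phi(\ell) = \frac{1}{n^d}$.

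The next step is to count the surviving terms. Let $k$ be the number of $\ell \mid n$ with $\ell \neq 1$ for which \eqref{eq:ratio_ub} holds; certainly $k \leq \#\{\ell \mid n : \ell \neq 1\} = \tau(n) - 1$, where $\tau(n)$ is the number of positive divisors of $n$. Then
\[
  \left|\frac{a_{\lambda,r}}{f^\lambda} - \frac{1}{n}\right|
    \leq \frac{1}{n} \cdot k \cdot \frac{1}{n^d}
    \leq \frac{\tau(n) - 1}{n} \cdot \frac{1}{n^d}.
\]
So it suffices to show $\tau(n) - 1 < n$, equivalently $\tau(n) \le n$, which is elementary: every divisor of $n$ lies in $\{1, \ldots, n\}$, and moreover the divisors $1$ and (if $n > 1$) nothing forces equality, but in fact $\tau(n) \le n$ always holds and is strict for $n \ge 2$ since, e.g., $n-1$ is not a divisor of $n$ when $n \ge 3$, and the cases $n = 1, 2$ are checked directly (for $n=1$ the sum over $\ell \ne 1$ is empty, so the left side is $0$). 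This gives $\left|\frac{a_{\lambda,r}}{f^\lambda} - \frac{1}{n}\right| < \frac{1}{n^d}$, as claimed.

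I do not expect any serious obstacle here; the lemma is essentially a bookkeeping consequence of Theorem~\ref{thm:formula}, the vanishing criterion of Theorem~\ref{thm:hook}, and two crude bounds ($|c_\ell(r)| \le \phi(\ell)$ and the count of nontrivial divisors). The only point that needs a moment's care is making sure the hypothesis \eqref{eq:ratio_ub} is applied only to those $\ell$ where it is assumed — which is exactly the set of $\ell$ for which the character does not vanish — so that the unhypothesized divisors contribute nothing. If one wanted the strict inequality to be transparent even when $n$ is prime (so $\tau(n) - 1 = 1$), note $1/n < 1$ suffices; and when the surviving set is empty the bound is trivially $0 < n^{-d}$. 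Thus the strictness is never in danger.
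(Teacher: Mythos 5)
Your proof is correct and follows essentially the same route as the paper's: expand $a_{\lambda,r}/f^\lambda - 1/n$ via Theorem~\ref{thm:formula}, bound each Ramanujan sum by $|c_\ell(r)| \le \phi(\ell)$, apply hypothesis~\eqref{eq:ratio_ub} to each surviving term, and use the fact that $n$ has fewer than $n$ divisors $\ell \neq 1$. The one place you are slightly more explicit than the paper is in invoking Theorem~\ref{thm:hook} (equivalently the Murnaghan--Nakayama rule) to justify discarding the divisors $\ell$ not covered by the hypothesis; the paper leaves the vanishing of $\chi^\lambda(\ell^{n/\ell})$ for those $\ell$ implicit, but the logic is the same.
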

  \begin{proof}
    By Theorem \ref{thm:formula}, we must show
      \[ \frac{1}{n} \left|\sum_{\substack{\ell \mid n \\ \ell \neq 1}}
          \frac{\chi^\lambda(\ell^s)}{f^\lambda} c_\ell(r)\right|
          < \frac{1}{n^d}. \]
    Using the explicit form for $c_\ell(r)$ in Theorem \ref{thm:formula} and the
    fact that $n$ has fewer than $n$ proper divisors, it suffices to show
      \[ \left|\frac{\chi^\lambda(\ell^s)}{f^\lambda} \phi(\ell)\right|
          \leq \frac{1}{n^d} \]
    for all $\ell \mid n$, $\ell \neq 1$, so the result follows from our assumption
    \eqref{eq:ratio_ub}.
  \qed\end{proof}

\begin{corollary}
  \label{cor:n_cubed}
  Let $\lambda \vdash n$. If $f^\lambda \geq n^3 \geq 1$, then
  $a_{\lambda,r} \neq 0$.
\end{corollary}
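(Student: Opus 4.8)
The plan is to deduce this directly from Lemma \ref{lem:phi_d} with the parameter choice $d = 2$, combined with the Fomin--Lulov-based estimate packaged in Lemma \ref{lem:FL_bound}. First I would verify the hypothesis \eqref{eq:ratio_ub} of Lemma \ref{lem:phi_d}: for each $1 \neq \ell \mid n$ for which $\lambda$ decomposes into $s = n/\ell$ ribbons of length $\ell$ (the only $\ell$ contributing a nonzero character, hence the only ones that matter), I must show
\[
  \frac{|\chi^\lambda(\ell^s)|}{f^\lambda} \leq \frac{1}{n^2 \phi(\ell)}.
\]
Taking logarithms and using $\phi(\ell) \leq \ell$, it suffices to show
\[
  \ln \frac{|\chi^\lambda(\ell^s)|}{f^\lambda} \leq -2\ln n - \ln \ell.
\]
Now feed in Lemma \ref{lem:FL_bound}. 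Its right-hand side is
\[
  \left(1 - \tfrac{1}{\ell}\right)\left[\tfrac{1}{2}\ln n - \ln f^\lambda + \ln\sqrt{2\pi}\right] + \tfrac{\ell}{12n} - \tfrac{1}{2}\ln\ell.
\]
Using $f^\lambda \geq n^3$ we have $\tfrac12\ln n - \ln f^\lambda + \ln\sqrt{2\pi} \leq \tfrac12\ln n - 3\ln n + \ln\sqrt{2\pi} = -\tfrac52\ln n + \ln\sqrt{2\pi}$, which is negative once $n$ is moderately large; and $1 - \tfrac1\ell \geq \tfrac12$ since $\ell \geq 2$. Also $\tfrac{\ell}{12n} \leq \tfrac{1}{12}$ since $\ell \leq n$. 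So the bound becomes at most
\[
  \tfrac{1}{2}\left(-\tfrac{5}{2}\ln n + \ln\sqrt{2\pi}\right) + \tfrac{1}{12} - \tfrac{1}{2}\ln\ell
  = -\tfrac{5}{4}\ln n + \tfrac{1}{2}\ln\sqrt{2\pi} + \tfrac{1}{12} - \tfrac{1}{2}\ln\ell,
\]
and I need this to be $\leq -2\ln n - \ln\ell$, i.e.\ $\tfrac{3}{4}\ln n + \tfrac{1}{2}\ln\ell + \tfrac{1}{2}\ln\sqrt{2\pi} + \tfrac{1}{12} \leq 0$, which is false for large $n$. So the crude bookkeeping above is too lossy and must be tightened; this is the main obstacle.

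The fix is to be more careful with the $\ln f^\lambda$ term: rather than bounding $\left(1-\tfrac1\ell\right)(-\ln f^\lambda)$ by $-\tfrac12\ln f^\lambda$, I should retain enough of it to beat the positive terms. Since $f^\lambda \geq n^3$, the term $-\left(1-\tfrac1\ell\right)\ln f^\lambda \leq -3\left(1-\tfrac1\ell\right)\ln n$, and $3\left(1-\tfrac1\ell\right) \geq \tfrac32$. Keeping this against the modest positive contributions $\left(1-\tfrac1\ell\right)\tfrac12\ln n \leq \tfrac12\ln n$ and $\left(1-\tfrac1\ell\right)\ln\sqrt{2\pi} \leq \ln\sqrt{2\pi}$ and $\tfrac{\ell}{12n}\leq\tfrac{1}{12}$, the left side of \eqref{eq:fl_bound} is at most $\tfrac12\ln n - 3\left(1-\tfrac1\ell\right)\ln n + \ln\sqrt{2\pi} + \tfrac{1}{12} - \tfrac12\ln\ell$. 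To land below $-2\ln n - \ln\ell$ I would split into the cases $\ell = 2$ (where $3(1-\tfrac1\ell) = \tfrac32$, so I get $\tfrac12\ln n - \tfrac32\ln n + \cdots = -\ln n + \cdots$, still not enough — so here I must use a better lower bound on $f^\lambda$ available when $\ell = 2$, or simply handle small $n$ by hand) and $\ell \geq 3$ (where the coefficient $3(1-\tfrac1\ell)\geq 2$ already gives $\tfrac12\ln n - 2\ln n = -\tfrac32\ln n$, leaving room). Alternatively, and more cleanly, I would just run the estimate from the proof of Theorem \ref{thm:asymptotic} directly: there it is shown $|\chi^\lambda(\ell^s)|/f^\lambda \leq C\sqrt{n/f^\lambda}$ with $C < 2$ (this already absorbed the $\ell$-dependence favorably), so with $f^\lambda \geq n^3$ we get $|\chi^\lambda(\ell^s)|/f^\lambda \leq 2\sqrt{n/n^3} = 2/n$. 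Then $|\chi^\lambda(\ell^s)|\phi(\ell)/f^\lambda \leq 2\phi(\ell)/n \leq 2n/n \cdot \tfrac1{?}$ — this needs one more factor of $n$, so I would instead apply Lemma \ref{lem:phi_d} with $d=1$ after checking $|\chi^\lambda(\ell^s)|\phi(\ell)/f^\lambda \leq 1/n$, which requires $2\phi(\ell)/n \leq 1/n$, i.e.\ $\phi(\ell)\leq \tfrac12$ — false. The honest conclusion is that one must keep more of the $\ln f^\lambda$ savings, as in the first approach.

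I therefore expect the clean writeup to go: apply Lemma \ref{lem:FL_bound}, substitute $\ln f^\lambda \geq 3\ln n$, carefully collect terms keeping the full coefficient $\left(1-\tfrac1\ell\right)$ on both the $\tfrac12\ln n$ and the $-\ln f^\lambda$ pieces so they combine into $\left(1-\tfrac1\ell\right)\!\left(-\tfrac52\ln n\right) \leq -\tfrac{5}{4}\ln n$, then check
\[
  -\tfrac{5}{4}\ln n + \ln\sqrt{2\pi} + \tfrac{1}{12} - \tfrac{1}{2}\ln\ell \leq -2\ln n - \ln\ell
\]
reduces to $\tfrac{3}{4}\ln n + \tfrac12\ln\ell \leq -\ln\sqrt{2\pi} - \tfrac{1}{12}$, still false — confirming that $d=2$ cannot be reached by this route and that the right target for Lemma \ref{lem:phi_d} here is $d=1$, not $d=2$, with the nonvanishing $a_{\lambda,r} > 0$ then following from $a_{\lambda,r}/f^\lambda > \tfrac1n - \tfrac1n = 0$. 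Concretely: apply Lemma \ref{lem:phi_d} with $d = 1$; its hypothesis $|\chi^\lambda(\ell^s)|/f^\lambda \leq 1/(n\phi(\ell))$ follows from Lemma \ref{lem:FL_bound} and $f^\lambda \geq n^3$ after keeping the $-\left(1-\tfrac1\ell\right)\ln f^\lambda \leq -2\ln n$ term intact (using $3\left(1-\tfrac1\ell\right)\geq 2$) against $\left(1-\tfrac1\ell\right)\tfrac12\ln n \leq \tfrac12\ln n$, yielding an upper bound $\leq -\tfrac32\ln n + \ln\sqrt{2\pi} + \tfrac1{12} - \tfrac12\ln\ell \leq -\ln n - \ln\ell$ for $n$ beyond a small explicit threshold, with the finitely many small $n$ checked directly; then Lemma \ref{lem:phi_d} gives $|a_{\lambda,r}/f^\lambda - 1/n| < 1/n$, hence $a_{\lambda,r}/f^\lambda > 0$, so $a_{\lambda,r} \neq 0$. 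The main obstacle, then, is bookkeeping discipline — retaining exactly the right portion of the $\ln f^\lambda$ gain so that the $\tfrac12\ln n$, $\ln\sqrt{2\pi}$, $\tfrac1{12}$, and $\tfrac12\ln\ell$ terms are all dominated — together with dispatching the small-$n$ residue, which I would do by direct computation.
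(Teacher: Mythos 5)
Your proposal correctly identifies that $d=2$ is out of reach from $f^\lambda \geq n^3$ and that the right move is Lemma \ref{lem:phi_d} with $d=1$, and this is indeed the route the paper takes. However, the concrete bound you propose for verifying the hypothesis \eqref{eq:ratio_ub} with $d=1$ has a genuine gap, and it is not one that can be fixed by taking $n$ large or by handling finitely many small cases by hand.

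The problem is in the final chain of inequalities. You replace
\[
  \Bigl(1-\tfrac1\ell\Bigr)\tfrac12\ln n \;\leq\; \tfrac12\ln n
  \qquad\text{and}\qquad
  -\Bigl(1-\tfrac1\ell\Bigr)\ln f^\lambda \;\leq\; -2\ln n
\]
and arrive at the bound $-\tfrac32\ln n + \ln\sqrt{2\pi} + \tfrac1{12} - \tfrac12\ln\ell$, which you then claim is $\leq -\ln n - \ln\ell$ ``for $n$ beyond a small explicit threshold.'' But this last step is not a small-$n$ issue: rearranged, it demands
\[
  \tfrac12\ln\ell \;\leq\; \tfrac12\ln n - \ln\sqrt{2\pi} - \tfrac1{12},
\]
i.e.\ $\ell \leq n/(2\pi e^{1/6}) \approx n/7.4$, which fails whenever $\ell$ is large relative to $n$ (for instance $\ell = n$ when $n$ is prime, or $\ell \in \{n/2,\dots,n/7\}$ when those divide $n$). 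The source of the loss is that your two replacements of $(1-\tfrac1\ell)$ are worst-case in opposite regimes: you bound $(1-\tfrac1\ell)$ above by $1$ on the positive $\tfrac12\ln n$ term (worst for $\ell$ large) but bound $3(1-\tfrac1\ell)$ below by $2$ on the negative term (worst for $\ell = 3$), so there is no single $\ell$ where both bounds are tight, and the combined estimate is strictly worse than what Lemma \ref{lem:FL_bound} actually gives for large $\ell$. (Also, the substitution $3(1-\tfrac1\ell) \geq 2$ already excludes $\ell = 2$, which you acknowledge but do not resolve.)

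The paper avoids this by \emph{not} splitting the $(1-\tfrac1\ell)$ coefficient: substituting $f^\lambda \geq n^3$ into \eqref{eq:fl_bound} yields
\[
  \ln\frac{|\chi^\lambda(\ell^s)|}{f^\lambda}
    \;\leq\; \Bigl(1-\tfrac1\ell\Bigr)\Bigl[-\tfrac52\ln n + \ln\sqrt{2\pi}\Bigr]
               + \tfrac{\ell}{12n} - \tfrac12\ln\ell,
\]
and since the bracket is negative, the coefficient $(1-\tfrac1\ell)$ helps more as $\ell$ grows — exactly the regime where you need to beat $-\ln n - \ln\phi(\ell) \approx -2\ln n$. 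The paper then checks $\ell = 2,3,4,5$ individually against the target $\ln\tfrac1{\phi(\ell)n}$, and for $\ell \geq 6$ uses $(1-\tfrac1\ell) \geq \tfrac56$ and targets the stronger $\ln\tfrac1{n^2}$ (which implies $\leq \ln\tfrac1{n\phi(\ell)}$ because $\phi(\ell) < \ell \leq n$). This gives the hypothesis of Lemma \ref{lem:phi_d} with $d=1$ uniformly for $n \geq 5$, with $n \leq 4$ handled by inspection. Your plan is the right skeleton, but you would need to keep $(1-\tfrac1\ell)$ attached to the whole bracket and split into small-$\ell$ versus large-$\ell$ cases as the paper does.
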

  \begin{proof}
    Equation \eqref{eq:fl_bound} gives
    \begin{align}
      \label{eq:char_ratio}
        \ln \frac{|\chi^\lambda(\ell^s)|}{f^\lambda}
          &\leq \left(1-\frac{1}{\ell}\right) \left[-\frac{5}{2} \ln n + \ln \sqrt{2\pi}\right]
             + \frac{\ell}{12n} - \frac{1}{2} \ln \ell.
    \end{align}
    At $\ell=2$, the right-hand side of \eqref{eq:char_ratio} is less than $\ln\frac{1}{\phi(2)n}$
    for $n \geq 3$. At $\ell=3, 4, 5$, the same expression is less than
    $\ln\frac{1}{\phi(\ell) n}$ for $n \geq 4, 3, 5$, respectively. At $\ell \geq 6$, applying
    simple term-by-term estimates to \eqref{eq:char_ratio} gives
    \begin{equation}
      \label{eq:char_ratio2}
      \ln \frac{|\chi^\lambda(\ell^s)|}{f^\lambda} \leq
         -\left(1 - \frac{1}{6}\right) \frac{5}{2} \ln n
         + \ln \sqrt{2\pi} + \frac{1}{12} - \frac{1}{2} \ln 6
    \end{equation}
    which is less than $\ln\frac{1}{n^2}$ for $n \geq 4$. Thus, 
    Lemma \ref{lem:phi_d} applies with $d=1$ for all $n \geq 5$, so that
      \[ \left|\frac{a_{\lambda, r}}{f^\lambda} - \frac{1}{n}\right| < \frac{1}{n}, \]
    and in particular $a_{\lambda, r} \neq 0$. The cases $1 \leq n \leq 4$ remain,
    but they may be easily checked by hand.
  \qed\end{proof}

We next give techniques that are well-adapted to classifying $\lambda \vdash n$
for which $f^\lambda < n^d$ for fixed $d$. We begin with a curious observation,
Proposition \ref{prop:h_op1}, which is similar in flavor to \cite[Theorem~2.3]{fl95}.
It was also recently discovered independently by Morales--Panova--Pak as a
corollary of the Naruse hook length formula for skew shapes; see
\cite[Proposition~12.1]{mpp17}. See also \cite{243846} for further discussion and an
alternate proof of a stronger result by F.~Petrov.

\begin{definition}
  Consider a partition $\lambda = (\lambda_1, \ldots, \lambda_m)$ with
  $\lambda_1 \geq \lambda_2 \geq \cdots \geq 0$ as a set of cells (in French notation)
    \[ \lambda = \{(a, b) \in \bZ \times \bZ : 1 \leq b \leq m, 1 \leq a \leq \lambda_b\}. \]
  Given a cell $c = (a, b) \in \lambda \subset \bN \times \bN$, the
  \textit{opposite hook length} $h_c^{\op}$ at $c$ is $a+b-1$. For instance,
  the unique cell in $\lambda = (1)$ has opposite hook length $1$, and the opposite hook
  length increases by $1$ for each north or east step.
\end{definition}

It is easy to see that $\sum_{c \in \lambda} h_c^{\op} = \sum_{c \in \lambda} h_c$.
On the other hand, we have the following inequality for their products.

\begin{proposition}
  \label{prop:h_op1}
  For all partitions $\lambda$,
    \[ \prod_{c \in \lambda} h_c^{\op} \geq \prod_{c \in \lambda} h_c. \]
  Moreover, equality holds if and only if $\lambda$ is a rectangle.
\end{proposition}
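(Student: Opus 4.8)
The easy half is the claim for rectangles: if $\lambda=(\ell^{k})$ then the $180^{\circ}$ rotation of the Ferrers diagram sends a cell $(a,b)$ to $(\ell+1-a,\,k+1-b)$ and carries $h_{(a,b)}=(\ell-a)+(k-b)+1$ to $h^{\op}_{(\ell+1-a,\,k+1-b)}=(\ell+1-a)+(k+1-b)-1=h_{(a,b)}$, so $\{h_{c}\}_{c\in\lambda}=\{h^{\op}_{c}\}_{c\in\lambda}$ as multisets and the two products agree. The plan for the inequality (and the assertion that equality forces a rectangle) is induction on $n=|\lambda|$, peeling off the first column. Let $m$ be the number of rows of $\lambda$, let $p$ be the number of parts equal to $1$, put $q:=m-p$, and let $\hat\lambda$ be $\lambda$ with its first column removed, so $\hat\lambda$ has $q$ rows and $|\hat\lambda|=n-m<n$.

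First I would record how the two products behave under this deletion. Deleting the first column alters no arm and no leg of a surviving cell, so $\prod_{c\in\lambda}h_{c}=\bigl(\prod_{b=1}^{m}\beta_{b}\bigr)\prod_{c\in\hat\lambda}h_{c}$, where $\beta_{b}:=\lambda_{b}+m-b$ is the hook length of $(1,b)$. On the opposite side the first-column opposite hook lengths are $1,2,\dots,m$, while deletion lowers every other opposite hook length by exactly $1$, so $\prod_{c\in\lambda}h^{\op}_{c}=m!\prod_{c\in\hat\lambda}\!\bigl(h^{\op}_{c}(\hat\lambda)+1\bigr)$. Row by row one finds $\prod_{c\in\hat\lambda}(h^{\op}_{c}(\hat\lambda)+1)\big/\prod_{c\in\hat\lambda}h^{\op}_{c}(\hat\lambda)=\prod_{b=1}^{q}\frac{\lambda_{b}+b-1}{b}$ (the $b$th row contributes $\frac{(b+1)\cdots(\lambda_{b}+b-1)}{b\cdots(\lambda_{b}+b-2)}=\frac{\lambda_{b}+b-1}{b}$), and $\prod_{b=1}^{m}\beta_{b}=p!\prod_{b=1}^{q}(\lambda_{b}+m-b)$ because the tail contributes $\prod_{b=q+1}^{m}(m+1-b)=p!$. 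Feeding these into the inductive hypothesis $\prod_{c\in\hat\lambda}h^{\op}_{c}\ge\prod_{c\in\hat\lambda}h_{c}$ yields
\[
  \frac{\prod_{c\in\lambda}h^{\op}_{c}}{\prod_{c\in\lambda}h_{c}}
  \;\ge\;\frac{m!}{\prod_{b=1}^{m}\beta_{b}}\prod_{b=1}^{q}\frac{\lambda_{b}+b-1}{b}
  \;=\;\binom{m}{p}\prod_{b=1}^{q}\frac{\lambda_{b}+b-1}{\lambda_{b}+m-b}
  \;=\;\prod_{b=1}^{m}\frac{\lambda_{b}+b-1}{\lambda_{b}+m-b},
\]
the last step using $\prod_{b=q+1}^{m}\frac{b}{m+1-b}=\binom{m}{p}$ (the tail parts all equal $1$).

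Thus the whole proposition reduces to the self-contained inequality $\prod_{b=1}^{m}\frac{\lambda_{b}+b-1}{\lambda_{b}+m-b}\ge 1$, i.e.\ $\prod_{b=1}^{m}\frac{\lambda_{b}+m-b}{\lambda_{b}+b-1}\le 1$, for every partition $\lambda$ with exactly $m$ parts. This I would prove by pairing $b$ with $m+1-b$: for $b\le (m+1)/2$ we have $\lambda_{b}\ge\lambda_{m+1-b}$, and a one-line computation gives
\[
  \frac{\lambda_{b}+m-b}{\lambda_{b}+b-1}\cdot\frac{\lambda_{m+1-b}+b-1}{\lambda_{m+1-b}+m-b}-1
  \;=\;\frac{(\lambda_{b}-\lambda_{m+1-b})(2b-1-m)}{(\lambda_{b}+b-1)(\lambda_{m+1-b}+m-b)}\;\le\;0,
\]
so each pair contributes a factor $\le 1$ (a self-paired central index, for $m$ odd, contributes exactly $1$), and the product is $\le 1$. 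It equals $1$ exactly when every pair is sharp, i.e.\ $\lambda_{b}=\lambda_{m+1-b}$ for all $b$, which for a weakly decreasing $\lambda$ forces $\lambda$ to be a rectangle. Hence if $\lambda$ is not a rectangle this final factor is $>1$ while the inductive factor is $\ge 1$, so the inequality is strict; together with the rotation argument this pins equality down to exactly the rectangles.

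The only step that needs real care is the bookkeeping in the reduction — confirming the two product identities under column deletion, the evaluations $\prod_{b=q+1}^{m}(m+1-b)=p!$ and $\prod_{b=q+1}^{m}\frac{b}{m+1-b}=\binom{m}{p}$, and the base and degenerate cases ($\lambda=\varnothing$, and $\hat\lambda=\varnothing$ when $\lambda=(1^{m})$). The pairing identity is routine once the reduction is in place, so I expect no genuinely hard step, only a moderately intricate one. (An alternative route avoids the induction: since $\sum_{c}h_{c}=\sum_{c}h^{\op}_{c}$, Karamata's inequality applied to $-\ln$ reduces the claim to showing that $\{h_{c}\}$ majorizes $\{h^{\op}_{c}\}$, which one can attempt from the row descriptions $\{h^{\op}_{c}:c\text{ in row }b\}=\{b,\dots,\lambda_{b}+b-1\}$ and $\{h_{c}:c\text{ in row }b\}=\{1,\dots,\beta_{b}\}\setminus\{\beta_{b}-\beta_{b'}:b'>b\}$; but the resulting threshold checks look no shorter than the argument above.)
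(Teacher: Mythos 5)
Your argument is correct, and it reaches the result by a genuinely different route than the paper. The paper proves a small rearrangement lemma --- for weakly decreasing nonnegative sequences $x_i$, $y_i$, one has $\prod_i(x_i+y_i)\leq\prod_i(x_i+y_{m-i+1})$ --- and then applies it directly, column by column, after re-expressing $h_c^{\op}=a+b-1$ and $h_c=\alpha+\beta-1$ in terms of (co-)arm and (co-)leg lengths; the argument is noninductive and handles all cells at once. You instead induct on $|\lambda|$ by stripping the first column, which yields the exact identity
\[
  \frac{\prod_{c\in\lambda}h^{\op}_{c}}{\prod_{c\in\lambda}h_{c}}
  \;=\;\frac{\prod_{c\in\hat\lambda}h^{\op}_{c}}{\prod_{c\in\hat\lambda}h_{c}}
  \cdot\prod_{b=1}^{m}\frac{\lambda_{b}+b-1}{\lambda_{b}+m-b},
\]
reducing the proposition to showing the second factor is $\geq 1$ with equality exactly for rectangles --- and that you prove by pairing $b$ with $m+1-b$, which is in effect the same $2\times 2$ rearrangement computation the paper's lemma rests on, just applied to a single cleaner product. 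I checked the bookkeeping (hook lengths of surviving cells are unchanged, $\prod_{b=1}^{m}\beta_b=p!\prod_{b=1}^{q}(\lambda_b+m-b)$, the telescoping $\prod(h^{\op}+1)/\prod h^{\op}$ along rows, and $\prod_{b=q+1}^{m}b/(m+1-b)=\binom{m}{p}$) and it all goes through, including the degenerate cases $\hat\lambda=\varnothing$ and $m=1$. The paper's route is a bit shorter once its lemma is in hand; yours is slightly longer but produces a useful intermediate identity isolating the first-column contribution, which could support finer estimates of the ratio.
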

  \begin{proof}
    If $\lambda$ is a rectangle, the multisets $\{h_c^{\op}\}$ and $\{h_c\}$ are equal,
    so the products agree. The converse will be established in the course of proving the
    inequality. For that, we begin with a simple lemma.
    \begin{lemma}
      \label{lem:prod_switch}
      Let $x_1 \geq \cdots \geq x_m \geq 0$ and
      $y_1 \geq \cdots \geq y_m \geq 0$ be real numbers. Then
        \[ \prod_{i=1}^m (x_i + y_i) \leq \prod_{i=1}^m (x_i + y_{m-i+1}). \]
      Moreover, equality holds if and only if for all $i$ either $x_i = x_{m-i+1}$ or
      $y_i = y_{m-i+1}$.
    \end{lemma}
      \begin{proof}
        If $m=1$, the result is trivial. If $m=2$, we compute
          \[ (x_1 + y_2)(x_2 + y_1) - (x_1 + y_1)(x_2 + y_2)
              =  (x_1 - x_2) (y_1 - y_2) \geq 0. \]
        The result follows in general by pairing terms $i$ and $m-i+1$ and using these base cases.
      \qed\end{proof}
    
    \begin{figure}[b]
      \centering
      \includegraphics[scale=1]{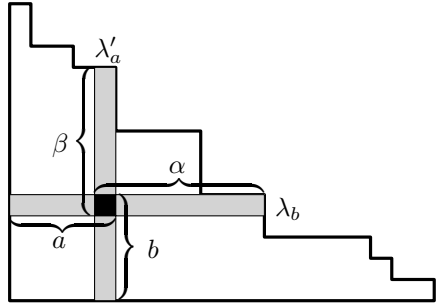}
      \caption{Arm length $\alpha$, co-arm length $a$, leg length $\beta$, co-leg length $b$
        for $c = (a, b) \in \lambda$. The hook length is $h_c = \alpha+\beta-1$ and the opposite
        hook length is $h_c^{\op} = a+b-1$}
      \label{fig:hooks}
    \end{figure}
    
    Returning to the proof of the proposition, the strategy will be to break up $h_c$ and
    $h_c^{\op}$ in terms of (co-)arm and (co-)leg lengths, and apply the lemma to each
    column of $\lambda$ when computing $\prod h_c$, or equivalently to each row of
    $\lambda$ when computing $\prod h_c^{\op}$. More precisely, let
    $c = (a, b) \in \lambda$. Take $\lambda = (\lambda_1, \lambda_2, \ldots)$ and
    $\lambda' = (\lambda_1', \lambda_2', \ldots)$. Define the \textit{co-arm length} of $c$
    as $a$, the \textit{co-leg length} of $c$ as $b$, the \textit{arm length} of $c$ as
    $\alpha \coloneqq \alpha(a, b) \coloneqq \lambda_b - a + 1$, and the \textit{leg length} of $c$ as
    $\beta \coloneqq \beta(a, b) \coloneqq \lambda_a' - b + 1$; see Figure \ref{fig:hooks}. With these
    definitions, we have $h_c^{\op} = a + b - 1$ and $h_c = \alpha + \beta - 1$. We now
    compute
    \begin{align*}
      \prod_{c \in \lambda} h_c^{\op}
        &= \prod_{(a, b) \in \lambda} (a+b-1)
          = \prod_b \prod_{a=1}^{\lambda_b} (a+b-1) \\
        &= \prod_b \prod_{a=1}^{\lambda_b} ((\lambda_b + 1 - a) + b - 1)
          = \prod_a \prod_{b=1}^{\lambda_a'} (\alpha + b - 1) \\
        &\geq \prod_a \prod_{b=1}^{\lambda_a'}
           (\alpha + (\lambda_a' + 1 - b) - 1) \\
        &= \prod_{(a, b) \in \lambda} (\alpha + \beta - 1)
         = \prod_{c \in \lambda} h_c,
    \end{align*}
    where Lemma \ref{lem:prod_switch} is used for the inequality with $i \coloneqq b$,
    $m \coloneqq \lambda_a'$, $x_i \coloneqq \alpha - 1 = \lambda_b - a$, $y_i \coloneqq \lambda_a' + 1 - b$. 
    Moreover, if equality occurs, then since the $y_i$ strictly decrease, we must have
    $\lambda_1 = \lambda_m$ for all $a$, forcing $\lambda$ to be a rectangle.
  \qed\end{proof}

It would be interesting to find a bijective explanation for Proposition \ref{prop:h_op1}.
The appearance of rectangles is particularly striking. Note,
however, that $n!/\prod_{c \in \lambda} h_c^{\op}$ need not be an integer. In any case, we
continue towards Theorem \ref{thm:main}.

\begin{definition}
  Define the \textit{diagonal preorder} on partitions as follows. Declare $\lambda 
  \lesssim^{\diag} \mu$ if and only if for all $i \in \bP$,
    \[ \#\{c \in \lambda : h_c^{\op} \geq i\} \leq \#\{d \in \mu : h_d^{\op} \geq i\}. \]
\end{definition}

Note that $\lesssim^{\diag}$ is reflexive and transitive, though not anti-symmetric, so the
diagonal preorder is not a partial order. For example, the partitions $(3, 1)$, $(2, 2)$, and
$(2, 1, 1)$ all have the same number of cells with each opposite hook length.
A straightforward consequence of the definition is that
\begin{equation}
  \label{eq:diag_preorder}
  \lambda \lesssim^{\diag} \mu \qquad \Rightarrow \qquad
      \prod_{c \in \lambda} h_c^{\op} \leq \prod_{d \in \mu} h_d^{\op}.
\end{equation}
Hooks are maximal elements of the diagonal preorder in a sense we next make precise.

\begin{definition}
  \label{def:diagonal_excess}
  Let $\lambda \vdash n$ for $n \geq 1$. The \textit{diagonal excess} of $\lambda$
  is
    \[ N(\lambda) \coloneqq |\lambda| - \max_{c \in \lambda} h_c^{\op}. \]
\end{definition}

\noindent For instance, $\lambda = (3, 3)$ has opposite hook lengths ranging from $1$ to 
$4$, so $N((3, 3)) = 6 - 4 = 2$.

The following simple observation will be used shortly.

\begin{proposition}
  \label{prop:diagonal_unimodality}
  Let $\lambda \vdash n$ for $n \geq 1$. Take $\pi \colon \lambda \to \bP$ via
  $\pi(c) \coloneqq h_c^{\op}$. Then the fiber sizes $|\pi^{-1}(i)|$ are unimodal, and are indeed of
  the form
    \[ 1 = |\pi^{-1}(1)| < \cdots < m = |\pi^{-1}(m)| \geq |\pi^{-1}(m+1)| \geq \cdots \]
  for some unique $m \geq 1$.
\end{proposition}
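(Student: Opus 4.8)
The plan is to understand the fibers $\pi^{-1}(i)$ very concretely. A cell $c = (a,b) \in \lambda$ satisfies $h_c^{\op} = i$ precisely when $a + b - 1 = i$, i.e.\ $c$ lies on the $i$th antidiagonal. So $|\pi^{-1}(i)|$ is exactly the number of cells of $\lambda$ on the antidiagonal $\{(a,b) : a+b = i+1,\ a,b \geq 1\}$. The first step is therefore to record the closed form
\[
  |\pi^{-1}(i)| = \#\{b : 1 \leq b,\ \lambda_b \geq i+1-b\} = \#\{b : 1 \leq b \leq i,\ \lambda_b \geq i+1-b\},
\]
the upper bound $b \leq i$ coming from $\lambda_b \geq i+1-b \geq 1$. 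In particular $|\pi^{-1}(1)| = 1$ (only the corner cell), and $|\pi^{-1}(i)| = 0$ for $i$ larger than the maximum opposite hook length, so the sequence is eventually $0$; since the fiber sizes sum to $n \geq 1$, the sequence is not identically zero, so a maximum $m$ is attained and $m \geq 1$.

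For the unimodality and the strict-increase-up-to-$m$ claims, the key step is to compare consecutive fibers by exhibiting an injection. Given a cell $c = (a,b)$ on antidiagonal $i$ (so $a+b = i+1$), consider the cell $c^- := (a-1, b)$ on antidiagonal $i-1$ when $a \geq 2$, and separately track the cell $(1, i)$ (the unique antidiagonal-$i$ cell with $a = 1$, present iff $\lambda_i \geq 1$, i.e.\ iff $\lambda$ has at least $i$ rows). Because $\lambda$ is a partition, if $(a,b) \in \lambda$ with $a \geq 2$ then $(a-1,b) \in \lambda$; this gives an injection from $\{c \in \pi^{-1}(i) : a(c) \geq 2\}$ into $\pi^{-1}(i-1)$. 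Hence $|\pi^{-1}(i)| \leq |\pi^{-1}(i-1)| + 1$, and more importantly, I would show the sequence cannot strictly decrease and then strictly increase again: once $|\pi^{-1}(i)| \leq |\pi^{-1}(i-1)|$, the cell $(1,i)$ is absent (otherwise the injection above plus this extra cell would force $|\pi^{-1}(i)| = |\pi^{-1}(i-1)| + 1$, contradiction), so $\lambda$ has fewer than $i$ rows; but then $\lambda$ has fewer than $i+1$ rows as well, the cell $(1,i+1)$ is absent, every antidiagonal-$(i+1)$ cell $(a,b)$ has $a \geq 2$ so maps via $c \mapsto (a-1,b)$ into $\pi^{-1}(i)$, giving $|\pi^{-1}(i+1)| \leq |\pi^{-1}(i)|$. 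Iterating, the sequence is nonincreasing from index $i-1$ onward. This proves unimodality. For the strict increase $|\pi^{-1}(i)| < |\pi^{-1}(i+1)|$ when $i < m$: if $|\pi^{-1}(i+1)| \leq |\pi^{-1}(i)|$ then by the argument just given the sequence is nonincreasing from $i$ on, so $m \leq i$, contradicting $i < m$. Finally, the strict inequality $|\pi^{-1}(1)| = 1 < m$ when $m \geq 2$ follows as the $i = 1$ instance of the same statement.

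The main obstacle is not any single estimate but getting the bookkeeping of the two ``modes'' of antidiagonal cells exactly right: the generic cells $(a,b)$ with $a \geq 2$, which always inject downward one antidiagonal, versus the single boundary cell $(1,i)$ on each antidiagonal, whose presence or absence is controlled by the number of parts of $\lambda$ and is exactly what lets the sequence increase by one at a time while going up, and forces it to be nonincreasing forever once it ever fails to increase. Care is also needed at the very top ($i = m$ and just beyond) to confirm the attained maximum is unique in the sense stated, i.e.\ $|\pi^{-1}(m)| = m$; this I would extract from the equality case of the one-step bound $|\pi^{-1}(i)| \leq |\pi^{-1}(i-1)| + 1$, which holds with equality exactly while $(1,i) \in \lambda$ and the downward injection is a bijection — precisely the ``staircase'' regime in which $|\pi^{-1}(i)| = i$.
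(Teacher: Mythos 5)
Your setup is sound: the formula $|\pi^{-1}(i)| = \#\{b : 1 \leq b \leq i,\ \lambda_b \geq i+1-b\}$ is correct, $|\pi^{-1}(1)| = 1$, and the map $(a,b) \mapsto (a-1,b)$ does inject $\{c \in \pi^{-1}(i) : a(c) \geq 2\}$ into $\pi^{-1}(i-1)$. The gap is in the pivotal step: you assert that if $(1,i) \in \lambda$, then the injection together with this extra cell ``forces'' $|\pi^{-1}(i)| = |\pi^{-1}(i-1)| + 1$. That would require the injection to be \emph{surjective} onto $\pi^{-1}(i-1)$, which is not guaranteed: a cell $(a', b') \in \pi^{-1}(i-1)$ is missed exactly when $(a'+1, b') \notin \lambda$, i.e.\ when $(a',b')$ is the last cell of its row. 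Concretely, take $\lambda = (2,1,1)$ and $i = 3$. The fiber sizes are $1, 2, 1, 0, \ldots$, so $|\pi^{-1}(3)| = 1 \leq 2 = |\pi^{-1}(2)|$; yet $\lambda_3 = 1$ means $(1,3) \in \lambda$, contradicting your deduction that $(1,i)$ is absent. (Here $\pi^{-1}(3) = \{(1,3)\}$ has no cell with $a \geq 2$, so the injection is the empty map and far from onto $\pi^{-1}(2) = \{(2,1),(1,2)\}$.) Everything downstream of that claim — ``so $\lambda$ has fewer than $i$ rows'' and the induction — therefore breaks.

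The correct ingredient is different. Writing $d_b \coloneqq \lambda_b + b - 1$, one has $|\pi^{-1}(i)| = \#\{b \in [1,i] : d_b \geq i\}$, and the partition condition is equivalent to $d_{b+1} - d_b \leq 1$ for all $b$. Let $m$ be the largest $k$ with the staircase $(k, k-1, \ldots, 1) \subseteq \lambda$ (the paper's one-line hint). Then $|\pi^{-1}(i)| = i$ for $i \leq m$, which gives the strict increase and $|\pi^{-1}(m)| = m$. For $i \geq m$,
\[
  |\pi^{-1}(i)| - |\pi^{-1}(i+1)| \;=\; \#\{b \in [1,i] : d_b = i\} \;-\; [\![\, \lambda \text{ has} \geq i+1 \text{ rows}\,]\!],
\]
and when $\lambda$ has at least $i+1$ rows one must exhibit some $b \leq i$ with $d_b = i$. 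This follows because $i + 1 > m$ forces some $b_0 \leq i+1$ with $d_{b_0} \leq i$, while $d_{i+1} \geq i+1$; taking the largest such $b_0$ and using $d_{b_0+1} - d_{b_0} \leq 1$ pins down $d_{b_0} = i$ with $b_0 \leq i$. In short, your bookkeeping of the gain from the boundary cell $(1,i+1)$ is fine, but the compensating loss comes from the ``$d$ increases by at most one'' step-structure (equivalently, from outer corners of $\lambda$ on antidiagonal $i$), not from the downward injection being a bijection.
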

  \begin{proof}
    This follows quickly by considering the largest staircase shape
    contained in $\lambda$. Indeed, $m$ is the number of rows or columns in such a staircase.
  \qed\end{proof}

\begin{example}
  \label{ex:hook_diagonals}
  If $\lambda \vdash n$ is a hook, the sequence of fiber sizes in
  Proposition \ref{prop:diagonal_unimodality} is
    \[ 1 < 2 \geq 2 \geq 2 \cdots \geq 2 \geq 1 \geq \cdots \geq 1 \geq 0 \geq \cdots \]
  where there are $N(\lambda)$ two's and $n -  N(\lambda)$ non-zero entries. In particular, 
  $N(\lambda) + 1 \leq n - N(\lambda)$, i.e.~$2N(\lambda)+1 \leq n$.
\end{example}

\begin{proposition}
  \label{prop:h_op}
  Let $\lambda \vdash n$ for $n \geq 1$. Set 
  \begin{equation}
    \label{eq:N}
    N \coloneqq
        \begin{cases}
          N(\lambda) & \text{if }2N(\lambda)+1 \leq n \\
          \left\lfloor\frac{n-1}{2}\right\rfloor & \text{if }2N(\lambda)+1 > n.
        \end{cases}
  \end{equation}
  Then
  \begin{equation}
    \label{eq:diag_hook}
    \lambda \lesssim^{\diag} (n-N, 1^N).
  \end{equation}
  In particular, if $2N(\lambda) + 1 \leq n$, then the hook $(n - N(\lambda), 1^{N(\lambda)})$
  is maximal for the diagonal preorder on partitions of size $n$ with diagonal excess
  $N(\lambda)$.
\end{proposition}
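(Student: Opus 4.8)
The plan is to reduce the statement to a uniform upper bound on $f_\mu(i) \coloneqq \#\{c \in \mu : h_c^{\op} \geq i\}$ valid for \emph{every} $\mu \vdash n$, and then prove that bound by a short case analysis on the shape of $\mu$. Set $M \coloneqq n - N$. A short check against \eqref{eq:N} shows that in both cases $2M \geq n+1$ and $M \geq \max_{c \in \lambda} h_c^{\op}$ (for the second case using $2N(\lambda)+1>n \Rightarrow N(\lambda)\ge\lfloor(n+1)/2\rfloor$). The hook $(n-N,1^N)=(M,1^{n-M})$ has opposite hook lengths forming the multiset $\{1,2,\ldots,M\}\uplus\{2,3,\ldots,n-M+1\}$ (the first block from its row, the second from its column), and since $2M\ge n+1$ forces $M\ge n-M+1$ this yields
\[
  \#\{d \in (n-N,1^N) : h_d^{\op}\ge i\}=
  \begin{cases} n & i=1\\ \max(0,\ n-2i+3,\ M-i+1) & i\ge 2.\end{cases}
\]
As $f_\lambda(1)=n$ and $M\ge \max_{c\in\lambda}h_c^{\op}$, the inequality $\lambda\lesssim^{\diag}(n-N,1^N)$ follows once we prove
\begin{equation*}
  f_\mu(i)\ \le\ \max\bigl(0,\ n-2i+3,\ M(\mu)-i+1\bigr),\qquad \mu\vdash n,\ i\ge 2,\tag{$\ast$}
\end{equation*}
where $M(\mu)\coloneqq\max_{c\in\mu}h_c^{\op}$. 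The ``in particular'' clause is then immediate: when $2N(\lambda)+1\le n$ we have $M=n-N(\lambda)$, so the hook $(n-N(\lambda),1^{N(\lambda)})$ has largest opposite hook length $M$ and thus diagonal excess $N(\lambda)$, and applying $(\ast)$ to any $\mu\vdash n$ with $N(\mu)=N(\lambda)$ (hence $M(\mu)=M$) gives $\mu\lesssim^{\diag}(n-N(\lambda),1^{N(\lambda)})$.

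To prove $(\ast)$, write $f_\mu(i)=n-|\mu\cap\delta_{i-1}|$, where $\delta_{i-1}=(i-1,i-2,\ldots,1)$ is the staircase of all cells with $h^{\op}\le i-1$. If $i>M(\mu)$ then $f_\mu(i)=0$ and we are done; otherwise $2\le i\le M(\mu)$, and since then $n-M(\mu)+i-1\le n$ it suffices to show $|\mu\cap\delta_{i-1}|\ge\min(2i-3,\ n-M(\mu)+i-1)$. As $h^{\op}$, $M(\mu)$, and $n$ are transpose-invariant, we may replace $\mu$ by $\mu'$ freely. \textbf{Case 1:} $\mu$ has at least $i-1$ rows and at least $i-1$ columns. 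Then the first $i-1$ cells of row $1$ and of column $1$ all lie in $\mu\cap\delta_{i-1}$ and overlap only in $(1,1)$, so $|\mu\cap\delta_{i-1}|\ge 2(i-1)-1=2i-3$. \textbf{Case 2:} $\mu$ has fewer than $i-1$ rows or fewer than $i-1$ columns; transposing if necessary, assume $\mu$ has $r\le i-2$ rows, so $|\mu\cap\delta_{i-1}|=\sum_{b=1}^{r}\min(\mu_b,i-b)$.

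In Case 2 put $B\coloneqq\{b\in[1,r]:\mu_b+b-1\ge i\}$. If $B=\varnothing$ then $f_\mu(i)=0$. If $B=\{b^*\}$ then $f_\mu(i)=\mu_{b^*}+b^*-i\le M(\mu)-i+1$ since $\mu_{b^*}+b^*-1\le M(\mu)$. If $|B|\ge 2$, let $b_1<b_2$ be its two smallest elements; I claim $|\mu\cap\delta_{i-1}|\ge 2i-3$. Monotonicity of $\mu$ gives $\mu_b\ge\mu_{b_2}\ge i-b_2+1$ for all $b\le b_2$. Since $b_1,b_2\in B$ contribute $i-b_1$ and $i-b_2$ to $\sum_{b=1}^{b_2}\min(\mu_b,i-b)$, while each of the $b_2-2$ remaining indices in $\{1,\ldots,b_2\}$ lies outside $B$ and so contributes $\mu_b\ge i-b_2+1$, we obtain
\[
  |\mu\cap\delta_{i-1}|\ \ge\ (i-b_1)+(i-b_2)+(b_2-2)(i-b_2+1).
\]
Using $b_1\le b_2-1$, the right-hand side minus $2i-3$ is at least $(b_2-2)(i-b_2-1)\ge 0$, the last inequality because $b_2\ge 2$ and $b_2\le r\le i-2$. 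This establishes $(\ast)$.

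I expect the subcase $|B|\ge 2$ of Case 2 to be the only real obstacle: it is precisely where the ``fat'' bound $n-2i+3$ must be used rather than the ``thin'' bound $M(\mu)-i+1$, and it relies on the observation that forcing two rows past the anti-diagonal at level $i$ compels all rows above them to be long. Everything else is bookkeeping, the one place needing care being the opening reduction, where one must verify $2M\ge n+1$ and $M\ge M(\lambda)$ in both branches of \eqref{eq:N} so that the hook's row block dominates its column block and the opposite-hook multiset computation is valid as stated.
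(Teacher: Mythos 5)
Your proof is correct, and it takes a genuinely different route from the paper's. The paper proves \eqref{eq:diag_hook} by an iterative argument on the sequence $D(\lambda) = (|\pi^{-1}(i)|)_i$ of fiber sizes: it defines two local moves, (i) and (ii), that strictly increase $D$ in the diagonal order on sequences while driving $N(D)$ down to $N$, and shows the terminal sequence is $D((n-N,1^N))$. That argument leans on Proposition~\ref{prop:diagonal_unimodality} (unimodality of $D(\lambda)$) to control what the intermediate sequences look like, and the intermediate $D$'s need not come from actual partitions. Your argument instead rewrites $f_\mu(i) = n - |\mu \cap \delta_{i-1}|$ (intersection with a staircase), computes $f_{(n-N,1^N)}(i) = \max(0, n-2i+3, M-i+1)$ in closed form from $2M \geq n+1$, and proves the pointwise inequality $(\ast)$ directly by a two-case analysis on whether $\mu$ fits inside an $(i-2)$-strip; the single nontrivial subcase ($|B|\ge 2$) uses only the monotonicity $\mu_1 \geq \mu_2 \geq \cdots$. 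This avoids the unimodality lemma entirely, is non-iterative, yields the strictly stronger uniform bound $(\ast)$ parameterized by $M(\mu)$, and makes explicit the dichotomy between the ``fat'' regime (where $n-2i+3$ is the binding constraint) and the ``tail'' regime (where $M(\mu)-i+1$ is). The paper's move-based proof, on the other hand, is shorter to state once the unimodality lemma is in hand and reads more like a direct calculation on the diagram. One small presentational point: your opening reduction verifies $2M \geq n+1$ and $M \geq M(\lambda)$ somewhat tersely; spelling out that in the first branch of \eqref{eq:N} one has $M = M(\lambda)$ exactly (since $N(\lambda) = n - M(\lambda)$) and in the second branch $M(\lambda) \leq \lfloor n/2 \rfloor < \lceil (n+1)/2 \rceil = M$ would make that step airtight.
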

  \begin{proof}
    Using Proposition \ref{prop:diagonal_unimodality}, the sequence
      \[ D(\lambda) \coloneqq \left(|\pi^{-1}(i)|\right)_{i \in \bP}. \]
    is of the form
      \[ D(\lambda) = (1, 2, \ldots, m, \ldots, 0, \ldots) \]
    where the terms weakly decrease starting at $m$. Given a sequence
    $D = (D_1, D_2, \ldots) \in \bN^{\bP}$, define $N(D) \coloneqq
    \sum_{i : D_i \neq 0} (D_i - 1)$. We have $N(D(\lambda)) = N(\lambda)$.
    Iteratively perform the following procedure starting with $D \coloneqq D(\lambda)$ as many
    times as possible; see Example \ref{ex:hook_reduction}.
    \begin{enumerate}[(i)]
      \item If $2N(D)+1 > n$ and some $D_i > 2$, choose $i$ maximal with this property.
        Decrease the $i$th entry of $D$ by $1$ and replace the first $0$ term in $D$ with $1$.
      \item If $2N(D)+1 \leq n$ and some $D_i > 2$, choose $i$ maximal with this property.
        We will shortly show that there is some $j>i$ for which $D_j = 1$. Choose $j$
        minimal with this property, decrease the $i$th term in $D$ by $1$, and increment the
        $j$th term by $1$.
    \end{enumerate}
    
    \begin{example}
      \label{ex:hook_reduction}
      Suppose $\lambda = (4, 4, 4, 4)$, so $n = 16$ and
        \[ D(\lambda) = (1, 2, 3, 4, 3, 2, 1, 0, \ldots), \]
      which we abbreviate as $D(\lambda) = 1234321$. Applying the procedure gives the
      following sequences, where modified entries are underlined:

      \begin{center}
        \begin{tabular}{l|c|c}
          $D$ & $N(D)$ & $2N(D) + 1$ \\
          \hline
          1234321 & 9 & 19 \\
          1234\underline{2}21\underline{1} & 8 & 17 \\
          123\underline{3}2211\underline{1} & 7 & 15 \\
          123\underline{2}22\underline{2}11 & 7 & 15 \\
          12\underline{2}2222\underline{2}1 & 7 & 15
      \end{tabular}
      \end{center}
    \end{example}
    
    Returning to the proof, for the claim in (ii), first note that both procedures preserve
    unimodality and the initial $1$ in $D(\lambda)$. Hence at any intermediate step, $D$ is of
    the form
      \[ (1, D_2, D_3, \ldots, D_k, 1, \ldots, 1, 0, \ldots) \]
    where $D_2, \ldots, D_k \geq 2$ and there are $\ell \geq 0$ terminal $1$'s. Since
    $2N(D) + 1 \leq n$, we have
    \begin{align*}
      2N(D) + 1
        &= 2(D_2 - 1 + \cdots + D_k - 1) + 1
        \leq n = 1 + D_2 + \cdots + D_k + \ell \\
        &\Leftrightarrow (D_2 - 2) + \cdots + (D_k - 2) \leq \ell,
    \end{align*}
    forcing $\ell > 0$ since by assumption some $D_i > 2$, giving the claim. The procedure
    evidently terminates.
    
    In applying (i), $N(D)$ decreases by $1$, whereas $N(D)$ is constant in applying (ii).
    For the final sequence $D_{\fin}$, it follows that $N(D_{\fin}) = N$ from \eqref{eq:N}.
    Both (i) and (ii) strictly increase in the natural diagonal partial order on sequences. The final 
    sequence will be
      \[ D_{\fin} = (1, 2, 2, \ldots, 2, 1, 1, \ldots, 1, 0, \ldots) \]
    where there are $N$ two's and $n-N$ non-zero entries. This is precisely $D((n-N, 1^N))$
    by Example \ref{ex:hook_diagonals}, and the result follows.
  \qed\end{proof}

We may now give a polynomial lower bound on $f^\lambda$.

\begin{corollary}
  \label{cor:h_op}
  Let $\lambda \vdash n$ for $n \geq 1$ and take $N$ as in \eqref{eq:N}. For any
  $0 \leq M \leq N$, we have
  \begin{equation}
    \label{eq:h_op_bound}
    \prod_{c \in \lambda} h_c^{\op} \leq (n-M)! (M+1)!.
  \end{equation}
  Moreover,
  \begin{equation}
    \label{eq:f_la_binom}
    f^\lambda \geq \frac{1}{M+1} \binom{n}{M}.
  \end{equation}
\end{corollary}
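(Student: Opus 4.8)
The plan is to deduce \eqref{eq:h_op_bound} from the diagonal-preorder bound already established in Proposition \ref{prop:h_op}, and then to read off \eqref{eq:f_la_binom} by combining the ordinary hook length formula with the inequality of Proposition \ref{prop:h_op1}.

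First I would evaluate $\prod_{c} h_c^{\op}$ on the maximal hook $\mu \coloneqq (n-N, 1^N)$. In French notation the bottom row of $\mu$ consists of cells with opposite hook lengths $1, 2, \ldots, n-N$, while the $N$ cells of the leg have opposite hook lengths $2, 3, \ldots, N+1$ (this is also consistent with the fiber-size description in Example \ref{ex:hook_diagonals}), so $\prod_{c \in \mu} h_c^{\op} = (n-N)!\,(N+1)!$. Since Proposition \ref{prop:h_op} gives $\lambda \lesssim^{\diag} \mu$, the implication \eqref{eq:diag_preorder} yields
\[
  \prod_{c \in \lambda} h_c^{\op} \;\le\; (n-N)!\,(N+1)!.
\]
To pass from $N$ to an arbitrary $M$ with $0 \le M \le N$, I would set $g(M) \coloneqq (n-M)!\,(M+1)!$ and observe that $g(M+1)/g(M) = (M+2)/(n-M) \le 1$ precisely when $2M+2 \le n$. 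The definition \eqref{eq:N} of $N$ was arranged so that $2N+1 \le n$ in both of its cases; hence every $M$ with $M < N$ satisfies $2(M+1) \le 2N \le n-1$, so $g$ is weakly decreasing on $\{0, 1, \ldots, N\}$ and $g(N) \le g(M)$, which is \eqref{eq:h_op_bound}.

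Finally, for \eqref{eq:f_la_binom} I would invoke the hook length formula $f^\lambda = n!/\prod_{c} h_c$ together with Proposition \ref{prop:h_op1}, which gives $\prod_c h_c \le \prod_c h_c^{\op}$, and then \eqref{eq:h_op_bound}:
\[
  f^\lambda \;=\; \frac{n!}{\prod_{c} h_c} \;\ge\; \frac{n!}{\prod_{c} h_c^{\op}} \;\ge\; \frac{n!}{(n-M)!\,(M+1)!} \;=\; \frac{1}{M+1}\binom{n}{M}.
\]

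I do not anticipate a genuine obstacle: the two substantive inputs (Propositions \ref{prop:h_op1} and \ref{prop:h_op}) are already in hand, and what remains is a direct product evaluation on a hook, a one-line monotonicity check that leans on the inequality $2N+1 \le n$ built into \eqref{eq:N}, and an application of the hook length formula. The only points deserving a second look are confirming that $g$ is monotone across the entire range $0 \le M \le N$ rather than merely near $M = N$, and checking the degenerate boundary case $N = 0$, where $\mu = (n)$, the leg is empty, and all of the displayed inequalities reduce to equalities.
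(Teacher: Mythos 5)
Your proof is correct and follows essentially the same route as the paper: both derive \eqref{eq:h_op_bound} at $M=N$ from Proposition \ref{prop:h_op} together with \eqref{eq:diag_preorder}, and then obtain \eqref{eq:f_la_binom} via the hook length formula and Proposition \ref{prop:h_op1}. The one small variation is how you extend from $M=N$ to general $M$: the paper notes the further diagonal-preorder comparison $(n-N,1^N) \lesssim^{\diag} (n-M,1^M)$, whereas you evaluate $\prod_c h_c^{\op}$ on hooks explicitly and check that $g(M) = (n-M)!\,(M+1)!$ is weakly decreasing on $\{0,\dots,N\}$ via the ratio $g(M+1)/g(M) = (M+2)/(n-M)$ and the bound $2N \le n-1$ built into \eqref{eq:N}; the two are equivalent, and your version is a touch more explicit but computationally the same.
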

  \begin{proof}
    Equation \eqref{eq:h_op_bound} in the case $M=N$ follows by combining
    \eqref{eq:diag_preorder}
    and \eqref{eq:diag_hook}. The general case follows similarly upon noting $(n-N, 1^N)
    \lesssim^{\diag} (n-M, 1^M)$ since $N \leq \left\lfloor\frac{n-1}{2}\right\rfloor$.
    
    For \eqref{eq:f_la_binom}, use Proposition \ref{prop:h_op1} and
    \eqref{eq:h_op_bound} to compute
      \[ f^\lambda = \frac{n!}{\prod_{c \in \lambda} h_c}
          \geq \frac{n!}{\prod_{c \in \lambda} h_c^{\op}}
          \geq \frac{n!}{(n-M)!(M+1)!} = \frac{1}{M+1} \binom{n}{M}. \]
  \qed\end{proof}

We now prove Theorem \ref{thm:main} and Theorem \ref{thm:dist}.

\begin{proof}[of Theorem \ref{thm:main}]
  We begin by summarizing the verification of Theorem \ref{thm:main} for $n \leq 33$. For
  $1 \leq n \leq 33$, a computer check shows that one may use Corollary \ref{cor:n_cubed}
  for all but $688$ particular $\lambda$. However, the number of standard tableaux for these
  exceptional $\lambda$ is small enough that the conclusion of the theorem may be quickly 
  verified by computer. We now take $n \geq 34$.
  
  Let $N$ be as in \eqref{eq:N}. If $N \geq 5$, by Corollary \ref{cor:h_op},
    \[ f^\lambda \geq \frac{1}{6} \binom{n}{5} \geq n^3 \]
  for $n \geq 32$, so we may take $N \leq 4$. Since $\left\lfloor\frac{n-1}{2}\right\rfloor
  \geq 16 > 4 \geq N$, we must have $N = N(\lambda)$.
  
  Write $\nu \oplus \mu$ to denote the
  concatenation of partitions $\nu$ and $\mu$, where we assume the largest part of $\mu$
  is no larger than the smallest part of $\nu$. Using Proposition
  \ref{prop:diagonal_unimodality}, since $n \geq 32$ and $N=N(\lambda) \leq 4$, we find that
  either $\lambda = (n-N) \oplus \mu$ or $\lambda' = (n-N) \oplus \mu$ for $|\mu| = N$.
  
  To cut down on duplicate work, note that transposing $T \in \SYT(\lambda)$ complements
  the descent set of $T$. It follows that $b_{\lambda, i} =
  b_{\lambda', \binom{n}{2} - i}$, so that $a_{\lambda, r} = a_{\lambda',
  \binom{n}{2} - r}$. Since the statement of Theorem \ref{thm:main} also exhibits this
  symmetry, we may thus consider only the case when $\lambda = (n-N) \oplus \mu$.

  There are twelve $\mu$ with $|\mu| \leq 4$. One may check that the five possible $\mu$ for
  $N = 4$ all result in $f^\lambda \geq n^3$ for $n \geq 34$, leaving seven remaining $\mu$,
  namely $\mu = \varnothing, (1), (2), (1, 1), (3), (2, 1), (1, 1, 1)$. It is straightforward
  though tedious to verify the conclusion of Theorem \ref{thm:main} in each of these cases. For
  instance, for $\mu = (1)$ and $\lambda = (n-1, 1)$, there are $n-1$ standard tableaux with
  major indexes $1, \ldots, n-1$ (alternatively, \eqref{eq:hook_q} results in
  $q[n-1]_q$). The remaining cases are omitted.
\qed\end{proof}

\begin{proof}[of Theorem \ref{thm:dist}]
  If $f^\lambda \geq n^5$, then \eqref{eq:fl_bound} gives
  \begin{equation}
    \label{eq:char_ratio3}
    \ln \frac{|\chi^\lambda(\ell^s)|}{f^\lambda}
      \leq \left(1 - \frac{1}{\ell}\right) \left[-\frac{9}{2}\ln n + \ln \sqrt{2\pi}\right]
             + \frac{\ell}{12n} - \frac{1}{2} \ln \ell
  \end{equation}
  As before one can check that the right-hand side of \eqref{eq:char_ratio3} is less than
  $\ln \frac{1}{\phi(\ell)n^2}$ for $\ell=2, 3$ and $n \geq 3$. When $\ell \geq 4$,
  term-by-term estimates give
  \begin{equation*}
      \ln \frac{|\chi^\lambda(\ell^s)|}{f^\lambda} \leq
         -\frac{9}{2} \left(1 - \frac{1}{4}\right) \ln n
         + \ln \sqrt{2\pi} + \frac{1}{12} - \frac{1}{2} \ln 4
  \end{equation*}
  which is less than $\ln \frac{1}{n^3}$ for $n \geq 3$. The first part of
  Theorem \ref{thm:dist} now follows from Lemma \ref{lem:phi_d} with $d=2$ for $n \geq 3$.
  It remains true for $n=1, 2$.
  
  For the second part, suppose $n \geq 81$, $\lambda_1 < n-7$, and $\lambda_1' < n-7$.
  It follows from Proposition \ref{prop:h_op} that $N$ from \eqref{eq:N} satisfies
  $N \geq 8$. Hence by Corollary \ref{cor:h_op} we have
    \[ f^\lambda \geq \frac{1}{9} \binom{n}{8} \geq n^5. \]
\qed\end{proof}

\section{Alternative Proof of the Hook Formula}
\label{sec:hook}

The proof of Theorem \ref{thm:hook} in \cite{fl95} and \cite{jk81} uses a certain
decomposition of the $r$-rim hook partition lattice and the original hook length formula.
We present an alternative proof following a different tradition,
instead generalizing the approach to the original hook length formula in
\cite[Corollary 7.21.6]{ec2}. A by-product of our proof is a particularly explicit description of
the movement of hook lengths mod $\ell$ as length $\ell$ ribbons are added to a partition
shape.

We are not at present aware of any other proofs or direct uses of Theorem \ref{thm:hook}, and
it seems to have been neglected by the literature. Indeed, the author empirically rediscovered it
and found the following proof before unearthing \cite{fl95}.

\begin{proof}[of Theorem \ref{thm:hook}]

Let $\lambda \vdash n$, $n = \ell s$. If $\lambda$ cannot be written as $s$ successive
ribbons of length $\ell$, then by the classical Murnaghan-Nakayama rule
\cite[Eq.~(7.75)]{ec2} we have $\chi^\lambda(\ell^s) = 0$, so assume $\lambda$ can be so
written.

Combining \eqref{eq:cyc_exps}, \eqref{eq:hook_q}, and \eqref{eq:lin_comb1}
shows that we may compute $\chi^\lambda(\ell^s)$ by letting $q \to \omega_n^s$ in
the right-hand side of \eqref{eq:hook_q}. We may replace each $q$-number
$[a]_q$ with $q^a - 1$ by canceling the $q - 1$'s, since $\lambda \vdash n$. Since
$\omega_n^s$ has order $\ell$, the values of $q^a - 1$ at $\omega_n^s$ depend only
on $a$ mod $\ell$. Moreover, $q^a - 1$ has only simple roots, and it has a root at
$\omega_n^s$ if and only if $\ell \mid a$. The order of vanishing of the numerator at
$q = \omega_n^s$ is then $\#\{i \in [n] : i \equiv_\ell 0\} = s$, and the order of
vanishing of the denominator is $\#\{c \in \lambda : h_c \equiv_\ell 0\}$. The
following lemma ensures these counts agree. We postpone the proof to the end of
this section.

\begin{lemma}
  \label{lem:fiberpairs}
  Let $\lambda \vdash n$, $n = \ell s$, and suppose $\lambda$
  can be written as a sequence of $s$ successive ribbons of length $\ell$.
  Then for any $a \in \bZ$,
    \[ \#\{c \in \lambda : h_c \equiv_{\ell} \pm a\}
        = s \cdot \#\{a, -a \text{ (mod $\ell$)}\}. \]
\end{lemma}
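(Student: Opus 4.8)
The plan is to induct on $s$, building $\lambda$ from the empty partition by adding one length-$\ell$ ribbon at a time; the base case $s = 0$ is immediate since $\varnothing$ has no cells. A partition has empty $\ell$-core precisely when it is obtained from $\varnothing$ by a chain $\varnothing = \mu^{(0)} \subset \mu^{(1)} \subset \cdots \subset \mu^{(s)} = \lambda$ in which each $\mu^{(i)}/\mu^{(i-1)}$ is a ribbon of length $\ell$, and then each $\mu^{(i)}$ again has empty $\ell$-core. So it suffices to show that whenever $\lambda \supseteq \mu$ with $\lambda/\mu$ a ribbon of length $\ell$, one has, for every $a \in \bZ$,
\[ \#\{c \in \lambda : h_c \equiv_{\ell} \pm a\} = \#\{c \in \mu : h_c \equiv_{\ell} \pm a\} + \#\{a, -a \bmod \ell\}. \]

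To track the ribbon addition I would pass to the abacus (Maya diagram) encoding: to $\mu = (\mu_1, \mu_2, \ldots)$ associate $A(\mu) \coloneqq \{\mu_i - i : i \geq 1\} \subseteq \bZ$, which contains all sufficiently negative integers and excludes all sufficiently positive ones. Two classical facts (see \cite{jk81}) drive the argument: (i) the multiset of hook lengths of $\mu$ is exactly $\{x - y : x \in A(\mu),\ y \notin A(\mu),\ y < x\}$, one cell per such pair; and (ii) $\lambda/\mu$ being a ribbon of length $\ell$ is equivalent to $A(\lambda) = (A(\mu) \setminus \{p\}) \cup \{p + \ell\}$ for the unique such $p \in A(\mu)$ with $p + \ell \notin A(\mu)$ --- a bead being moved up by $\ell$.

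Write $A \coloneqq A(\mu)$. Using (i) and (ii) I would compute the difference between the hook-length multisets of $\lambda$ and of $\mu$: every pair $(x,y)$ avoiding $\{p, p+\ell\}$ is common to both, so the discrepancy is carried by the finitely many pairs with $x$ or $y$ in $\{p, p + \ell\}$. Enumerating these and reducing mod $\ell$, the pairs $(p, y)$ with $y < p$ (lost from $\mu$) cancel the pairs $(p + \ell, y)$ with $y < p$ (new in $\lambda$), and the pairs $(x, p + \ell)$ with $x > p + \ell$ (lost from $\mu$) cancel the pairs $(x, p)$ with $x > p$ (new in $\lambda$). Hence no residue is lost, and the residues gained are: a single $0$ (from the pair $(p + \ell, p)$), together with, for each integer $z$ with $p < z < p + \ell$, the residue $z - p \bmod \ell$ if $z \in A$ and the residue $p - z \bmod \ell$ if $z \notin A$. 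As $z$ runs over $p + 1, \ldots, p + \ell - 1$ the value $k \coloneqq z - p$ runs over $1, \ldots, \ell - 1$, so what is gained is $\{0\}$ together with, for each $k \in \{1, \ldots, \ell - 1\}$, exactly one of $k$ or $-k$ modulo $\ell$. Grouping residues into classes $\{r, -r\} \pmod \ell$: the class $\{0\}$ gains one cell; if $\ell$ is even the class $\{\ell/2\}$ gains one cell (only $k = \ell/2$ maps into it); and every other class $\{r, \ell - r\}$ gains two cells (the terms for $k = r$ and for $k = \ell - r$ both land in it, and these values of $k$ are distinct). In every case the class $\{a, -a\}$ gains $\#\{a, -a \bmod \ell\}$ cells, which is precisely the claimed increment, completing the induction.

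The main obstacle is the bookkeeping in the multiset-difference computation of the last paragraph: carefully pairing up the ``unchanged'' (bead, non-bead) pairs, handling the endpoints $z = p$ and $z = p + \ell$, and checking that after reduction mod $\ell$ the cancellations are exact and $\mu$ loses nothing --- the reduction must be performed before matching terms, as it is what forces the cancellation. (For the case $a = 0$, which is all Theorem \ref{thm:hook} actually uses, there is a shortcut: the cells of $\lambda$ with $\ell \mid h_c$ are in bijection with the cells of the $\ell$-quotient of $\lambda$, whose total size is $|\lambda|/\ell = s$ when the $\ell$-core is empty. The ribbon computation above has the advantage of yielding the full $\pm a$ statement, along with the promised explicit description of how hook lengths move mod $\ell$ under ribbon addition.)
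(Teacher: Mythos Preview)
Your argument is correct. Both you and the paper reduce Lemma~\ref{lem:fiberpairs} to the same increment statement (the paper isolates it as Lemma~\ref{lem:fiberdiffs}): adding a single length-$\ell$ ribbon increases $\#\{c : h_c \equiv_\ell \pm a\}$ by exactly $\#\{a,-a \bmod \ell\}$. The difference is entirely in how that increment is proved. The paper works directly on the Young diagram, partitioning $\lambda$ into four regions relative to the ribbon, showing that hook lengths in regions (I)--(III) are merely permuted mod $\ell$, and then reducing region (IV) to a hook-plus-box picture handled by a secondary induction. You instead pass to the abacus, where hook lengths are the differences $x-y$ over bead/non-bead pairs and ribbon addition is a single bead slide $p \mapsto p+\ell$; the increment then drops out of a short enumeration of which pairs are created or destroyed. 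Your route is shorter and more algebraic, leaning on the standard abacus dictionary from \cite{jk81}; the paper's route is more self-contained and pictorial, and yields as a byproduct the explicit cyclic-shift description of hook lengths within each column of region~(II). One small wording issue: when you say the lost pairs $(x,p+\ell)$ with $x>p+\ell$ ``cancel the pairs $(x,p)$ with $x>p$,'' the bijective cancellation is only against those gained pairs with $x>p+\ell$; the surplus gained pairs with $p<x\le p+\ell$ are exactly what you enumerate in the next sentence, so the argument is sound, but the phrasing could be tightened.
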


\noindent Here $\#\{a, -a \text{ (mod $\ell$)}\}$ is $1$ if $a \equiv_\ell -a$ and
$2$ otherwise.

We may now compute the desired $q \to \omega_n^s$ limit by repeated
applications of L'Hopital's rule. In particular, we find
\begin{equation}
  \label{eq:rect_char}
  |\chi^\lambda(\ell^s)|
      = \left|\lim_{q \to \omega_n^s} q^{d(\lambda)}
         \frac{\prod_{i \in [n]} [i]_q}{\prod_{c \in \lambda} [h_c]_q}\right|
      = \left|\lim_{q \to \omega_n^s}
         \frac{\prod\limits_{\substack{i \in [n] \\ i \not\equiv_\ell 0}} q^i - 1}
                 {\prod\limits_{\substack{c \in \lambda \\ h_c \not\equiv_\ell 0}} q^{h_c} - 1}
         \right|
         \left|
         \frac{\prod\limits_{\substack{i \in [n] \\ i \equiv_\ell 0}} i\omega_n^{s(i-1)}}
                 {\prod\limits_{\substack{c \in \lambda \\ h_c \equiv_\ell 0}}
                       h_c\omega_n^{s(h_c-1)}}
         \right|
\end{equation}

The second factor in the right-hand side of \eqref{eq:rect_char} equals the
right-hand side of \eqref{eq:hook_prod}, so we must show the first factor in the
right-hand side of \eqref{eq:rect_char} is $1$. For that, note that $q^a - 1$ at
$q = \omega_n^s$ for $a \not\equiv_\ell 0$ is non-zero and is conjugate to
$q^{-a} - 1$ at $q = \omega_n^s$. By Lemma \ref{lem:fiberpairs}, it follows 
that the contribution to the overall magnitude due to $\{c \in \lambda : h_c \equiv_{\ell} a
\text{ or } -a\}$ cancels with the contribution due to  $\{i \in [n] : i \equiv_{\ell} a \text{ or }
-a\}$ for each $a \not\equiv_\ell 0$. This completes the proof of the theorem.

\qed\end{proof}

As for Lemma \ref{lem:fiberpairs}, it is an immediate consequence of the following somewhat
more general result.

\begin{lemma}
  \label{lem:fiberdiffs}
  Suppose $\lambda/\mu$ is a ribbon of length $\ell$. For any $a \in \bZ$,
    \[ \#\{c \in \mu : h_c \equiv_\ell \pm a\} + \#\{a, -a \text{ (mod $\ell$)}\}
        = \#\{d \in \lambda : h_d \equiv_\ell \pm a\}. \]
\end{lemma}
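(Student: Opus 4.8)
The plan is to argue entirely in terms of first-column hook lengths (equivalently, the one-runner abacus / $\beta$-numbers), since trying to track individual cells of $\mu$ as the ribbon is attached quickly becomes unwieldy. Pad $\mu$ and $\lambda$ to a common number $K$ of rows and set $\beta(\mu) = \{\mu_i + K - i : 1 \le i \le K\}$ and $\beta(\lambda) = \{\lambda_i + K - i : 1 \le i \le K\}$; these are sets of $K$ distinct nonnegative integers, the \emph{beads}, and the nonnegative integers below $\max\beta$ that are not beads are the \emph{gaps}. Two classical facts (see \cite[2.7]{jk81}) drive everything. First, the multiset of \emph{all} hook lengths of a shape with bead set $B$ is $\{\, b - g : b \in B,\ g \notin B,\ 0 \le g < b \,\}$, since the row with first-column hook $b$ contributes precisely the values $b-g$ over gaps $g < b$. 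Second, $\lambda/\mu$ being a ribbon of length $\ell$ is equivalent to $\beta(\lambda) = (\beta(\mu)\setminus\{p\}) \cup \{p+\ell\}$ for some bead $p$ of $\mu$ with $p+\ell$ a gap of $\mu$; writing $L$ for the number of beads of $\mu$ strictly between $p$ and $p+\ell$ (the leg length), note that $G(\lambda) = (G(\mu)\setminus\{p+\ell\})\cup\{p\}$.

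Next I would set up an explicit matching between the (bead, gap) pairs of $\mu$ and those of $\lambda$ that preserves hook length mod $\ell$, and isolate the surplus. Pairs $(b,g)$ with $b \neq p$ and $g \notin \{p,p+\ell\}$ occur verbatim for both shapes and match to themselves. Each pair $(b,p+\ell)$ of $\mu$ with $b > p+\ell$ matches to $(b,p)$ of $\lambda$, and each pair $(p,g)$ of $\mu$ with $g < p$ matches to $(p+\ell,g)$ of $\lambda$; in both cases the two hook lengths differ by exactly $\ell$. An exhaustive (but routine) case split on where $b$ and $g$ lie relative to $p$ and $p+\ell$ shows this uses up every pair of $\mu$ and every pair of $\lambda$ except for exactly $\ell$ surplus pairs of $\lambda$: the pair $(p+\ell,\,p)$ with hook length $\ell \equiv_\ell 0$; the pairs $(p+\ell,\,p+j)$ for the $\ell-1-L$ values $j \in \{1,\dots,\ell-1\}$ with $p+j$ a gap of $\mu$, with hook length $\ell-j \equiv_\ell -j$; and the pairs $(p+j,\,p)$ for the $L$ values $j \in \{1,\dots,\ell-1\}$ with $p+j$ a bead of $\mu$, with hook length $j$. (The count $1 + (\ell-1-L) + L = \ell$ agrees with $|\lambda|-|\mu| = \ell$.)

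Finally I would read off residues. Exactly one surplus hook length is $\equiv_\ell 0$; and for each $j \in \{1,\dots,\ell-1\}$ there is exactly one surplus hook length coming from position $p+j$, which is $\equiv_\ell j$ if $p+j$ is a bead and $\equiv_\ell -j$ if $p+j$ is a gap. Fix $a$. If $a \equiv_\ell 0$, the surplus hook lengths $\equiv_\ell \pm a$ number $1 = \#\{a,-a \bmod \ell\}$. If $2a \equiv_\ell 0$ but $a \not\equiv_\ell 0$ (so $\ell$ is even and $a \equiv_\ell \ell/2$), the single relevant position is $p + \ell/2$, which is a bead or a gap, and in either case contributes exactly one surplus hook length $\equiv_\ell \ell/2$, giving $1 = \#\{a,-a\bmod\ell\}$. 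Otherwise let $j_0, j_1 \in \{1,\dots,\ell-1\}$ be the representatives of $a$ and $-a$ (distinct); then $p+j_0$ contributes one surplus hook length, $\equiv_\ell a$ if it is a bead and $\equiv_\ell -a$ if it is a gap, and likewise $p+j_1$ contributes one, $\equiv_\ell -a$ if a bead and $\equiv_\ell a$ if a gap, so the surplus count $\equiv_\ell \pm a$ is $2 = \#\{a,-a\bmod\ell\}$. Since matched pairs contribute equally mod $\ell$ to $\mu$ and $\lambda$, subtracting the two counts yields $\#\{d \in \lambda : h_d \equiv_\ell \pm a\} - \#\{c \in \mu : h_c \equiv_\ell \pm a\} = \#\{a,-a \bmod \ell\}$, as required.

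The only real obstacle is the bookkeeping: confirming that the matching in the second step is genuinely a bijection between the pair-sets minus the listed $\ell$ surplus pairs — this is an exhaustive case check on the positions of $b$ and $g$, which is mechanical but must be done carefully — and then the three-way $\pm a$ accounting in the last step, where the key simplification is that each position $p+j$ is a bead or a gap but never both, so the two residue classes $a$ and $-a$ together pick up exactly one contribution from each of the relevant positions.
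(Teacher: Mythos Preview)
Your proof is correct and takes a genuinely different route from the paper's. The paper argues geometrically: it partitions $\lambda$ into regions (I)--(IV) according to how cells sit relative to the ribbon, shows that in regions (I)--(III) the hook lengths mod $\ell$ are merely permuted in passing from $\mu$ to $\lambda$ (via an explicit cyclic shift within each column of region (II), and symmetrically for (III)), and then handles region (IV) by translating $\mu$ one step up and right to identify a subregion (B) with $\mu$ itself, reducing to a first-row-and-column computation proved by induction on the number of cells in (B). Your argument instead encodes everything through the $\beta$-set and the classical bead--gap description of hook lengths, so that adding the ribbon becomes the single move $p \mapsto p+\ell$ and the entire lemma falls out of an explicit matching of (bead, gap) pairs with exactly $\ell$ surplus pairs on the $\lambda$ side.

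What each approach buys: the paper's proof is self-contained and pictorial, needing no prior abacus machinery, but pays for this with a two-stage reduction plus an induction. Your abacus argument is shorter and induction-free once the two cited facts from \cite[2.7]{jk81} are in hand, and it makes the structure of the $\ell$ new residues completely transparent (one new residue is $0$, and for each $j\in\{1,\dots,\ell-1\}$ exactly one new residue lies in $\{j,-j\}$, depending only on whether $p+j$ is a bead or a gap). The paper's method has the mild advantage that its intermediate claims (e.g.\ the cyclic shift of hook lengths in a column of region (II)) are themselves of some independent interest; yours has the advantage of fitting naturally into the standard $\ell$-core/$\ell$-quotient framework, which is the native language for the surrounding results on $\chi^\lambda(\ell^s)$.
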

  \begin{proof}
	We determine how the counts $\#\{c \in \mu : h_c \equiv_\ell \pm a\}$ change when
	adding a ribbon of length $\ell$; see Figure \ref{fig:ribbon_regions}. We define the following
	regions in $\lambda$, relying on French notation to determine the meaning of ``leftmost,''
	etc.
    \begin{enumerate}[(I)]
      \item Cells $c \in \mu$ where $c$ is not in the same row or column as
        any element of $\lambda/\mu$.
      \item Cells $c \in \mu$ which are in the same row as some element of
        $\lambda/\mu$ and are strictly left of the leftmost cell in $\lambda/\mu$.
      \item Cells $c \in \mu$ which are in the same column as some element of
        $\lambda/\mu$ and are strictly below the bottommost cell of $\lambda/\mu$.
      \item Cells $c \in \lambda$ which are in both the same column and row as some
        element(s) of $\lambda/\mu$. Region (IV) includes the ribbon
        $\lambda/\mu$ itself.
    \end{enumerate}

    \begin{figure}[ht]
      \centering
      \includegraphics[scale=1]{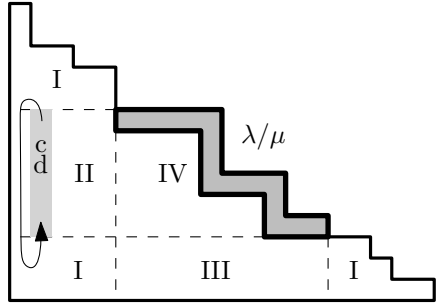}
      \caption{All regions of a partition $\lambda$ where $\lambda/\mu$ is a ribbon}
      \label{fig:ribbon_regions}
    \end{figure}

    \begin{figure}[ht]
      \centering
      \includegraphics[scale=1]{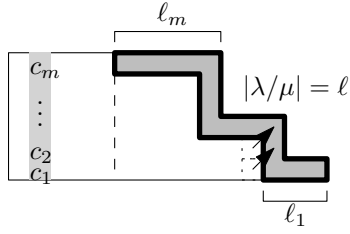}
      \caption{Regions (II) and (IV) up close}
      \label{fig:ribbon_regionII}
    \end{figure}

    We now describe how hook lengths change in each region, mod the ribbon length $\ell$,
    in going from $\mu$ to $\lambda$. They are unchanged in region (I). Regions (II) and (III) are
    similar, so we consider region (II). This region is a rectangle, which we imagine breaking up
    into columns. Write $h_c^\lambda$ or $h_c^\mu$ to denote the hook length of a cell
    $c \in \mu$ as an element of $\lambda$ or $\mu$, respectively. For $c$ in region (II), let
    $d$ denote the cell in region (II) immediately below $c$, with wrap-around. We claim
    $h_c^\lambda \equiv_\ell h_d^\mu$. Given the claim, hook lengths mod $\ell$ in regions
    (II) and (III) are simply permuted in going from $\mu$ to $\lambda$, so changes to the
    counts $\#\{c \in \mu : h_c^\mu \equiv_\ell \pm a\}$ arise only from region (IV).

    For the claim, let $c_1, c_2, \ldots, c_m$ be the cells of the column in region (II) containing
    $c$, listed from bottom to top; see Figure \ref{fig:ribbon_regionII}. Begin by comparing hook
    lengths at $c_1$ and $c_2$. Since $\lambda - \mu$ is a ribbon, the rightmost cell of
    $\mu$ in the same row as $c_1$ is directly left and below the rightmost cell of $\lambda$
    in the same row as $c_2$. It follows that $h_{c_1}^\mu = h_{c_2}^\lambda$.
    This procedure yields the claim except when $c = c_1$. In that case, $d = c_m$, and we 
    further claim $h_{c_1}^\lambda = h_{c_m}^\mu + \ell$, which will finish the argument. 
    Indeed, let $\ell_i$ denote the number of elements in $\lambda - \mu$ in the same row as
    $c_i$. Certainly $\ell = \ell_1 + \cdots + \ell_m$. Further,
    $h_{c_i}^\lambda = h_{c_i}^\mu + \ell_i$. Putting it all together, we have
    \begin{align*}
      h_{c_1}^\lambda
        &= h_{c_1}^\mu + \ell_1
          = h_{c_2}^\lambda + \ell_1 \\
        &= h_{c_2}^\mu + \ell_2 + \ell_1
          = \cdots \\
        &= h_{c_m}^\mu + \ell_m + \cdots + \ell_2 + \ell_1
          = h_{c_m}^\mu + \ell.
    \end{align*}

    We now turn to region (IV). It suffices to consider the case depicted in
    Figure \ref{fig:ribbon_regionIV}, where regions (I), (II), and (III) are empty. We define two
    more regions as follows; see Figure \ref{fig:ribbon_regionIV}.
    \begin{enumerate}[(A)]
      \item Cells $c \in \lambda$ in the first row or column.
      \item Cells $c \in \lambda$ not in the first row or column.
    \end{enumerate}

    \begin{figure}[ht]
      \centering
      \includegraphics[scale=1]{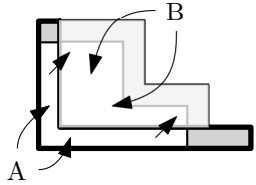}
      \caption{Regions (A) and (B) of a partition $\mu$ where $\lambda/\mu$ is a ribbon}
      \label{fig:ribbon_regionIV}
    \end{figure}
    \begin{figure}[ht]
      \centering
      \includegraphics[scale=1]{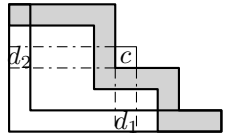}
      \caption{Adding a cell to region (B)}
      \label{fig:ribbon_regionIVb}
    \end{figure}

    Region (B) is precisely $\mu$ translated up and right one square. Moreover,
    this operation preserves hook lengths, so changes in the counts
    $\#\{c \in \mu : h_c^\mu \equiv_\ell \pm a\}$ arise entirely from region (A).
    We have thus reduced the lemma to the statement
    \begin{equation}
      \label{eq:region_claim}
      \#\{c \text{ in region (A) } : h_c^\lambda \equiv_\ell \pm a\}
        = \#\{a, -a \text{ (mod $\ell$)}\}.
    \end{equation}
    We prove \eqref{eq:region_claim} by induction on the size of region (B). In the base case, 
    region (B) is empty, so $\lambda$ is a hook, and the result is easy to see directly (for
    instance, negate the hook lengths in only the ``vertical leg'' to get entries of precisely
    $1, 2, \ldots, \ell$). For the inductive step, consider the effect of adding a cell $c$ to region
    (B). Now $c$ is in the same column as some cell $d_1$ in region (A) and $c$ is in the same
    row as some cell $d_2$ in region (A); see Figure \ref{fig:ribbon_regionIVb}. Say the original
    hook length of $d_1$ is $i$ and the original hook length of $d_2$ is $j$. It is easy to see
    that $i+j = \ell - 1$. Adding $c$ to region (B) increases the hook lengths $i$ and $j$ each
    by $1$, but $j+1 \equiv_\ell -i$ and $i+1 \equiv_\ell -j$, so the required counts remain as
    claimed in the inductive step. This completes the proof of the lemma and, hence,
    Theorem \ref{thm:hook}.
  \qed\end{proof}

We briefly contrast our approach with that of \cite{fl95}. Let $f^\lambda_\ell$ be the number
of ways to write $\lambda$ as successive ribbons each of length $\ell$. If $\lambda \vdash n =
\ell s$, by the Murnaghan-Nakayama rule $\chi^\lambda(\ell^s)$ is a signed sum over terms
counted by $f^\lambda_\ell$. While there is typically cancellation in this sum, there is in fact
none for rectangular cycle types \cite[2.7.26]{jk81}, i.e.~$\chi^\lambda(\ell^s) =
\pm f^\lambda_\ell$. Indeed, \cite{fl95} proved Theorem \ref{thm:hook} using standard rim
hook tableaux instead of character evaluations, though virtually every application of their result 
uses the character-theoretic inequality in Theorem \ref{thm:fl_bound}.

The sign of $\chi^\lambda(\ell^s)$ can be computed in terms of \textit{abaci} as in
\cite[2.7.23]{jk81}. The sign may also be computed ``greedily'' by repeatedly removing
$\ell$-rim hooks from $\lambda$ in any order whatsoever, which is a consequence of
(among other things) the following corollary of Lemma \ref{lem:fiberdiffs} and Theorem
\ref{thm:hook}. We have been unable to find part (iv) in the literature, though for the rest see
\cite[2.5-2.7]{fl95} and their references.

\begin{corollary}\label{cor:ell_decomposable}
  Let $\lambda \vdash n = \ell s$. The following are equivalent:
  \begin{enumerate}[(i)]
    \item $\chi^\lambda(\ell^s) \neq 0$;
    \item $\lambda$ can be written as successive length $\ell$ ribbons, i.e.~the $\ell$-core
      of $\lambda$ is empty;
    \item we have
      \[ \#\{c \in \lambda : h_c \equiv_\ell 0\} = s; \]
    \item for any $a \in \bZ$,
      \[ \#\{c \in \lambda : h_c \equiv_{\ell} \pm a\}
          = s \cdot \#\{a, -a \text{ (mod $\ell$)}\}. \]
  \end{enumerate}
\end{corollary}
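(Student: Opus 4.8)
The plan is to prove Corollary \ref{cor:ell_decomposable} by establishing the cycle of implications (iv) $\Rightarrow$ (iii) $\Rightarrow$ (ii) $\Rightarrow$ (i) $\Rightarrow$ (ii) and (ii) $\Rightarrow$ (iv), leaning heavily on the Murnaghan--Nakayama rule and on Lemma \ref{lem:fiberdiffs}, which is the real engine here.

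First I would dispatch the classical equivalences. The implication (ii) $\Rightarrow$ (i) is exactly the content of Theorem \ref{thm:hook}: if the $\ell$-core is empty then $|\chi^\lambda(\ell^s)|$ equals a ratio of nonzero integers, hence is nonzero. Conversely (i) $\Rightarrow$ (ii) is the Murnaghan--Nakayama rule: if $\lambda$ cannot be built from $s$ successive $\ell$-ribbons, every term in the Murnaghan--Nakayama expansion of $\chi^\lambda(\ell^s)$ vanishes, so $\chi^\lambda(\ell^s) = 0$. This gives (i) $\Leftrightarrow$ (ii) with essentially no new work.

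Next I would handle (ii) $\Rightarrow$ (iv) by a straightforward induction on $s$, feeding Lemma \ref{lem:fiberdiffs} the appropriate ribbon at each step. The base case $s = 0$ (empty $\lambda$) is trivial, both sides being $0$. For the inductive step, write $\lambda$ as $\mu$ with a final $\ell$-ribbon $\lambda/\mu$ attached, so $\mu$ is built from $s-1$ successive $\ell$-ribbons; the inductive hypothesis gives $\#\{c \in \mu : h_c \equiv_\ell \pm a\} = (s-1)\cdot\#\{a, -a \text{ (mod $\ell$)}\}$, and Lemma \ref{lem:fiberdiffs} adds exactly $\#\{a, -a \text{ (mod $\ell$)}\}$ more, yielding the claim for $\lambda$. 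Then (iv) $\Rightarrow$ (iii) is immediate by specializing to $a \equiv_\ell 0$, where $\#\{0, 0 \text{ (mod $\ell$)}\} = 1$. The only remaining link is (iii) $\Rightarrow$ (ii): here I would argue contrapositively. If $\lambda$ is not $\ell$-decomposable, remove $\ell$-ribbons greedily until reaching the (nonempty) $\ell$-core $\kappa \vdash n - \ell t$ with $t < s$; each removal, by Lemma \ref{lem:fiberdiffs} applied with $a \equiv_\ell 0$, drops the count $\#\{c : h_c \equiv_\ell 0\}$ by exactly $1$, so that count for $\lambda$ is $t + \#\{c \in \kappa : h_c \equiv_\ell 0\}$; since an $\ell$-core has no cell with hook length divisible by $\ell$, this equals $t \neq s$, contradicting (iii). (Alternatively one could cite that the number of cells of $\lambda$ with hook length divisible by $\ell$ always equals $\ell$ times the number of $\ell$-ribbons in the $\ell$-quotient, a standard abacus fact, but the greedy-removal argument via Lemma \ref{lem:fiberdiffs} is self-contained.)

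The main obstacle, such as it is, is the (iii) $\Rightarrow$ (ii) direction: one must know that $\ell$-ribbon removal is confluent, i.e.\ that greedily removing $\ell$-ribbons in any order terminates at the same $\ell$-core, and that an $\ell$-core genuinely has no hook length divisible by $\ell$. Both are classical (see \cite[2.7]{jk81}), and in fact the fiber-count stability furnished by Lemma \ref{lem:fiberdiffs} already encodes the invariance needed, so I would simply invoke these facts. Everything else is bookkeeping on top of results proved earlier in the paper.
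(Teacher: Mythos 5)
Your argument is correct, and the implications (i) $\Leftrightarrow$ (ii), (ii) $\Rightarrow$ (iv), (iv) $\Rightarrow$ (iii) are handled essentially as in the paper: Theorem~\ref{thm:hook} plus Murnaghan--Nakayama for the first, the iterated form of Lemma~\ref{lem:fiberdiffs} (which is exactly Lemma~\ref{lem:fiberpairs}) for the second, and specializing $a \equiv_\ell 0$ for the third. Where you diverge is in closing the cycle. You argue (iii) $\Rightarrow$ (ii) by contrapositive: strip $\ell$-ribbons greedily down to the $\ell$-core $\kappa$, use Lemma~\ref{lem:fiberdiffs} to track the count $\#\{c : h_c \equiv_\ell 0\}$ down by one per removal, and then invoke the classical fact that an $\ell$-core has no cell whose hook length is divisible by $\ell$. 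The paper instead proves (iii) $\Leftrightarrow$ (i) directly from the $q$-hook length formula \eqref{eq:hook_q}: since the left side is a polynomial in $q$, the order of vanishing of the numerator at $q = \omega_n^s$ (which is $s$) is automatically at most the order of vanishing of the denominator (which is $\#\{c : h_c \equiv_\ell 0\}$), and the limiting ratio — namely $|\chi^\lambda(\ell^s)|$ — is nonzero exactly when these orders agree.

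The trade-off is worth noting. Your route is arguably more conceptual (it says what the excess count $\#\{c : h_c \equiv_\ell 0\} - s$ \emph{is}: the number of hook-length-divisible-by-$\ell$ cells in the core, which vanishes iff the core is empty), but it leans on two pieces of abacus theory not established in the paper: well-definedness of the $\ell$-core under greedy removal, and the nontrivial statement that an $\ell$-core has \emph{no} hook length divisible by $\ell$. That second fact is genuinely deeper than Lemma~\ref{lem:fiberdiffs} alone, which only governs how counts change under ribbon addition/removal and says nothing about a partition with no $\ell$-rim hook; your parenthetical suggestion that the lemma ``already encodes the invariance needed'' overstates what it gives you. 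The paper's order-of-vanishing argument sidesteps all of this and is self-contained given the tools already assembled in Sections~\ref{sec:background} and~\ref{sec:hook}, at the modest cost of being slightly less transparent about where the surplus cells live. Both are valid; if you want to keep your route, you should cite \cite[2.7]{jk81} explicitly for the $\ell$-core hook-length fact rather than present the greedy argument as self-contained.
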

  \begin{proof}
    (i) and (ii) are equivalent by Theorem \ref{thm:hook}. (ii) implies (iv) by Lemma
    \ref{lem:fiberpairs} and (iv) implies (iii) trivially. Finally, (iii) is equivalent to (i) as follows. The
    expression \eqref{eq:hook_q} is a polynomial, so the order of vanishing at $q \to
    \omega_n^s$ of the numerator, namely $s$, is at most as large as the order of vanishing of
    the denominator, namely $\#\{c \in \lambda : h_c \equiv_\ell 0\}$. The limiting ratio is
    non-zero if and only if these counts agree, so (iii) is equivalent to (i).
  \qed\end{proof}

While Corollary \ref{cor:ell_decomposable}
gives equivalent conditions for $\chi^\lambda(\ell^s) \neq 0$, \cite[Corollary 7.5]{MR768993}
gives interesting and different necessary conditions for $\chi^\lambda(\nu) \neq 0$ for general
shapes $\nu$.

\section{Unimodality and $\chi^\lambda(\mu)$}
\label{sec:unimodality}

We end with a brief discussion of inequalities related to symmetric group characters.
In applying Proposition \ref{prop:h_op1}, we essentially replaced
$\frac{n!}{\prod_{c \in \lambda} h_c}$ with $\frac{n!}{\prod_{c \in \lambda} h_c^{\op}}$,
since the latter is order-reversing with respect to the diagonal preorder by
\eqref{eq:diag_preorder}. Moreover, it is relatively straightforward to mutate partitions and
predictably increase or decrease them in the diagonal preorder, as in the proof of
Proposition \ref{prop:h_op}. It would be desirable to instead work directly with symmetric group
characters themselves and appeal to general results about how $|\chi^\lambda(\mu)|$
increases or decreases as $\lambda$ is mutated and $\mu$ is held fixed, though we have
found very few concrete and no conjectural results in this direction. Any progress seems both
highly non-trivial and potentially useful, so in this section we record some initial observations.

We have $\chi^{(a+1, 1^b)}(1^n) = \binom{n-1}{a}$ for $a+b+1=n$, so these values are
unimodal in $a$. Using Theorem \ref{thm:hook} shows more generally
that for all $\ell \mid n$,
  \[ |\chi^{(a+1, 1^b)}(\ell^{n/\ell})|
       = \binom{\frac{n}{\ell} - 1}
                     {\left\lfloor \frac{a}{\ell}\right\rfloor} \]
which is again unimodal in $a$. However, $|\chi^\lambda(\ell^s)|$ does not seem to respect 
changes in $\lambda$ under dominance order in general in any suitable sense. On the other 
hand, if we allow the cycle type $\mu$ to vary and consider the Kostka numbers
$K_{\lambda\mu}$ as a surrogate for $|\chi^\lambda(\mu)|$ (since $K_{\lambda(1^n)} =
\chi^\lambda(1^n)$), we have a series of well-known and very general inequalities. We write
$K_{\lambda\mu}(t)$ for the Kostka-Foulkes polynomial and $\nu \geq \mu$ for dominance
order. We have:

\begin{theorem}[{\cite{snapper71}, \cite{lv73}, \cite{lam77}; \cite{gp92}}]
  $K_{\lambda\nu} \leq K_{\lambda\mu}$ for all $\lambda$ if and only if
  $\nu \geq \mu$. Indeed, $\nu \geq \mu$ implies $K_{\lambda\nu}(t) \leq
  K_{\lambda\mu}(t)$ (coefficient-wise) for all $\lambda$.
\end{theorem}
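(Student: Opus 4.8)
The plan is to establish the equivalence in two halves: the implication ``$\nu \trianglerighteq \mu \Rightarrow K_{\lambda\nu} \leq K_{\lambda\mu}$ for all $\lambda$'' from a short symmetric-function computation, and the converse from the support of the Kostka numbers; the graded refinement then follows the same outline with Hall--Littlewood functions in place of $h_\mu$, and that substitution is where the real work lies.

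\emph{Forward direction.} Assume $\nu \trianglerighteq \mu$. Since $K_{\lambda\mu} = \langle s_\lambda, h_\mu\rangle$, where $h_\mu = \prod_i h_{\mu_i}$, it suffices to prove that $h_\mu - h_\nu$ is Schur-positive (a nonnegative integer combination of Schur functions); then $K_{\lambda\mu} - K_{\lambda\nu} = \langle s_\lambda, h_\mu - h_\nu\rangle \geq 0$ for every $\lambda$. As this is a statement about the multisets of parts only, I would use the classical fact (going back to Brylawski) that $\mu \trianglelefteq \nu$ admits a chain of partitions $\nu = \nu^{(0)} \rhd \nu^{(1)} \rhd \cdots \rhd \nu^{(k)} = \mu$ in which each $\nu^{(t+1)}$ arises from $\nu^{(t)}$ by decreasing one part, say $\nu^{(t)}_i = a$, by $1$ and increasing a later part $\nu^{(t)}_j = c$ by $1$, the result still being a partition; the partition requirement forces $a-1 \geq c+1$. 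The only computation needed is the Pieri expansion $h_\alpha h_\beta = \sum_{p=0}^{\beta} s_{(\alpha+p,\,\beta-p)}$, valid for $\alpha \geq \beta \geq 0$, which upon subtracting the corresponding expansion of $h_{\alpha+1}h_{\beta-1}$ telescopes to
\[
  h_\alpha h_\beta - h_{\alpha+1} h_{\beta-1} = s_{(\alpha,\beta)} \qquad (\alpha \geq \beta \geq 1).
\]
Applying this with $\alpha = a-1$, $\beta = c+1$ and multiplying through by the product $h_\rho$ of the parts left unchanged at step $t$ gives $h_{\nu^{(t+1)}} - h_{\nu^{(t)}} = h_\rho \cdot s_{(a-1,\,c+1)}$, which is Schur-positive because a product of Schur-positive symmetric functions is Schur-positive (Littlewood--Richardson). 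Summing the telescoping differences shows $h_\mu - h_\nu$ is Schur-positive, as desired.

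\emph{Converse.} Suppose $\nu \not\trianglerighteq \mu$. Take $\lambda = \nu$. Then $K_{\nu\nu} = 1$, realized by the unique semistandard tableau with row $i$ constant equal to $i$, whereas $K_{\nu\mu} = 0$ since a semistandard tableau of shape $\nu$ and content $\mu$ exists if and only if $\nu \trianglerighteq \mu$. Hence $K_{\nu\nu} = 1 > 0 = K_{\nu\mu}$, so the inequality ``$K_{\lambda\nu} \leq K_{\lambda\mu}$ for all $\lambda$'' already fails at $\lambda = \nu$. Together with the forward direction this proves the equivalence of the two ungraded statements.

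\emph{Graded refinement and the main obstacle.} To upgrade to $K_{\lambda\nu}(t) \leq K_{\lambda\mu}(t)$ coefficientwise when $\nu \trianglerighteq \mu$, the plan is to repeat the telescoping argument with the modified Hall--Littlewood functions $Q'_\mu(x;t) = \sum_\lambda K_{\lambda\mu}(t)\, s_\lambda$ (so that $h_\mu = Q'_\mu(x;1)$) in place of $h_\mu$: one wants $Q'_{\nu^{(t+1)}} - Q'_{\nu^{(t)}}$ to be a nonnegative combination of Schur functions with coefficients in $\bN[t]$ at each elementary step, and the same telescoping then yields the coefficientwise inequality. The difficulty — and the main obstacle — is that $Q'_\mu$ is not multiplicative, so the clean ``factor out $h_\rho$'' step above has no naive analogue; the requisite positivity is precisely the content of the cited refinement (equivalently, it can be extracted from a charge-preserving injection of the relevant sets of semistandard tableaux), and I would invoke it rather than reprove it.
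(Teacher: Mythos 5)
The paper cites this theorem from the literature and gives no proof of its own, so there is no in-paper argument to compare against; the first sentence is the classical Snapper--Liebler--Vitale--Lam result and the coefficientwise refinement is the content of the last reference.

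Your proof of the ungraded biconditional is correct. The forward direction combines the two-row Jacobi--Trudi identity $h_\alpha h_\beta - h_{\alpha+1}h_{\beta-1} = s_{(\alpha,\beta)}$ for $\alpha \geq \beta \geq 1$, the Brylawski chain of single-box moves from $\nu$ down to $\mu$, and Littlewood--Richardson positivity to justify multiplying through by $h_\rho$. The one point that needs care is that a nontrivial move taking a part $a$ (at position $i$) to a later part $c$ (at position $j$) satisfies $a - 1 \geq c + 1$, so the identity is applicable; you argue this correctly (indeed $a-1 \geq \nu_{i+1} \geq \cdots \geq \nu_{j-1} \geq c+1$ once the new sequence is required to be weakly decreasing, and equality collapse only produces a trivial move). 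The converse via $\lambda = \nu$ and the support criterion $K_{\nu\mu} \neq 0 \Leftrightarrow \nu \geq \mu$ is the standard counterexample and is fine.

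For the coefficientwise refinement, however, there is a genuine gap: you identify the obstruction (the telescoping breaks because $Q'_\mu$ is not multiplicative) and then propose to ``invoke'' the needed positivity. As a proof of the stated theorem this is circular, since that positivity \emph{is} the graded claim being proved. Closing the gap requires real additional input beyond the $t=1$ argument --- for instance the Lascoux--Sch\"utzenberger description $K_{\lambda\mu}(t) = \sum_T t^{\mathrm{charge}(T)}$ over SSYT of shape $\lambda$ and content $\mu$, together with a charge- (or cocharge-)preserving injection of tableaux realizing each Brylawski step. That construction is substantially more delicate than anything in your ungraded argument, and until it is supplied the second sentence of the theorem remains unproved.
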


\begin{question}
  Are there any ``nice'' infinite families besides hooks and rectangles for which
  $|\chi^\lambda(\mu)|$ is monotonic, unimodal, or suitably order-preserving as $\lambda$
  varies? What about as $\mu$ varies?
\end{question}

\section{Acknowledgements}
  The author would like to thank Sheila Sundaram for sharing a preprint of \cite{sundaram16}
  which motivated the present work. He would also like to thank his advisor, Sara Billey, for her
  support, insightful comments, and a careful reading of the manuscript; his partner, R.~Andrew
  Ohana, for numerous fruitful discussions and support, including an early observation which lead
  to Lemma \ref{lem:fiberpairs}; and Connor Ahlbach for valuable discussions on related work.

\bibliographystyle{alpha}
\bibliography{refs}{}


\end{document}